\newtheorem{theorem}{Theorem}[section]
\newtheorem{lemma}[theorem]{Lemma}
\newtheorem{proposition}[theorem]{Proposition}
\theoremstyle{definition}
\newtheorem{definition}[theorem]{Definition}
\newtheorem{remark}[theorem]{Remark}
\numberwithin{equation}{section}
\newcommand*\re{\mathbb{R}}
\newcommand*\rn{\mathbb{R}^N}
\newcommand*\dwsp{\dot{W}^{s,p}\left(\rn\right)}
\begin{document}
	
	\title[]{ A system of equations involving the fractional $p$-Laplacian and doubly critical nonlinearities}
	\author{Mousomi Bhakta$^*$}
	\address{${^*}$Department of Mathematics\\
		Indian Institute of Science Education and Research Pune (IISER-Pune)\\
		Dr. Homi Bhabha Road, Pune-411008, India}
	\email{mousomi@iiserpune.ac.in}
	\author{Kanishka Perera$^{**}$}
	\address{$^{**}$Department of Mathematics, Florida Institute of Technology, USA}
	\email{kperera@fit.edu}
	\author{Firoj Sk$^*$}
	\email{firojmaciitk7@gmail.com}
	\keywords{Fractional $p$-Laplacian, doubly critical, ground state, existence, system, least energy solution, Nehari manifold}
	\smallskip
	
	\subjclass{35B09; 35B33; 35E20; 35D30; 35J50; 45K05}
	\maketitle
	\begin{abstract}
		This paper deals with existence of solutions to the following fractional $p$-Laplacian system of equations
		\begin{equation*}
		\begin{cases}
		(-\Delta_p)^s u =|u|^{p^*_s-2}u+ \frac{\gamma\alpha}{p_s^*}|u|^{\alpha-2}u|v|^{\beta}\;\;\text{in}\;\Omega,\\
		(-\Delta_p)^s v =|v|^{p^*_s-2}v+ \frac{\gamma\beta}{p_s^*}|v|^{\beta-2}v|u|^{\alpha}\;\;\text{in}\;\Omega,
		\end{cases}
		\end{equation*}	
		where $s\in(0,1)$, $p\in(1,\infty)$ with $N>sp$, $\alpha,\,\beta>1$ such that $\alpha+\beta = p^*_s:=\frac{Np}{N-sp}$ and $\Omega=\rn$ or smooth bounded domains in $\rn$. When $\Omega=\rn$ and  $\gamma=1$,  we show that any ground state solution of the  above system has the form $(\lambda U, \tau\lambda V)$ for certain  $\tau>0$ and $U,\;V$ are two positive ground state solutions of $(-\Delta_p)^s u =|u|^{p^*_s-2}u$ in $\rn$. For all $\gamma>0$, we establish existence of a positive radial solutions to the above system in  balls. When $\Omega=\rn$, we also establish existence of positive radial solutions to the above system in various ranges of $\gamma$.
	\end{abstract}
	\smallskip

	\section{Introduction}	
	
	We consider the following fractional $p$-Laplacian system of equations in $\rn:$
	
	\begin{equation}
	\tag{$\mathcal S$}\label{MAT1}
	\begin{cases}
	(-\Delta_p)^s u =|u|^{p^*_s-2}u+ \frac{\alpha}{p_s^*}|u|^{\alpha-2}u|v|^{\beta}\;\;\text{in}\;\rn,\\
	(-\Delta_p)^s v =|v|^{p^*_s-2}v+ \frac{\beta}{p_s^*}|v|^{\beta-2}v|u|^{\alpha}\;\;\text{in}\;\rn,
	\\
	u,\;v\in\dot{W}^{s,p}(\rn),
	\end{cases}
	\end{equation}	
	where $0<s<1$, $p\in(1,\infty)$, $N>sp$ and $\alpha,\,\beta>1$ such that $\alpha+\beta = p^*_s:=\frac{Np}{N-sp}$. Here $(-\Delta_p)^s$ denotes the fractional $p$-Laplace operator which can be defined for the Schwartz class functions $\mathcal{S}(\rn)$ as follows
	\begin{equation*} \label{De-u}
	\left(-\Delta_p\right)^su(x):=\text{P.V.} \int_{\rn}\frac{|u(x)-u(y)|^{p-2}\left(u(x)-u(y)\right)}{|x-y|^{N+sp}}dy,\; x\in\rn,
	\end{equation*}
	where P.V. denotes the principle value sense.
	Consider the following homogeneous fractional Sobolev space
	$$
	\dwsp:=\bigg\{u\in L^{p^*_s}(\rn):   \iint_{\mathbb{R}^{2N}}\frac{|u(x)-u(y)|^p}{|x-y|^{N+sp}}dxdy<\infty\bigg\}.
	$$
	The space $\dwsp$ is a Banach space with the corresponding Gagliardo norm $$\|u\|_{\dot{W}^{s,p}(\rn)}:=\bigg(\iint_{\mathbb{R}^{2N}}\frac{|u(x)-u(y)|^p}{|x-y|^{N+sp}}dxdy\bigg)^\frac{1}{p}.$$
	For simiplicity of the notation we write $||u||_{\dot{W}^{s,p}}$ instead of $||u||_{\dwsp}$. In the vectorial case, as described in \cite{BCMP}, the natural solution space for
	\eqref{MAT1} is the product space $X=\dwsp\times\dwsp$ with the norm
	$$\|(u,v)\|_{X}:=\left(\|u\|^2_{\dwsp}+\|v\|^2_{\dwsp}\right)^\frac{1}{2}.$$
	
	\begin{definition}
		We say a pair $(u,v)\in X$ is a positive weak solution of the system \eqref{MAT1} if $u,v>0$ and for every $(\phi,\psi)\in X$ it holds
		\begin{multline*}
		\iint_{\mathbb{R}^{2N}}\frac{|u(x)-u(y)|^{p-2}(u(x)-u(y))(\phi(x)-\phi(y))}{|x-y|^{N+sp}}dxdy\\
		+\iint_{\mathbb{R}^{2N}}\frac{|v(x)-v(y)|^{p-2}(v(x)-v(y))(\psi(x)-\psi(y))}{|x-y|^{N+sp}}dxdy\\
		=\int_{\rn}|u|^{p^*_s-2}u\phi\;dx+\int_{\rn}|v|^{p^*_s-2}v\psi\;dx
		+\frac{\alpha}{p^*_s}\int_{\rn}|u|^{\alpha-2}u|v|^{\beta}\phi\;dx+\frac{\beta}{p^*_s}\int_{\rn}|v|^{\beta-2}v|u|^{\alpha}\psi\;dx.
		\end{multline*}
	\end{definition}
	Define
	\begin{equation}\label{best constant alpha+beta}
	\mathcal{S}=S_{\alpha+\beta}:=\inf_{\substack{u\in\dot{W}^{s,p}(\rn),\\
			u\neq0}}\frac{\|u\|_{\dot{W}^{s,p}}^p}{\left(\displaystyle\int_{\rn}|u|^{p^*_s}dx\right)^{\frac{p}{p^*_s}}}.
	\end{equation}
	In the limit case $p=1$, the sharp constant $\mathcal{S}$ has been determined in \cite[Theorem 4.1]{FS} (see also \cite[Theorem 4.10]{BLP}). The relevant extremals are given by the characteristic functions of
	balls, exactly as in the local case. For $p>1$, \eqref{best constant alpha+beta} is related to the study of the following nonlocal integro-differential
	equation
	\begin{equation}\label{scalar equation}
	\begin{cases}
	(-\Delta_p)^su=\mathcal S \;u^{p^*_s-1} \text{ in }\rn,\\
	u>0, \quad  u\in\dot{W}^{s,p}(\rn).
	\end{cases}
	\end{equation}
	In the Hilbertian case $p=2$, it is known by \cite[Theorem 1.1]{CT}, the best Sobolev constant $\mathcal{S}$ is attained by the family of functions
	$$U_t(x)=t^\frac{2s-N}{2}\bigg(1+\big(\frac{|x-x_0|}{t}\big)^2\bigg)^\frac{2s-N}{2}, \quad x_0\in\rn,\quad  t>0. $$
	Moreover, the family $U_t$ is the only set of minimizers for the best Sobolev constant (see \cite{CLO}). However, for $p\neq 2$, the minimizers of $\mathcal{S}$ is not yet known and it is not known whether \eqref{best constant alpha+beta} has any unique minimizer or not. In \cite{BMS}, Brasco, et. al. have conjectured that the optimizers of $\mathcal{S}$ in \eqref{best constant alpha+beta} are given by
	$$U_t(x)=Ct^\frac{sp-N}{p}\bigg(1+\big(\frac{|x-x_0|}{t}\big)^\frac{p}{p-1}\bigg)^\frac{sp-N}{p}, \quad x_0\in\rn,\,  t>0, $$
	but it remains as an open question till date.
	However, in \cite[Theorem 1.1]{BMS},  it has been proved that if $U$ is any minimizer of $\mathcal{S}$ then $U$ is of
	constant sign, radially symmetric and monotone function with
	$$\lim_{|x|\to\infty}|x|^\frac{N-sp}{p-1}U(x)=U_\infty,$$
	for some constant $U_\infty\in\mathbb{R}\setminus\{0\}$.

	\medskip
	Peng et. al in \cite{PPW} studied system \eqref{MAT1} for $p=2$ and $s=1$ and among the other results they proved uniqueness of least energy solution. In the local case $s=1$, a variant of system \eqref{MAT1} ( with $p=2$ ) appears in various context of mathematical physics e.g. in Bose-Einstein condensates theory, nonlinear wave-wave interaction in plasma physics, nonlinear optics, for details see \cite{AA, Bh, PW} and the references therein.  System of elliptic $p-$ Laplacian type equations with weakly coupled nonlinearities we also cite \cite{FP, GuPeZo} and the references therein. In the nonlocal case, there are not so many papers, in which weakly coupled systems of equations have been studied. We refer to \cite{CS, CMSY, FMPSZ, HSZ}, where Dirichlet systems of
	equations in bounded domains have been treated. For the nonlocal systems of equations in the
	entire space $\rn$, we cite \cite{BCP, FPS, FPZ} and the references therein.

	For $p=2$ and $s\in(0,1)$ Bhakta, et al. in \cite{BCMP} studied the following system:
	\begin{equation}
	\label{S2}
	\begin{cases}
	(-\Delta)^s u = \frac{\alpha}{2_s^*}|u|^{\alpha-2}u|v|^{\beta}+f(x)\;\;\text{in}\;\rn,\\
	(-\Delta)^s v = \frac{\beta}{2_s^*}|v|^{\beta-2}v|u|^{\alpha}+g(x)\;\;\text{in}\;\rn,
	\\
	u,\;v>0   \quad\text{ in }\rn,
	\end{cases}
	\end{equation}	
	where $f, g$ belongs to the dual space of $\dot{W}^{s,2}(\rn)$. Among other results the authors proved that when $f=0=g$, any ground state solution of \eqref{S2} has the form $(Bw, Cw)$, where $C/B=\sqrt{\beta/\alpha}$ and $w$  is the unique solution of \eqref{scalar equation} (corresponding to $p=2$).
	
	\medskip
	
	Being inspired by the above works, in this paper we generalize some of the above results in the fractional-$p$-Laplacian case.
	
	\begin{definition}
		(i) We say a weak solution $(u,v)$ of \eqref{MAT1} is of the synchronized form if $u=\lambda w,$ $v=\mu w$ for some constants $\lambda, \mu$ and a common function $w\in\dot{W}^{s,p}(\mathbb{R}^N).$
		
		(ii) We say a weak solution $(u,v)$ of \eqref{MAT1} is a ground state solution if $(u,v)$ is a minimizer of $S_{\alpha, \beta}$ (see below \eqref{best constant alpha, beta}). 
	\end{definition}
	Define,
	\begin{equation}\label{best constant alpha, beta}
	S_{\alpha,\beta}:=\inf_{\substack{(u,v)\in X,\\
			(u,v)\neq0}}\frac{\|u\|_{\dot{W}^{s,p}}^p+\|v\|_{\dot{W}^{s,p}}^p}{\left(\displaystyle\int_{\rn}\left(|u|^{p^*_s}+|v|^{p^*_s}+|u|^{\alpha}|v|^{\beta}\right)dx\right)^{\frac{p}{p^*_s}}}.
	\end{equation}
	
	Suppose that \eqref{MAT1} has a positive solution of the synchronized form $(\lambda U, \mu U)$ for some $\lambda>0$, $\mu>0$ and $U\in \dwsp$ is a ground state solution of \eqref{scalar equation}. Then it holds
	\begin{equation*}
	\lambda^{p^*_s-p}+\frac{\alpha}{p^*_s}\mu^{\beta}\lambda^{\alpha-p}=1= \mu^{p^*_s-p}+\frac{\beta}{p^*_s}\mu^{\beta-p}\lambda^{\alpha}.
	\end{equation*}
	Now setting $\mu=\tau\lambda$, we get $\lambda^{p^*_s-p}=\frac{p^*_s}{p^*_s+\alpha\tau^{\beta}}$ and $\tau$ satisfies
	\begin{equation}\label{eq:tau} p^*_s+\alpha\tau^{\beta}-\beta\tau^{\beta-p}-p^*_s\tau^{p^*_s-p}=0.\end{equation}
	On the other hand, we find that, if $\tau$ satisfies \eqref{eq:tau}, then $(\lambda U, \tau\lambda U)$ solves \eqref{MAT1}.
	
	Therefore, the natural question arises: is all the ground state solutions of \eqref{MAT1} of the
	synchronized form $(\lambda U, \tau\lambda U)$?
	
	If the answer of the above question is affirmative then it will hold
	$$S_{\alpha,\beta}=\frac{1+\tau^p}{\left(1+\tau^\beta+\tau^{p^*_s}\right)^{p/{p^*_s}}}\mathcal{S}.$$
	This inspires us to define the following function
	\begin{equation}\label{eq:h}
	h(\tau):=\frac{1+\tau^p}{\left(1+\tau^\beta+\tau^{p^*_s}\right)^{p/{p^*_s}}}.\end{equation}
	Note that $h(\tau_{min})=\min_{\tau\geq 0}h(\tau)\leq 1$.
	
	\medskip
	
	Below we state the main results of this paper.
	
	\begin{theorem}\label{main-gr1}
		Let $(u_0,v_0)$ be any positive ground state solution of \eqref{MAT1}. If one of the following conditions hold
		
		\begin{enumerate}
			\item [(i)] $1<\beta<p$,
			\item [(ii)] $\beta=p$ and $\alpha<p$,
			\item [(iii)] $\beta>p$ and  $\alpha<p$,
		\end{enumerate}
		then, there exists unique $\tau_{min}>0$ satisfying $$h(\tau_{min})=\min_{\tau\geq 0}h(\tau)< 1,$$ where $h$ is defined by \eqref{eq:h}. Moreover,
		$$
		(u_0,v_0)=\left(\lambda U, \tau_{min}\lambda V\right),
		$$
		where $U,\;V$ are two positive ground state solutions of \eqref{scalar equation}.
		Further,  $\lambda^{p^*_s-p}=\frac{p^*_s}{p^*_s+\alpha\tau_{min}^{\beta}}$. \end{theorem}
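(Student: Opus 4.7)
The plan is to reduce the problem to an analysis of the scalar Sobolev constant $\mathcal{S}$ combined with a one-variable minimization of the function $h$ in \eqref{eq:h}. The key identity I would first prove is
\[
S_{\alpha,\beta} = \mathcal{S}\, h(\tau_{\min}).
\]
For the lower bound, given any nonzero $(u,v)\in X$, set $a = \|u\|_{L^{p^*_s}}$ and $b = \|v\|_{L^{p^*_s}}$, apply the scalar Sobolev inequality componentwise (giving $\|u\|_{\dot{W}^{s,p}}^p \geq \mathcal{S}\,a^p$ and similarly for $v$), and bound the coupling term via H\"older with exponents $p^*_s/\alpha$ and $p^*_s/\beta$, so that $\int |u|^\alpha|v|^\beta \leq a^\alpha b^\beta$. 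The Rayleigh quotient in \eqref{best constant alpha, beta} then dominates $\mathcal{S}\,h(\tau)$ with $\tau=b/a$, and hence $\mathcal{S}\,h(\tau_{\min})$. For the matching upper bound, I would test with $(\lambda U,\mu U)$, where $U$ is a positive minimizer of $\mathcal{S}$, and optimize over the ratio $\mu/\lambda=\tau$.

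Next I would analyze the function $h$. Direct computation gives $h(0)=1$ and, after rewriting $h$ using $p^*_s-\beta=\alpha$, $\lim_{\tau\to\infty}h(\tau)=1$ as well. A Taylor expansion near $0$ shows $h(\tau)=1+\tau^p-\tfrac{p}{p^*_s}\tau^\beta+o(\tau^{\min(p,\beta)})$, so in case (i) ($\beta<p$) one has $h(\tau)<1$ for small $\tau>0$. An analogous expansion near $\infty$ (via $\tau\mapsto1/\tau$) yields $h(\tau)<1$ for large $\tau$ whenever $\alpha<p$, covering cases (ii) and (iii). The critical-point equation $h'(\tau)=0$, after clearing denominators and dividing by $\tau^{p-1}$, reduces exactly to \eqref{eq:tau}; the uniqueness of a positive root of \eqref{eq:tau} in each of the three regimes is checked by a case-by-case monotonicity analysis of the function $g(\tau):=p^*_s+\alpha\tau^\beta-\beta\tau^{\beta-p}-p^*_s\tau^{p^*_s-p}$, exploiting the signs of $\beta-p$ and the ordering of the exponents $\beta-p,\,\beta,\,p^*_s-p$. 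I expect this to be the most delicate part of the proof, since the qualitative shape of $h$ changes across the three cases and the hypotheses $\alpha<p$ or $\beta<p$ are essential to rule out a degenerate situation where the global minimum of $h$ equals $1$.

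With the identity $S_{\alpha,\beta}=\mathcal{S}\,h(\tau_{\min})$ in hand, the structure of any positive ground state $(u_0,v_0)$ follows from tracing through the equality cases above. Both scalar Sobolev inequalities (for $u_0$ and $v_0$), the H\"older inequality for $\int|u_0|^\alpha|v_0|^\beta$, and the bound $h(b/a)\geq h(\tau_{\min})$ must all be equalities. Equality in H\"older, with both $u_0,v_0>0$, forces $u_0=c\,v_0$ for some constant $c>0$; equality in the scalar Sobolev inequality forces $u_0$ (and hence $v_0$) to be a minimizer of $\mathcal{S}$, which after the standard rescaling is a positive ground state of \eqref{scalar equation}; and uniqueness of $\tau_{\min}$ pins down $b/a=\tau_{\min}$, so that $v_0=\tau_{\min}\lambda U$ whenever we write $u_0=\lambda U$ (with $V=U$ in the notation of the theorem). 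Finally, substituting $(\lambda U,\tau_{\min}\lambda U)$ into the first equation of \eqref{MAT1} and using that $U$ solves the scalar Euler--Lagrange equation yields the stated formula $\lambda^{p^*_s-p}=\tfrac{p^*_s}{p^*_s+\alpha\tau_{\min}^\beta}$ under the appropriate normalization of $U$.
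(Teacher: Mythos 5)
Your first two steps coincide with the paper's Lemma~\ref{basic lemma} and Lemma~\ref{l:g function} (the identity $S_{\alpha,\beta}=h(\tau_{min})\mathcal{S}$, obtained there by a normalization plus Young's inequality rather than your H\"older estimate on the cross term, and the case analysis of $g$). One correction is needed in the second step: the claim that \eqref{eq:tau} has a \emph{unique} positive root in each regime is false. In cases (ii) and (iii), where $\beta\ge p$ makes $h$ increasing near $0$, the function $g$ has exactly two positive roots; the smaller one is a local maximum of $h$ with $h>1$ and only the larger one is the global minimizer (the same happens in case (i) when $\alpha\ge p$, since then $g\to-\infty$ at both ends while $g(1)>0$). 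So uniqueness of $\tau_{min}$ cannot be read off from a root count; one must combine the monotonicity analysis of $g$ with the sign of $h'$ near $0$ and the limit $h(\tau)\to1$ to single out which critical point is the minimum, which is precisely what Lemma~\ref{l:g function} does. This is a fixable imprecision, not a fatal one, since your proposed monotonicity analysis is the right tool.

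Your third step is where you genuinely depart from the paper, and your route is both shorter and stronger. The paper extracts the integral identities $\int_{\rn}|u_0|^{p^*_s}\,dx=\lambda^{p^*_s}\int_{\rn}|U|^{p^*_s}\,dx$ (and its companions) by embedding \eqref{MAT1} into the auxiliary family \eqref{system with eta}, computing the derivative $B'(1)$ of the associated min-max level in two different ways (a Taylor expansion of the fibering map via the implicit function theorem, and the explicit synchronized minimizer of $S_{\eta,\alpha,\beta}$), and matching the answers; only then does it conclude that $u_0/\lambda$ and $v_0/\mu$ are extremals of $\mathcal{S}$. Your argument reaches the same conclusion by tracking the equality cases in the chain consisting of the componentwise Sobolev inequalities, the H\"older bound on $\int|u_0|^{\alpha}|v_0|^{\beta}$, and $h(\tau)\ge h(\tau_{min})$, with far less machinery. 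It also yields more: equality in H\"older forces $|u_0|^{p^*_s}$ and $|v_0|^{p^*_s}$ to be proportional, hence $v_0=\tau_{min}u_0$ and the ground state is genuinely synchronized with $V=U$ — a conclusion the paper's Remark says it cannot reach (although the same proportionality is implicit in the paper's own identities, which force equality in H\"older as well). The only caveat is that your equality-case argument leans on the uniqueness of $\tau_{min}$, so the correction above must be made first.
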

	
	\begin{remark}
		Since for $p\neq 2$, uniqueness of ground state solutions of \eqref{scalar equation} is not yet known, we are not able to conclude whether any ground state solution of \eqref{MAT1} is of the synchronized form i.e, of the form of $(\lambda U, \tau_{min}\lambda U)$ or not.
	\end{remark}

	Next, we consider \eqref{MAT1} with a small perturbation $\gamma>0$, namely we consider the system
	
	\begin{equation}
	\tag{$\tilde{\mathcal S_{\gamma}}$}\label{MAT2}
	\begin{cases}
	(-\Delta_p)^s u =|u|^{p^*_s-2}u+ \frac{\alpha\gamma}{p_s^*}|u|^{\alpha-2}u|v|^{\beta}\;\;\text{in}\;\rn,\vspace{.2cm}\\
	(-\Delta_p)^s v =|v|^{p^*_s-2}v+ \frac{\beta\gamma}{p_s^*}|v|^{\beta-2}v|u|^{\alpha}\;\;\text{in}\;\rn,
	\\
	u,\;v\in\dot{W}^{s,p}(\rn).
	\end{cases}
	\end{equation}
	and prove existence of positive solutions to \eqref{MAT2} in various range of $\gamma$. The corresponding energy functional of the problem \eqref{MAT2}, given by for $(u,v)\in X$
	
	\begin{equation}\label{J-functional}
	\mathcal J(u,v)=\frac{1}{p}\left(\|u\|_{\dot{W}^{s,p}}^p+\|v\|_{\dot{W}^{s,p}}^p\right)-\frac{1}{p^*_s}\int_{\rn}\left(|u|^{p^*_s}+|v|^{p^*_s}+\gamma|u|^{\alpha}|v|^{\beta}\right)dx.
	\end{equation}
	We define
	
	\begin{multline}\label{Nehari}
	\mathcal N=\left\{(u,v)\in X:u\neq0,\;v\neq0,\;\|u\|_{\dot{W}^{s,p}}^p=\int_{\rn}\left(|u|^{p^*_s}+\frac{ \alpha\gamma}{p^*_s}|u|^{\alpha}|v|^{\beta}\right)dx\right.,\\
	\left. \|v\|_{\dot{W}^{s,p}}^p=\int_{\rn}\left(|v|^{p^*_s}+\frac{ \beta\gamma}{p^*_s}|u|^{\alpha}|v|^{\beta}\right)dx\right\}.
	\end{multline}
	It is easy to see that $\mathcal N\neq\emptyset$ and that any nontrivial solution of \eqref{MAT2} is belongs to $\mathcal N.$ Set
	$$A:=\inf\limits_{(u,v)\in\mathcal N}\mathcal{J}(u,v).$$
	Consider the nonlinear system of algebraic equations
	\begin{equation}\label{algebraic eq1}
	\begin{cases}
	k^{\frac{p^*_s-p}{p}}+\frac{\alpha\gamma}{p^*_s}k^{\frac{\alpha-p}{p}}\ell^{\frac{\beta}{p}}=1,\\
	\ell^{\frac{p^*_s-p}{p}}+\frac{\beta\gamma}{p^*_s}\ell^{\frac{\beta-p}{p}}k^{\frac{\alpha}{p}}=1,\\
	k,\ell>0.
	\end{cases}
	\end{equation}
	\begin{theorem}\label{minimizer value}
		Assume that one of the following conditions hold:
		\begin{itemize}
			\item [i)] If $\frac{N}{2s}<p<\frac{N}{s}$, $\alpha,\, \beta>p$ and
			\begin{equation}\label{condition on gamma: 1}
			0<\gamma\leq\frac{p^*_s(p^*_s-p)}{p}\min\left\{\frac{1}{\alpha}\left(\frac{\alpha-p}{\beta-p}\right)^{\frac{\beta-p}{p}},\;\frac{1}{\beta}\left(\frac{\beta-p}{\alpha-p}\right)^{\frac{\alpha-p}{p}}\right\};
			\end{equation}
			\smallskip
			\item[ii)] If $\frac{2N}{N+2s}<p<\frac{N}{2s}$, $\alpha,\,\beta<p$ and
			\begin{equation}\label{condition on gamma: 2}
			\gamma\geq\frac{p^*_s(p^*_s-p)}{p}\max\left\{\frac{1}{\alpha}\left(\frac{p-\beta}{p-\alpha}\right)^{\frac{p-\beta}{p}},\;\frac{1}{\beta}\left(\frac{p-\alpha}{p-\beta}\right)^{\frac{p-\alpha}{p}}\right\}.
			\end{equation}
		\end{itemize}
		Then the least energy $A=\frac{s}{N}(k_0+\ell_0)\mathcal S^{N/sp} $ and $A$ is attained by $\left(k_0^{1/p}U,\ell_0^{1/p}U\right)$, where $U$ is a minimizer of \eqref{best constant alpha+beta}, $k_0,\;\ell_0$ satisfies \eqref{algebraic eq1} and
		\begin{equation}\label{condition on k_0}
		k_0=\min\{k:(k,\ell)\text{ satisfies }\eqref{algebraic eq1}\}
		\end{equation}
	\end{theorem}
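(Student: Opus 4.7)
The plan is to first observe that on the Nehari manifold $\mathcal{N}$ the energy simplifies drastically. Adding the two identities defining $\mathcal{N}$ and using $\alpha+\beta=p^*_s$ gives
\[
\|u\|_{\dot{W}^{s,p}}^p+\|v\|_{\dot{W}^{s,p}}^p = \int_{\rn}\bigl(|u|^{p^*_s}+|v|^{p^*_s}+\gamma|u|^\alpha|v|^\beta\bigr)\,dx,
\]
and since $\tfrac{1}{p}-\tfrac{1}{p^*_s}=\tfrac{s}{N}$, this forces $\mathcal{J}(u,v)=\tfrac{s}{N}\bigl(\|u\|_{\dot{W}^{s,p}}^p+\|v\|_{\dot{W}^{s,p}}^p\bigr)$ on $\mathcal{N}$. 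The whole problem therefore reduces to minimizing the sum of the two Gagliardo norms subject to the two Nehari constraints.

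For the upper bound, I would fix a minimizer $U$ of \eqref{best constant alpha+beta}, rescaled so that $\|U\|_{\dot{W}^{s,p}}^p=\int_{\rn}|U|^{p^*_s}\,dx=\mathcal{S}^{N/sp}$, and test with $(u,v)=(k_0^{1/p}U,\ell_0^{1/p}U)$. Dividing the two Nehari identities by $k_0\mathcal{S}^{N/sp}$ and $\ell_0\mathcal{S}^{N/sp}$ respectively shows that membership in $\mathcal{N}$ is exactly the system \eqref{algebraic eq1}, so the testing pair is admissible. The reduction above then gives $\mathcal{J}(k_0^{1/p}U,\ell_0^{1/p}U)=\tfrac{s}{N}(k_0+\ell_0)\mathcal{S}^{N/sp}$ and hence $A\le \tfrac{s}{N}(k_0+\ell_0)\mathcal{S}^{N/sp}$.

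For the matching lower bound I would take any $(u,v)\in\mathcal{N}$ and set $k=\|u\|_{\dot{W}^{s,p}}^p/\mathcal{S}^{N/sp}$, $\ell=\|v\|_{\dot{W}^{s,p}}^p/\mathcal{S}^{N/sp}$. The sharp Sobolev inequality \eqref{best constant alpha+beta} applied to $u$ and to $v$, combined with Hölder's inequality with exponents $p^*_s/\alpha$ and $p^*_s/\beta$ on the cross term, yields
\[
\int_{\rn}|u|^{p^*_s}\,dx\le \mathcal{S}^{-p^*_s/p}\|u\|_{\dot{W}^{s,p}}^{p^*_s},\qquad \int_{\rn}|u|^\alpha|v|^\beta\,dx\le \mathcal{S}^{-p^*_s/p}\|u\|_{\dot{W}^{s,p}}^{\alpha}\|v\|_{\dot{W}^{s,p}}^{\beta},
\]
together with the symmetric bound on $\int|v|^{p^*_s}$. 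Plugging these into the Nehari identities and normalizing by $\mathcal{S}^{N/sp}$ gives
\[
k^{(p^*_s-p)/p}+\tfrac{\alpha\gamma}{p^*_s}k^{(\alpha-p)/p}\ell^{\beta/p}\ge 1,\qquad
\ell^{(p^*_s-p)/p}+\tfrac{\beta\gamma}{p^*_s}\ell^{(\beta-p)/p}k^{\alpha/p}\ge 1,
\]
so $A\ge\tfrac{s}{N}\mathcal{S}^{N/sp}\cdot\inf\bigl\{k+\ell:(k,\ell)\text{ satisfies the above pair}\bigr\}$.

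The hard part is identifying this last infimum with $k_0+\ell_0$, and it is precisely here that the hypotheses (i)--(ii) enter. In case (i), $\alpha,\beta>p$ makes every exponent on the left sides nonnegative, so the feasible set is closed under increasing $k$ and $\ell$ and the infimum must be attained on its boundary; a Lagrange-multiplier argument then forces both inequalities to saturate, i.e.\ $(k,\ell)$ solves \eqref{algebraic eq1}. The smallness condition \eqref{condition on gamma: 1} is what I would use, through an intermediate-value analysis of the implicit maps $k=F(\ell)$ and $\ell=G(k)$ defined by the two equations, to guarantee existence of such a solution and to rule out spurious branches, so that the selection \eqref{condition on k_0} indeed picks the minimizer of $k+\ell$ among solutions. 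In case (ii), $\alpha,\beta<p$ inverts the monotonicities, the implicit curves can meet in two branches, and the largeness condition \eqref{condition on gamma: 2} is exactly what ensures they meet at all; again the ``smallest-$k$'' rule \eqref{condition on k_0} selects the branch realizing the infimum. Combining the two bounds shows $A=\tfrac{s}{N}(k_0+\ell_0)\mathcal{S}^{N/sp}$ and that this infimum is attained at $(k_0^{1/p}U,\ell_0^{1/p}U)$.
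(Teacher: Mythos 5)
Your high-level framework matches the paper's proof almost exactly: the reduction of $\mathcal{J}$ on $\mathcal{N}$ to $\tfrac{s}{N}(\|u\|^p_{\dot{W}^{s,p}}+\|v\|^p_{\dot{W}^{s,p}})$, the upper bound by testing with $(k_0^{1/p}U,\ell_0^{1/p}U)$, and the lower bound by applying Sobolev and H\"older to the Nehari identities to show that the (normalized) quantities satisfy $F_1\ge 0$, $F_2\ge 0$. (You normalize by the Gagliardo norms while the paper uses $\|u_n\|_{L^{p^*_s}}^p$, which costs them one extra Sobolev step at the end, but this is only cosmetic.)

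The genuine gap is in the final, crucial step: you must show that every $(k,\ell)$ with $F_1(k,\ell)\ge 0$, $F_2(k,\ell)\ge 0$, $k,\ell>0$ satisfies $k+\ell\ge k_0+\ell_0$, and this you do not actually prove. Your sketch of a Lagrange-multiplier argument is flawed as stated: minimizing the linear functional $k+\ell$ over a region cut out by two inequality constraints does not force \emph{both} constraints to be active at the minimum — a minimizer could well lie on a single constraint curve where $\nabla F_1$ is parallel to $(1,1)$ — and you never show why this cannot happen. The paper instead proves Proposition~\ref{algebraic propoistion for alpha beta bigger than p}: after the change of variables $y=c+d$, $x=c/d$, the two inequalities become $y^{(p^*_s-p)/p}\ge f_1(x)$ and $y^{(p^*_s-p)/p}\ge f_2(x)$, and the smallness condition \eqref{condition on gamma: 1} guarantees $g_1\le 0$ and $g_2\ge 0$, hence $f_1$ is globally decreasing and $f_2$ globally increasing; this forces the min over $x$ of $\max\{f_1,f_2\}$ to occur precisely where $f_1=f_2$, which is the algebraic solution. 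This monotonicity argument is the actual content of the theorem and is entirely absent from your proposal. Likewise, your description of case (ii) as an ``intermediate-value analysis to rule out spurious branches'' does not reflect what the paper does: Lemma~\ref{algebraic lemma for alpha beta less than p} establishes a \emph{uniqueness} statement (the system $F_1\ge 0$, $F_2\ge 0$, $k+\ell\le k_0+\ell_0$ has the single solution $(k_0,\ell_0)$), proved via the convexity estimate $\ell'(\tilde k)\ge -1$ coming from \eqref{condition on gamma: 2}. Without proving these algebraic facts the lower bound is unjustified, so the argument as written is incomplete.
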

	
	\begin{theorem}\label{gamma near 0}
		Assume that $\frac{2N}{N+2s}<p<\frac{N}{2s}$ and $\alpha,\,\beta<p$. There exists $\gamma_1>0$
		such that for any $\gamma\in(0,\gamma_1)$ there exists a solution $(k(\gamma),\ell(\gamma))$ of \eqref{algebraic eq1} such that
		$\left(k(\gamma)^{1/p}U,\ell(\gamma)^{1/p}U\right)$ is a positive solution of system \eqref{MAT2} with $
		\mathcal{J}\left(k(\gamma)^{1/p}U,\ell(\gamma)^{1/p}U\right)>\tilde{A}
		$, where $U$ is a minimizer of \eqref{best constant alpha+beta},
		$$\tilde{A}=\inf_{(u,v)\in\mathcal{\tilde N}}\mathcal{J}(u, v)$$ and
		$$\tilde{\mathcal N}=\left\{(u,v)\in X\setminus\{0\}:\|u\|_{\dot{W}^{s,p}}^p+\|v\|_{\dot{W}^{s,p}}^p=\int_{\rn}\left(|u|^{p^*_s}+|v|^{p^*_s}+\gamma|u|^{\alpha}|v|^{\beta}\right)dx\right\}.$$
		
	\end{theorem}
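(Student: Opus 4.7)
The plan is to treat \eqref{MAT2} with small $\gamma$ as a perturbation of the uncoupled critical equation \eqref{scalar equation} and to produce the solution purely algebraically. Recall from the discussion preceding Theorem~\ref{main-gr1} that if $U$ is a positive minimizer of \eqref{best constant alpha+beta} normalised so that $\|U\|_{\dot{W}^{s,p}}^p=\int_{\rn}|U|^{p^*_s}=\mathcal S^{N/sp}$ (equivalently, $(-\Delta_p)^s U=|U|^{p^*_s-2}U$), then $(k^{1/p}U,\ell^{1/p}U)$ solves \eqref{MAT2} precisely when $(k,\ell)$ satisfies \eqref{algebraic eq1}. Hence the problem reduces to producing a positive branch $(k(\gamma),\ell(\gamma))$ of \eqref{algebraic eq1} emanating from $\gamma=0$.

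At $\gamma=0$ the system \eqref{algebraic eq1} decouples to $k^{(p^*_s-p)/p}=\ell^{(p^*_s-p)/p}=1$, whose unique positive solution is $(1,1)$. Writing the two defining equations as $F_i(k,\ell,\gamma)=0$ for $i=1,2$, the Jacobian of $(F_1,F_2)$ in $(k,\ell)$ at $(1,1,0)$ equals $\tfrac{p^*_s-p}{p}\,\mathrm{Id}$, which is invertible since $p^*_s>p$. The implicit function theorem therefore yields some $\gamma_1>0$ and a $C^1$ curve $\gamma\mapsto(k(\gamma),\ell(\gamma))$ on $[0,\gamma_1)$ solving \eqref{algebraic eq1} with $(k(0),\ell(0))=(1,1)$. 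After shrinking $\gamma_1$ if necessary, both components remain strictly positive, so $(k(\gamma)^{1/p}U,\ell(\gamma)^{1/p}U)$ is a positive solution of \eqref{MAT2}.

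For the energy comparison I sum the two Nehari identities defining $\mathcal N$ and use $\alpha+\beta=p^*_s$ to obtain $\mathcal J(u,v)=\tfrac{s}{N}(\|u\|_{\dot{W}^{s,p}}^p+\|v\|_{\dot{W}^{s,p}}^p)$ on $\mathcal N$, and the same identity is immediate on the larger set $\tilde{\mathcal N}$. For our branch this gives
\[
\mathcal J\bigl(k(\gamma)^{1/p}U,\ell(\gamma)^{1/p}U\bigr)=\tfrac{s}{N}\bigl(k(\gamma)+\ell(\gamma)\bigr)\mathcal S^{N/sp}\longrightarrow \tfrac{2s}{N}\mathcal S^{N/sp}\quad\text{as }\gamma\to 0^+.
\]
Because $\|U\|_{\dot{W}^{s,p}}^p=\int|U|^{p^*_s}$, the pair $(U,0)$ lies in $\tilde{\mathcal N}$, yielding $\tilde A\le\mathcal J(U,0)=\tfrac{s}{N}\mathcal S^{N/sp}<\tfrac{2s}{N}\mathcal S^{N/sp}$. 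A final shrinking of $\gamma_1$, justified by continuity of $\gamma\mapsto k(\gamma)+\ell(\gamma)$, then forces $\mathcal J(k(\gamma)^{1/p}U,\ell(\gamma)^{1/p}U)>\tilde A$ for every $\gamma\in(0,\gamma_1)$. I do not expect a serious obstacle here: the argument is largely bookkeeping once the implicit function theorem is invoked, and the only subtleties, namely positivity of the branch and the upper bound on $\tilde A$, are handled by continuity at $(1,1)$ and by the one-component test pair $(U,0)\in\tilde{\mathcal N}$ respectively.
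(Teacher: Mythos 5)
Your proposal is correct and follows essentially the same route as the paper: apply the implicit function theorem to $F_1,F_2$ at $(k,\ell,\gamma)=(1,1,0)$, where the Jacobian is $\tfrac{p^*_s-p}{p}\,\mathrm{Id}$, obtain the positive $C^1$ branch, and conclude from $k(\gamma)+\ell(\gamma)\to 2$ that the energy $\tfrac{s}{N}(k(\gamma)+\ell(\gamma))\mathcal S^{N/sp}$ exceeds $\tilde A$ for small $\gamma$. The only cosmetic difference is that you bound $\tilde A\le\tfrac{s}{N}\mathcal S^{N/sp}$ directly via the test pair $(U,0)\in\tilde{\mathcal N}$, whereas the paper invokes the strict inequality $\tilde A<\tfrac{s}{N}\mathcal S^{N/sp}$ established earlier in \eqref{tl-A-U}; both suffice since $k(\gamma)+\ell(\gamma)>1$ for small $\gamma$.
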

	
	\begin{theorem}\label{in ball}
		Assume that $\frac{2N}{N+2s}<p<\frac{N}{2s}$ and $\alpha,\beta<p.$ Then the following system of equations
		\begin{equation}\label{system_ball}
		\begin{cases}
		(-\Delta_p)^s u =|u|^{p^*_s-2}u+ \frac{\alpha\gamma}{p_s^*}|u|^{\alpha-2}u|v|^{\beta}\;\;\text{in}\;B_R(0),\vspace{.2cm}
		\\
		(-\Delta_p)^s v =|v|^{p^*_s-2}v+ \frac{\beta\gamma}{p_s^*}|v|^{\beta-2}v|u|^{\alpha}\;\;\text{in}\;B_R(0),
		\\
		u,\;v\in W^{s,p}_{0}(B_R(0)),
		\end{cases}
		\end{equation}
		admit a radial positive solution $(u_0,v_0)$.
	\end{theorem}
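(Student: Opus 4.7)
The plan is a constrained minimization on the Nehari manifold associated to \eqref{system_ball}, restricted to the radial subspace $X_R^{\mathrm{rad}}$ of $W^{s,p}_0(B_R(0))\times W^{s,p}_0(B_R(0))$, combined with a Brezis--Nirenberg type test-function computation to beat the threshold at which compactness can fail. Let $\mathcal J_R$ and $\mathcal N_R$ denote the ball analogues of $\mathcal J$ and $\mathcal N$ in \eqref{J-functional}--\eqref{Nehari}. Summing the two Nehari identities gives $\mathcal J_R(u,v)=\frac{s}{N}\bigl(\|u\|_{\dot W^{s,p}}^p+\|v\|_{\dot W^{s,p}}^p\bigr)$ on $\mathcal N_R$. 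Using a two-parameter fibering map $(t,\sigma)\mapsto\mathcal J_R(tu,\sigma v)$ and the hypothesis $\alpha,\beta<p<p^*_s$, one shows that every nontrivial radial pair $(u,v)$ is uniquely projected onto $\mathcal N_R\cap X_R^{\mathrm{rad}}$; set $A_R:=\inf_{\mathcal N_R\cap X_R^{\mathrm{rad}}}\mathcal J_R$, which is strictly positive by the Sobolev inequality applied to each Nehari constraint.

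Next, I would establish the strict upper bound
$$A_R<\tfrac{s}{N}(k_0+\ell_0)\,\mathcal S^{N/(sp)},$$
where $(k_0,\ell_0)$ is the distinguished solution of \eqref{algebraic eq1} given by \eqref{condition on k_0}. The competitor is $(k_0^{1/p}\eta U_\varepsilon,\ell_0^{1/p}\eta U_\varepsilon)$, where $U$ is a radial ground state of \eqref{scalar equation}, $U_\varepsilon(x)=\varepsilon^{(sp-N)/p}U(x/\varepsilon)$ concentrates at the origin, and $\eta$ is a smooth radial cut-off supported in $B_R$ with $\eta\equiv 1$ on $B_{R/2}$. After projecting this pair onto $\mathcal N_R$ and expanding in $\varepsilon$, the truncation error is controlled by the sharp decay estimate $|x|^{(N-sp)/(p-1)}U(x)\to U_\infty\ne 0$ from \cite[Theorem 1.1]{BMS}, while the coupling integral $\gamma\int|u|^\alpha|v|^\beta$, strictly positive for $\gamma>0$, produces a strictly negative correction to the projected energy, giving the required strict inequality.

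Given this upper bound, take a minimizing sequence $(u_n,v_n)\in\mathcal N_R\cap X_R^{\mathrm{rad}}$ for $A_R$. The identity $A_R=\frac{s}{N}(\|u_n\|_{\dot W^{s,p}}^p+\|v_n\|_{\dot W^{s,p}}^p)+o(1)$ gives boundedness, so up to a subsequence $(u_n,v_n)\rightharpoonup(u_0,v_0)$ weakly in $X_R$, strongly in $L^q(B_R)\times L^q(B_R)$ for all $q<p^*_s$, and almost everywhere. Applying the Brezis--Lieb lemma to $|u_n|^{p^*_s}$, $|v_n|^{p^*_s}$, $|u_n|^\alpha|v_n|^\beta$ and to the associated $p$-Gagliardo seminorms, any nonvanishing defect $(u_n-u_0,v_n-v_0)$ would, via the sharp Sobolev inequality applied component-wise and the algebraic system \eqref{algebraic eq1}, contribute at least $\tfrac{s}{N}(k_0+\ell_0)\mathcal S^{N/(sp)}$ to the limit energy, contradicting the previous upper bound. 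Hence the convergence is strong, $(u_0,v_0)$ lies in $\mathcal N_R\cap X_R^{\mathrm{rad}}$ and attains $A_R$. Replacing $(u_0,v_0)$ by $(|u_0|,|v_0|)$ does not increase the energy and preserves the Nehari projection, so $u_0,v_0\ge 0$; the Lagrange multiplier argument yields a weak solution of \eqref{system_ball}, and the strong maximum principle for $(-\Delta_p)^s$ on $B_R$ upgrades the solution to strictly positive, with radial symmetry automatic.

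The main obstacle is the test-function estimate in Step 2: for $p\ne 2$ the ground state $U$ of \eqref{scalar equation} is not known explicitly, so the standard Talenti-type closed-form expansion is unavailable. The remainder coming from the cut-off must therefore be controlled through the qualitative decay from \cite{BMS} together with a careful Young-type bound on the coupling term; a subsidiary technical issue is making the fibering projection and the concentration-compactness alternative fully rigorous in the fractional $p$-Laplacian framework, where the double-integral structure of the seminorm and the nonlinearity of the operator preclude a direct transcription of the $p=2$ or local-operator arguments.
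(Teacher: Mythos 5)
Your proposal is a Brezis--Nirenberg style direct minimization on a Nehari set with a concentrating test function, which is a genuinely different route from the paper --- but it contains a gap that I do not think can be repaired within that route. The paper instead introduces the subcritical approximating system \eqref{epsilon system}, solves it by compact embedding and Schwartz symmetrization (Proposition~\ref{radially symmetric solution}), establishes a uniform-in-$\epsilon$ positive lower bound $\tilde A_\epsilon(R)\ge C_{\epsilon_0}$ (Lemma~\ref{Lemma 3.5}) and the upper bound $\limsup_\epsilon \tilde A_\epsilon(R)\le\tilde A$, passes to the limit $\epsilon\to 0$, and uses the strict inequality $\tilde A<\tfrac{s}{N}\mathcal S^{N/(sp)}$ (from \eqref{tl-A-U}) to rule out a vanishing component.

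The gap in your Step 2 is structural. You claim that, after truncating and projecting $(k_0^{1/p}\eta U_\varepsilon,\ell_0^{1/p}\eta U_\varepsilon)$ onto the ball Nehari set, the coupling term $\gamma\int|u|^\alpha|v|^\beta$ ``produces a strictly negative correction'' giving $A_R<\tfrac{s}{N}(k_0+\ell_0)\mathcal S^{N/(sp)}$. But here $\alpha+\beta=p^*_s$, so the coupling is critical, not subcritical: it scales exactly like $|u|^{p^*_s}$ and is already fully accounted for in the choice of $(k_0,\ell_0)$ solving \eqref{algebraic eq1}, whose associated synchronized pair attains precisely the level $\tfrac{s}{N}(k_0+\ell_0)\mathcal S^{N/(sp)}$ on $\rn$ by Theorem~\ref{minimizer value}. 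Unlike in the classical Brezis--Nirenberg setting, there is no lower-order term that could absorb the positive truncation penalty. In fact, by the same dilation-invariance argument the paper uses to prove $\tilde A(R)=\tilde A$, the ball-level infimum equals the $\rn$-level infimum, so the strict inequality you need is false, the infimum on your $\mathcal N_R\cap X_R^{\mathrm{rad}}$ is not attained (even with the radial restriction, since radiality does not restore compactness of $W^{s,p}_0(B_R)\hookrightarrow L^{p^*_s}(B_R)$), and your concentration-compactness step in Step~3 cannot conclude. Note also that the scalar Pohozaev-type obstruction means $(-\Delta_p)^s u=|u|^{p^*_s-2}u$ in $B_R$ has no nontrivial Dirichlet solution, which is another signal that a purely critical test-function argument in the ball must fail. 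The nontrivial limit in the paper's proof is obtained not as a minimizer of the critical Nehari level on the ball, but as a nondegenerate limit of subcritical least-energy solutions; the threshold that must be strictly beaten is the single-component level $\tfrac{s}{N}\mathcal S^{N/(sp)}$, not $\tfrac{s}{N}(k_0+\ell_0)\mathcal S^{N/(sp)}$, and that strict inequality is proved by the asymptotic expansion in Lemma~\ref{upper bdd of tilde A epsilon} on the subcritical problem, not by a concentrating bubble.
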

	
	The organization of the rest of the paper is as follows: In Section 2, we prove Theorem~\ref{main-gr1}. Section 3 deals with proof of Theorem~\ref{minimizer value}, \ref{in ball} and \ref{gamma near 0} respectively.

	\section{Proof of Theorem~\ref{main-gr1}}
	
	\begin{lemma}\label{basic lemma}
		Suppose $\alpha,\;\beta>1$ such that $\alpha+\beta=p^*_s.$ Then
		\begin{itemize}
			\item[i)] $S_{\alpha,\beta}=h(\tau_{min})\mathcal{S}. $ 	\item[ii)]	$S_{\alpha,\beta}$ has minimizers $(U,\tau_{min}U)$, where $U$ is a ground state solution of \eqref{scalar equation} and $\tau_{min}$ satisfies

			$$
			\tau^{p-1}\left(p^*_s+\alpha\tau^{\beta}-\beta\tau^{\beta-p}-p^*_s\tau^{p^*_s-p}\right)=0.
			$$
		\end{itemize}
	\end{lemma}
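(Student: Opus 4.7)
The plan is to evaluate $S_{\alpha,\beta}$ exactly by sandwiching it between two matching bounds: an upper bound by plugging in a carefully chosen test function and a lower bound obtained via the scalar Sobolev inequality combined with H\"older's inequality. The coincidence between the two bounds will force the minimizer to have the synchronized form claimed in the statement.

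For the upper bound, I would take $U$ to be any minimizer of \eqref{best constant alpha+beta}, so that $\|U\|_{\dot{W}^{s,p}}^p = \mathcal{S}\bigl(\int_{\rn}|U|^{p^*_s}\bigr)^{p/p^*_s}$, and test the Rayleigh quotient defining $S_{\alpha,\beta}$ with $(U,\tau U)$ for arbitrary $\tau\geq 0$. Because $\alpha+\beta=p^*_s$, a direct computation gives
\begin{equation*}
\frac{\|U\|_{\dot W^{s,p}}^p+\|\tau U\|_{\dot W^{s,p}}^p}{\bigl(\int_{\rn}(|U|^{p^*_s}+|\tau U|^{p^*_s}+|U|^{\alpha}|\tau U|^{\beta})\,dx\bigr)^{p/p^*_s}}=\frac{(1+\tau^p)\mathcal{S}}{(1+\tau^{p^*_s}+\tau^{\beta})^{p/p^*_s}}=h(\tau)\,\mathcal{S},
\end{equation*}
so $S_{\alpha,\beta}\leq h(\tau)\mathcal{S}$ for every $\tau\geq 0$, hence $S_{\alpha,\beta}\leq h(\tau_{min})\mathcal{S}$ once $\tau_{min}$ is defined as a minimizer of $h$ on $[0,\infty)$. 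Existence of $\tau_{min}$ follows from the continuity of $h$ on $[0,\infty)$ together with $h(0)=1$ and $h(\tau)\to 1$ as $\tau\to\infty$ (the latter because $p^*_s$ is the dominant exponent in the denominator).

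For the lower bound, fix arbitrary $(u,v)\in X\setminus\{(0,0)\}$ and set $a:=\bigl(\int_{\rn}|u|^{p^*_s}\bigr)^{1/p^*_s}$, $b:=\bigl(\int_{\rn}|v|^{p^*_s}\bigr)^{1/p^*_s}$. Applying the scalar Sobolev inequality $\|u\|_{\dot W^{s,p}}^p\geq\mathcal{S}\,a^p$ and likewise for $v$, together with H\"older's inequality $\int_{\rn}|u|^{\alpha}|v|^{\beta}\,dx\leq a^{\alpha}b^{\beta}$ (valid since $\alpha/p^*_s+\beta/p^*_s=1$), gives
\begin{equation*}
\frac{\|u\|_{\dot W^{s,p}}^p+\|v\|_{\dot W^{s,p}}^p}{\bigl(\int_{\rn}(|u|^{p^*_s}+|v|^{p^*_s}+|u|^{\alpha}|v|^{\beta})\,dx\bigr)^{p/p^*_s}}\geq\mathcal{S}\,\frac{a^p+b^p}{(a^{p^*_s}+b^{p^*_s}+a^{\alpha}b^{\beta})^{p/p^*_s}}.
\end{equation*}
If $a$ or $b$ vanishes, the right-hand side is at least $\mathcal{S}\geq h(\tau_{min})\mathcal{S}$; otherwise, setting $\tau=b/a$, the right-hand side equals $\mathcal{S}\,h(\tau)\geq\mathcal{S}\,h(\tau_{min})$. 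Taking the infimum over $(u,v)$ yields $S_{\alpha,\beta}\geq h(\tau_{min})\mathcal{S}$, which proves (i).

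For (ii), the characterization of $\tau_{min}$ comes from $h'(\tau_{min})=0$: after taking logarithmic derivatives and clearing denominators, the relation $p^*_s\tau^{p-1}(1+\tau^{\beta}+\tau^{p^*_s})=(1+\tau^p)(\beta\tau^{\beta-1}+p^*_s\tau^{p^*_s-1})$ collapses, using $\alpha=p^*_s-\beta$, to the stated equation $\tau^{p-1}\bigl(p^*_s+\alpha\tau^{\beta}-\beta\tau^{\beta-p}-p^*_s\tau^{p^*_s-p}\bigr)=0$. The test function $(U,\tau_{min}U)$ saturates both inequalities above, hence is a minimizer. The main subtlety I anticipate is the equality analysis: one must verify that equality in the scalar Sobolev inequality forces both $u$ and $v$ to be ground states of \eqref{scalar equation} and that equality in the H\"older step forces $|v|$ to be a positive scalar multiple of $|u|$, which together with $\tau=\tau_{min}$ pin down the structure $(U,\tau_{min}U)$. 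Everything else is either a computation or a standard compactness/continuity statement for $h$.
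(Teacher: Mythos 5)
Your proposal is correct and follows essentially the same route as the paper: an upper bound by testing with $(U,\tau U)$, and a matching lower bound obtained by reducing the cross term to $L^{p^*_s}$ norms (you via H\"older with exponents $p^*_s/\alpha$, $p^*_s/\beta$; the paper via the equivalent normalize-then-Young step) and then invoking the scalar Sobolev inequality, after which the quotient is bounded below by $h(\tau)\mathcal{S}\geq h(\tau_{min})\mathcal{S}$. Your derivation of the Euler equation for $\tau_{min}$ from $h'(\tau_{min})=0$ matches the paper's, and the equality analysis you flag as a subtlety is not actually needed for part (ii) as stated, since one only has to exhibit $(U,\tau_{min}U)$ as a minimizer, not classify all minimizers.
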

	\begin{proof}
		Let $\{(u_n, v_n)\}$ be a minimizing sequence in $X$ for $S_{\alpha,\beta}$. Choose $\tau_n>0$ such that $\|v_n\|_{L^{p^*_s}(\rn)}=\tau_n\|u_n\|_{L^{p^*_s}(\rn)}$. Now set, $w_n=\frac{v_n}{\tau_n}$. Therefore, $\|u_n\|_{L^{p^*_s}(\rn)}=\|w_n\|_{L^{p^*_s}(\rn)}$ and applying Young's inequality,
		$$\int_{\rn}|u_n|^\alpha|w_n|^{\beta}dx\leq \frac{\alpha}{p^*_s}\int_{\rn}|u_n|^{p^*_s}dx+\frac{\beta}{p^*_s}\int_{\rn}|w_n|^{p^*_s}dx=\int_{\rn}|u_n|^{p^*_s}dx=\int_{\rn}|w_n|^{p^*_s}dx.$$
		Therefore,
		\begin{multline*}
		S_{\alpha,\beta}+o(1)
		=\frac{\|u_n\|_{\dot{W}^{s,p}}^p+\|v_n\|_{\dot{W}^{s,p}}^p}{\left(\displaystyle\int_{\rn}\left(|u_n|^{p^*_s}+|v_n|^{p^*_s}+|u_n|^{\alpha}|v_n|^{\beta}\right)dx\right)^{\frac{p}{p^*_s}}}
		\\
		=\frac{\|u_n\|_{\dot{W}^{s,p}}^p}{\left(\displaystyle\int_{\rn}\left(|u_n|^{p^*_s}+\tau_n^{p^*_s}|u_n|^{p^*_s}+\tau_n^{\beta}|u_n|^{\alpha}|w_n|^{\beta}\right)dx\right)^{\frac{p}{p^*_s}}}
		\\
		+\frac{\tau_n^p\;\|w_n\|_{\dot{W}^{s,p}}^p}{\left(\displaystyle\int_{\rn}\left(|u_n|^{p^*_s}+\tau_n^{p^*_s}|u_n|^{p^*_s}+\tau_n^{\beta}|u_n|^{\alpha}|w_n|^{\beta}\right)dx\right)^{\frac{p}{p^*_s}}}
		\\
		\geq\frac{1}{\left(1+\tau_n^\beta+\tau_n^{p^*_s}\right)^{p/{p^*_s}}}\left[\frac{\|u_n\|_{\dot{W}^{s,p}}^p}{\left(\displaystyle\int_{\rn}|u_n|^{p^*_s}dx\right)^{\frac{p}{p^*_s}}}+\frac{\tau_n^p\;\|w_n\|_{\dot{W}^{s,p}}^p}{\left(\displaystyle\int_{\rn}|w_n|^{p^*_s}dx\right)^{\frac{p}{p^*_s}}}\right]
		\\
		\geq\frac{1+\tau_n^p}{\left(1+\tau_n^\beta+\tau_n^{p^*_s}\right)^{p/{p^*_s}}}\mathcal{S}\geq\min_{\substack{\tau>0}}h(\tau)\mathcal{S}.
		\end{multline*}	
		Thus, as $n\to\infty$, we have $h(\tau_{min})\mathcal{S}\leq S_{\alpha,\beta}.$	
		For the reverse inequality, we choose $u=U$, $v=\tau_{min}U$ to get
		$h(\tau_{min})\mathcal{S}\geq S_{\alpha,\beta}.$ In Lemma~\ref{l:g function}, we will show that point $\tau_{min}$ exists.   This proves i).
		
		ii) Taking $(u,v)=(U,\tau_{min}U)$, a simple computation yields that
		$$
		\frac{\|u\|_{\dot{W}^{s,p}}^p+\|v\|_{\dot{W}^{s,p}}^p}{\left(\displaystyle\int_{\rn}\left(|u|^{p^*_s}+|v|^{p^*_s}+|u|^{\alpha}|v|^{\beta}\right)dx\right)^{\frac{p}{p^*_s}}}=h(\tau_{min})\mathcal{S}.
		$$	
		By using i), we infer that $(U,\tau_{min}U)$ is a minimizer of $S_{\alpha,\beta}$. Further, since $\tau_{min}$ is a critical point of $h$, computing $h'(\tau_{min})=0$ yields that $\tau_{min}$ satisfies
		$$\tau^{p-1}\left(p^*_s+\alpha\tau^{\beta}-\beta\tau^{\beta-p}-p^*_s\tau^{p^*_s-p}\right)=0.$$
	\end{proof}
	
	We observe from \eqref{eq:h} that $h(0)=1$ and $\lim_{\tau\to\infty}h(\tau)=1$. Therefore, to ensure the existence of $\tau_{min}$ (i.e., minimum point of $h$ does not escape at infinity), $\tau_{min}$ is uniquely defined and $\tau_{min}>0$, we need to investigate the solvability of the following equation
	\begin{equation}\label{def: g function}
	g(\tau):=p^*_s+\alpha\tau^{\beta}-\beta\tau^{\beta-p}-p^*_s\tau^{p^*_s-p}=0.
	\end{equation}
	\begin{lemma}\label{l:g function}
		Let $\alpha,\;\beta>1$ and $\alpha+\beta=p^*_s$. Then \eqref{def: g function}  always has atleast one root $\tau>0$ and for any root $\tau>0$, the problem \eqref{MAT1} has positive solutions $(\lambda U,\mu U)$, where
		$$
		\mu=\tau\lambda,\quad \lambda^{p^*_s-p}=\frac{p^*_s}{p^*_s+\alpha\tau^{\beta}}.
		$$
		Moreover, if one of the following conditions hold
		\begin{enumerate}
			\item [(i)] $1<\beta<p$,
			\item [(ii)] $\beta=p$ and $\alpha<p$,
			\item [(iii)] $\beta>p$ and  $\alpha<p$,
		\end{enumerate}
		then,  $\tau_{min}>0$ and $h(\tau_{min})<1$. In all other cases, $\tau_{min}=0$.
	\end{lemma}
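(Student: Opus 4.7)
My plan is to split the lemma into three parts: (a) verify that every positive root $\tau$ of $g$ gives a solution $(\lambda U,\tau\lambda U)$ of \eqref{MAT1} with the stated formula for $\lambda$; (b) produce at least one such positive root; and (c) determine precisely when $\tau_{min}>0$. Task (a) is the direct substitution already sketched before Theorem~\ref{main-gr1}: after normalizing $U$ so that $(-\Delta_p)^s U=U^{p^*_s-1}$, plug $(u,v)=(\lambda U,\mu U)$ with $\mu=\tau\lambda$ into \eqref{MAT1}; dividing through by common positive factors collapses the system to $\lambda^{p^*_s-p}=p^*_s/(p^*_s+\alpha\tau^\beta)$ together with $g(\tau)=0$.

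For (b) I would rely on the computation in Lemma~\ref{basic lemma} showing that for $\tau>0$, $h'(\tau)=0$ iff $g(\tau)=0$. Since $h$ is continuous on $[0,\infty)$ with $h(0)=1$ and $\lim_{\tau\to\infty}h(\tau)=1$ (factor $\tau^{p^*_s}$ out of the denominator), and since $h\not\equiv 1$ (a Taylor expansion at $0$ rules this out), $h$ attains either its supremum or its infimum over $[0,\infty)$ at some $\tau_\ast\in(0,\infty)$, producing a critical point and hence a positive root of $g$.

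For (c) I would analyze the sign of $\phi(\tau):=(1+\tau^p)^{p^*_s/p}-1-\tau^\beta-\tau^{p^*_s}$, noting that $h(\tau)<1\iff\phi(\tau)<0$. A binomial expansion at $\tau=0$ gives $\phi(\tau)=\frac{p^*_s}{p}\tau^p-\tau^\beta-\tau^{p^*_s}+O(\tau^{2p})$, so $\phi<0$ for small $\tau>0$ exactly when $\beta<p$. Rewriting $\phi(\tau)=\tau^{p^*_s}\bigl[(1+\tau^{-p})^{p^*_s/p}-\tau^{-p^*_s}-\tau^{\beta-p^*_s}-1\bigr]$ and expanding for large $\tau$, the leading correction is $\frac{p^*_s}{p}\tau^{p^*_s-p}-\tau^\beta$; since $p^*_s-p-\beta=\alpha-p$, $\phi<0$ for large $\tau$ exactly when $\alpha<p$. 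The union ``$\beta<p$ or $\alpha<p$'' is precisely cases (i)--(iii). In this regime, continuity of $h$ together with $h(0)=h(\infty)=1$ forces a global minimizer $\tau_{min}>0$ of $h$ with $h(\tau_{min})<1$.

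The key obstacle is the complementary case $\alpha,\beta\geq p$, where I must prove $\phi\geq 0$ on $[0,\infty)$. My plan is a multiplicative splitting. Setting $y=\tau^p$, $r=\beta/p\geq 1$, $s'=\alpha/p\geq 1$, and $q=p^*_s/p=r+s'$, the elementary inequality $(1+y)^\rho\geq 1+y^\rho$ for all $\rho\geq 1$ and $y\geq 0$ (equality at $y=0$, and $\rho(1+y)^{\rho-1}\geq\rho y^{\rho-1}$) applied with $\rho=r$ and $\rho=s'$ and multiplied yields $(1+y)^q\geq(1+y^r)(1+y^{s'})=1+y^r+y^{s'}+y^q\geq 1+y^r+y^q$, which is exactly $\phi(\tau)\geq 0$. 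Consequently $h\geq 1=h(0)$ on $[0,\infty)$, so the minimum is attained at $\tau_{min}=0$, as claimed.
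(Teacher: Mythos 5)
Your proposal is correct, and it reaches the conclusion by a genuinely different route from the paper. The paper proves the lemma by a case-by-case sign and monotonicity analysis of $g$ itself (examining $g(0)$, $g(1)$, $g'$, and the limits at infinity separately for $\beta<p$, $\beta=p$, $\beta>p$ with subcases in $\alpha$), and then reads off the shape of $h$ from the factorization $h'=fg$. You instead work directly with the inequality $h\lessgtr 1$ through $\phi(\tau)=(1+\tau^p)^{p^*_s/p}-1-\tau^\beta-\tau^{p^*_s}$: existence of a positive root of $g$ follows softly from $h(0)=\lim_{\tau\to\infty}h(\tau)=1$ and $h\not\equiv 1$; the asymptotics of $\phi$ at $0$ and $\infty$ identify exactly when $h$ dips below $1$ (namely $\beta<p$ or $\alpha<p$, which is precisely the union of (i)--(iii)); and the complementary case $\alpha,\beta\ge p$ is dispatched in one line by $(1+y)^{r+s'}\ge(1+y^{r})(1+y^{s'})\ge 1+y^{r}+y^{r+s'}$ with $r=\beta/p$, $s'=\alpha/p\ge 1$. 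This is shorter and arguably more transparent than the paper's argument, and it proves the lemma exactly as stated. The trade-off is that it yields strictly less structural information: the paper's analysis additionally counts the critical points of $h$ (one or two in each case), identifies $\tau_{min}$ as the second root of $g$ where relevant, and implicitly delivers the facts that the positive minimizer is unique and that $g'(\tau_{min})>0$ --- both of which are invoked later (in the statement of Theorem~\ref{main-gr1} and in the implicit-function-theorem step of its proof). If you adopt your route, you would still need a supplementary argument (e.g., your $\phi$-analysis refined by the paper's root count for $g$) to recover those downstream claims.
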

	
	\begin{proof}
		Clearly, if $\tau>0$ solves
		\begin{equation*}
		\begin{cases}
		\left(p^*_s+\alpha\tau^{\beta}\right)\lambda^{p^*_s-p}=p^*_s,
		\\
		\left(p^*_s\tau^{p^*_s-p}+\beta\tau^{\beta-p}\right)\lambda^{p^*_s-p}=p^*_s,
		\end{cases}
		\end{equation*}
		then $(\lambda U,\mu U)$ with $\mu=\tau\lambda$ solves \eqref{MAT1}. Thus to prove the required result, it is enough to show that \eqref{def: g function} has positive roots $\tau,$ which we discuss in the following cases.
		\medskip
		
		\noindent\textbf{\textit{Case 1:}} If $1<\beta<p.$
		\newline Therefore, $\lim\limits_{\tau\to0^+}g(\tau)=-\infty.$
		\newline Now, if $\alpha\geq p,$ then $g(1)=\alpha-\beta>0$. Thus, there exists $\tau\in(0, 1)$ such that $g(\tau)=0.$\\
		If $1<\alpha<p$, then we have $p^*_s-p<p^*_s-\alpha=\beta$, and consequently, $\lim\limits_{\tau\to\infty}g(\tau)=\infty.$ Thus there exists $\tau>0$ such that $g(\tau)=0.$
		
		Also observe that, by direct computation we obtain
		$$h'(\tau)=f(\tau)g(\tau), \quad\text{where}\quad f(\tau)=\frac{p\tau^{p-1}}{p^*_s(1+\tau^\beta+\tau^{p^*_s})^{\frac{p}{p^*_s}+1}}.$$
		Thus, $f(\tau)\geq 0$ for all $\tau>0$ and $f(0)=0$. This together with the fact that $\lim\limits_{\tau\to0^+}g(\tau)=-\infty$ implies $h'(\tau)<0$ in $\tau\in(0,\epsilon)$
		for some $\epsilon>0$. This means $h$ is a decreasing function near $0$. Combining this with the fact that $h(0)=1$ and $\lim_{\tau\to\infty}h(\tau)=1$, we conclude that there exists a point $\tau_{min}\in (0,\infty)$ such that
		$\min_{\tau\geq 0}h(\tau)=h(\tau_{min})<1$ and this holds for all $\alpha>1$.
		
		\noindent\textbf{\textit{Case 2:}} If $\beta=p.$
		\newline In this case $g$ becomes $g(\tau)=\alpha(1+\tau^p)-p^*_s\tau^{\alpha}$. Hence $g(0)=\alpha>0$ and $g(1)=\alpha-p.$\\
		(i) If $\alpha=p$, then $N=2sp$ and $g(\tau)=p-p\tau^p$. Thus there exists a unique root $\tau_1=1$ of $g$. Also note that $h$ is increasing near 0. Hence $\tau_1$ is the maximum point of $h$ with $h(\tau_1)>1$. In this case $\min_{\tau\geq 0}h(\tau)=h(\tau_{min})=h(0)$.
		\begin{figure}[h]
			\centering
			\subfigure[The case $2(i)$] {
				\begin{tikzpicture}[scale=.8]
				\draw[->][] (-.25,0)--(7,0) node[anchor=west] {\tiny{$\tau$}};
				\draw[->][] (0,-.25)--(0,2) node[anchor=east] {\tiny{$y$}};
				\draw[](6,1)--(0,1) node[anchor=east] {\tiny{$y=1$}};

				\draw[][-] [domain=0:5.5,smooth] plot ({\x}, {10*((1+\x^1.5)/(1+\x^1.5+\x^3)^.5)-9});
				\draw[](4,2.15) node[anchor=north east ]{\tiny{$h(\tau)$}};
				\draw[](.99,2.53)--(.99,0) node[anchor=north]{\tiny{$\tau_1$}};
				\end{tikzpicture}
			}
		\end{figure}\\
		(ii) If $1<\alpha< p$, then we have $2sp<N<sp(p+1)$. Observe that $g(0)=\alpha>0$, $g(1)=\alpha-p<0$ and $\lim\limits_{\tau\to\infty}g(\tau)=+\infty$. Also note that, $g$ is decreasing in $\left(0,\left(p^*_s/p\right)^{\frac{1}{p-\alpha}}\right)$ and increasing in $\left(\left(p^*_s/p\right)^{\frac{1}{p-\alpha}},\infty\right)$. Therefore, $g$ has exactly one critical point $\left(p^*_s/p\right)^{\frac{1}{p-\alpha}}$ and two roots $\tau_i \;(i=1,2)$ with $\tau_1\in(0, \left(p^*_s/p\right)^{\frac{1}{p-\alpha}})$, $\tau_2\in(\left(p^*_s/p\right)^{\frac{1}{p-\alpha}},\infty)$.
		\begin{figure}[h]
			\subfigure[The case $2(ii)$] {
				\begin{tikzpicture}[scale=.7]
				\draw[->][] (-.25,0)--(4,0) node[anchor=west] {\tiny{$\tau$}};
				\draw[->][] (0,-.25)--(0,2) node[anchor=east] {\tiny{$y$}};

				\draw[][-] [domain=0:3.5] plot ({\x}, {1.1+1.1*\x^2-3.1*\x^1.1});
				\draw[](3.35,2) node[anchor=north east ]{\tiny{$g(\tau)$}};
				\draw[](1,0) node[anchor=south east ]{\tiny{$\tau_1$}};
				\draw[](2.9,0) node[anchor=south east ]{\tiny{$\tau_2$}};
				\end{tikzpicture}

				\begin{tikzpicture}[scale=.85]
				\draw[->][] (-.25,0)--(7,0) node[anchor=west] {\tiny{$\tau$}};
				\draw[->][] (0,-.25)--(0,2) node[anchor=east] {\tiny{$y$}};
				\draw[](7,1)--(0,1) node[anchor=east] {\tiny{$h(0)=1$}};
				
				\draw[][-] [domain=0:6,smooth] plot ({\x}, {8*((1+\x^2.5)/(1+\x^2.5+\x^3.9)^.64)-7});
				\draw[](5,.68) node[anchor=north east ]{\tiny{$h(\tau)$}};
				\draw[](.55,1.2)--(.55,0) node[anchor=north]{\tiny{$\tau_1$}};
				\draw[](2,.47)--(2,0) node[anchor=north]{\tiny{$\tau_2$}};
				\end{tikzpicture}
			}
		\end{figure}
		Further, note that in this case $h$ is increasing near $0$ which leads that first positive critical point of $h$, i.e., $\tau_1$ is the local maximum for $h$ and $h(\tau_1)>1$. Further, as $\lim\limits_{\tau\to\infty}h(\tau)=1$, the second root of $g$ i.e.,  $\tau_2$ becomes the 2nd and last critical point of $h$ and $h(\tau_2)<1.$ Therefore, in this case $\tau_{min}=\tau_2>0$ is the minimum point of $h$ with $h(\tau_{min})<1.$
		\\
		(iii) If $\alpha>p$, then $sp<N<2sp$ and $g(0)>0$. We see that $g$ is increasing in $\left(0,\left(p/p^*_s\right)^{\frac{1}{\alpha-p}}\right)$ and decreasing in $\left(\left(p/p^*_s\right)^{\frac{1}{\alpha-p}},\infty\right)$. This together with the fact  $\lim\limits_{\tau\to\infty}g(\tau)=-\infty$ leads that there exists a unique $\tau>0$ such that $g(\tau)=0$.
		\begin{figure}[h]
			\subfigure[The case $2(iii)$] {
				\begin{tikzpicture}[scale=.8]
				\draw[->][] (-.25,0)--(4,0) node[anchor=west] {\tiny{$\tau$}};
				\draw[->][] (0,-.25)--(0,3) node[anchor=east] {\tiny{$y$}};

				\draw[][-] [domain=0:1.2] plot ({\x}, {2+2*\x^1.1-3.1*\x^3});
				\draw[](2,1) node[anchor=north east ]{\tiny{$g(\tau)$}};
				\end{tikzpicture}
				
				\begin{tikzpicture}[scale=.8]
				\draw[->][] (-.25,0)--(7,0) node[anchor=west] {\tiny{$\tau$}};
				\draw[->][] (0,-.25)--(0,2) node[anchor=east] {\tiny{$y$}};
				\draw[](6,1)--(0,1) node[anchor=east] {\tiny{$y=1$}};

				\draw[][-] [domain=0:5,smooth] plot ({\x}, {10*((1+\x^1.5)/(1+\x^1.5+\x^3)^.5)-9});
				\draw[](4,2.15) node[anchor=north east ]{\tiny{$h(\tau)$}};
				\draw[](.99,2.53)--(.99,0) node[anchor=north]{\tiny{$\tau_1$}};
				\end{tikzpicture}
			}
		\end{figure}
		
		Since in this case, $h$ is increasing near $0$, so at $\tau$, $h$ attains the maximum with $h(\tau)>1.$ Hence, $h$ has no other critical point and therefore, $h(\tau_{min})=h(0)$.
		
		\medskip
		
		\noindent\textbf{\textit{Case 3:}} If $\beta>p.$
		
		If $1<\alpha\leq p$, then $g(1)=\alpha-\beta\leq 0$. Since $g(0)>0$, there is a $\tau\in(0,1]$ such that $g(\tau)=0.$ If $\alpha>p$ and $\alpha>\beta$, then $g(1)>0$ and $\lim\limits_{\tau\to\infty}g(\tau)=-\infty$. Thus there exists $\tau\in(1,\infty)$ such that $g(\tau)=0.$ If $\alpha>p$ and $\alpha\leq\beta$, then $g(1)\leq0$. As $g(0)>0$, thus there exists $\tau\in(0,1]$ such that $g(\tau)=0.$
		Next we analyse $\tau_{min}$ in case 3 in the following three subcases.
		
		\smallskip
		
		(i) $\beta>p$ and $\alpha>p$.
		
		Observe that in this case we have
		\begin{equation}\label{alb-1}\beta<p^*_s-p \quad\text{and}\quad \alpha<p^*_s-p.\end{equation}
		Hence, without loss of generality we can assume $\alpha\geq\beta$.
		
		Claim 1: $g(\tau)>0$ for $\tau\in[0,1)$. Indeed, using \eqref{alb-1}, $\tau\in[0,1)$ implies $\tau^\alpha, \tau^{\beta}>\tau^{p^*_s-p}$. Therefore,
		\begin{align*}
		g(\tau)&>p^*_s+\alpha\tau^{\beta}-\beta\tau^{\beta-p}-p^*_s\tau^{\beta}\\
		&=p^*_s+(\alpha-p^*_s)\tau^\beta-\beta\tau^{\beta-p}\\
		&>p^*+\alpha-p^*_s-\beta\tau^{\beta-p} \quad \text{(as $\alpha<p^*_s$ and $\tau^\beta<1$)}\\
		&=\alpha-\beta\tau^{\beta-p}>0,
		\end{align*}
		where in the last inequality we have used the fact that $\tau^{\beta-p}<1\Longrightarrow  \tau^{\beta-p}<\beta\leq\alpha$. This proves the claim 1.
		
		Claim 2: $g$ is monotonically decreasing for $\tau\geq 1$. Indeed, $\tau\geq 1$ implies $\tau^\alpha\geq\tau^p$. Therefore, using \eqref{alb-1} we have
		\begin{align*}
		g^{\prime}(\tau)&=\tau^{\beta-p-1}\big[\alpha\beta\tau^{p}-p^*_s(p^*_s-p)\tau^{\alpha}-\beta(\beta-p)\big]\\
		&\leq\tau^{\beta-p-1}\big[(\alpha\beta-p^*_s(p^*_s-p))\tau^\alpha-\beta(\beta-p)\big]\\
		&\leq \tau^{\beta-p-1}\big[(p^*_s-p)(\beta-p^*_s)\tau^\alpha-\beta(\beta-p)\big]\\
		&<0.
		\end{align*}
		This proves claim 2. Also observe that $g(1)\geq 0$ and $g(\tau)\to -\infty$ as $\tau\to\infty$. Combining these facts along with claim 1 and 2 above proves that $g$ has only one root say $\tau$ in $(0,\infty)$, which in turn implies $h$ has only one critical point $\tau$ in $(0,\infty)$. Since $\beta>p$ implies $h$ is increasing near $0$, so at $\tau$, $h$ attains the maximum with $h(\tau)>1$. Combining this with $\lim_{\tau\to\infty}h(\tau)=1$ proves that $h(\tau_{min})=h(0)=1$, i.e, $\tau_{min}=0$.
		
		\medskip
		
		(ii) $\beta>p$ and $\alpha<p$.
		
		In this case $g(0)>0$, $g(1)<0$ and we claim $g$ is strictly decreasing in $(0,1)$. Indeed, $\alpha<p \Longrightarrow \tau^p<\tau^\alpha$ for $\tau\in(0,1)$. Also $\beta>p\Longrightarrow \alpha<p^*_s-p$. Therefore,
		\begin{align*}
		g^{\prime}(\tau)&=\tau^{\beta-p-1}\big[\alpha\beta\tau^{p}-p^*_s(p^*_s-p)\tau^{\alpha}-\beta(\beta-p)\big]\\
		&<\tau^{\beta-p-1}\big[(p^*_s-p)(\beta-p^*_s)\tau^\alpha-\beta(\beta-p)\big]\\
		&<0.
		\end{align*}
		Claim: $g$ has only one critical point in $(1,\infty)$.
		Indeed,
		$$g'(\tau)=\tau^{\beta-p-1}g_1(\tau), \quad\text{where}\quad g_1(\tau):=\alpha\beta\tau^{p}-p^*_s(p^*_s-p)\tau^{\alpha}-\beta(\beta-p).$$
		So to prove that $g$ has only critical point in $(1,\infty)$, it's enough to show that
		$g_1$ has only one root in $(1,\infty)$. Observe that, $g_1(0)<0$, $\lim_{\tau\to\infty}g_1(\tau)=\infty$ and a straight forward computation yields that $g_1$ is a decreasing function in $\big(0, (\frac{p^*_s(p^*_s-p)}{p\beta})^\frac{1}{p-\alpha}\big)$ and $g_1$ is an increasing function in $\big((\frac{p^*_s(p^*_s-p)}{p\beta})^\frac{1}{p-\alpha}, \infty\big)$. Thus, $g_1$ has only one root. Hence the claim follows. Next, we observe that $\alpha<p\Longrightarrow \beta>p^*_s-p$ and therefore, $\lim_{\tau\to\infty}g(\tau)=\infty.$
		\begin{figure}[h]
			\subfigure[The case $3(ii)$]{
				\begin{tikzpicture}[scale=.8]
				\draw[->][] (-.25,0)--(4,0) node[anchor=west] {\tiny{$\tau$}};
				\draw[->][] (0,-2)--(0,2.5) node[anchor=east] {\tiny{$y$}};

				\draw[][-] [domain=0:1.7] plot ({\x}, {1.05*\x^3.5-1.56*\x^1.5-1.21});
				\draw[](2.8,1) node[anchor=south east]{\tiny{$g_1(\tau)$}};
				\draw[](.8,-1.82)--(.8,0) node[anchor=south]{\tiny{$\tau_1$}};
				\end{tikzpicture}
				
				\begin{tikzpicture}[scale=.7]
				\draw[->][] (-.25,0)--(4,0) node[anchor=west] {\tiny{$\tau$}};
				\draw[->][] (0,-.25)--(0,2) node[anchor=east] {\tiny{$y$}};
				
				\draw[][-] [domain=0:3.5] plot ({\x}, {1.1+1.1*\x^2-3.1*\x^1.1});
				\draw[](3.35,2) node[anchor=north east ]{\tiny{$g(\tau)$}};
				\draw[](1.1,0) node[anchor=south east ]{\tiny{$\tau_1$}};
				\draw[](2.9,0) node[anchor=south east ]{\tiny{$\tau_2$}};
				\end{tikzpicture}
			}
		\end{figure}
		Combining all the above observations and claim, it follows that $g$ has only one critical point in $(0,\infty)$ and two roots $\tau_1$, $\tau_2$ with $\tau_1\in(0,1)$ and $\tau_2\in(1,\infty)$. Hence, $h$ has exactly two critical points $\tau_1, \tau_2$. Since $h$ is increasing near $0$  leads to the conclusion that first positive critical point of $h$, i.e., $\tau_1$ is the local maximum for $h$ and $h(\tau_1)>1$ and since, $\lim\limits_{\tau\to\infty}h(\tau)=1$, at the second critical point of $h$ i.e., at $\tau_2$ we have  $h(\tau_2)<1.$ Therefore, in this case $\tau_{min}=\tau_2>0$ is the minimum point of $h$ with $h(\tau_{min})<1.$
		\begin{figure}
			\subfigure[The case $3(ii)$]{
				\begin{tikzpicture}[scale=.85]
				\draw[->][] (-.25,0)--(7,0) node[anchor=west] {\tiny{$\tau$}};
				\draw[->][] (0,-.25)--(0,2) node[anchor=east] {\tiny{$y$}};
				\draw[](7,1)--(0,1) node[anchor=east] {\tiny{$h(0)=1$}};
				
				\draw[][-] [domain=0:6,smooth] plot ({\x}, {8*((1+\x^2.5)/(1+\x^2.5+\x^3.9)^.64)-7});
				\draw[](5,.68) node[anchor=north east ]{\tiny{$h(\tau)$}};
				\draw[](.55,1.2)--(.55,0) node[anchor=north]{\tiny{$\tau_1$}};
				\draw[](2,.47)--(2,0) node[anchor=north]{\tiny{$\tau_2$}};
				\end{tikzpicture}
			}
		\end{figure}
		
		\medskip
		
		(iii) $\beta>p$, $\alpha=p$.
		
		In this case, $g(0)>0$ and $\alpha=p\Longrightarrow \beta=p^*_s-p$. Therefore,
		\begin{align*}
		g^{\prime}(\tau)&=\tau^{\beta-p-1}\big[\alpha\beta\tau^{p}-p^*_s(p^*_s-p)\tau^{\alpha}-\beta(\beta-p)\big]\\
		&=\tau^{\beta-p-1}\big[(\alpha-p^*_s)\beta\tau^\alpha-\beta(\beta-p)\big]<0,
		\end{align*}
		i.e., $g$ is a strictly decreasing function. Also, observe that $\lim_{\tau\to\infty}g(\tau)=-\infty$. Hence, $g$ has only one root in $(0,\infty)$, i.e., $h$ has only critical point $\tau$ in $(0,\infty)$. Since $\beta>p$ implies $h$ is increasing near $0$, so at $\tau$, $h$ attains the maximum with $h(\tau)>1$. Combining this with $\lim_{\tau\to\infty}h(\tau)=1$ proves that $h(\tau_{min})=h(0)=1$, i.e, $\tau_{min}=0$.
		\begin{figure}[h]
			\subfigure[The case $3(iii)$]{
				\begin{tikzpicture}[scale=1]
				\draw[->][] (-.25,0)--(4,0) node[anchor=west] {\tiny{$\tau$}};
				\draw[->][] (0,-.25)--(0,2.5) node[anchor=east] {\tiny{$y$}};
				
				\draw[][-] [domain=0:1.2] plot ({\x}, {1.5-\x^1.5-\x^.9});
				\draw[](1.2,.51) node[anchor=south east]{\tiny{$g(\tau)$}};
				\end{tikzpicture}
				
				\begin{tikzpicture}[scale=.8]
				\draw[->][] (-.25,0)--(7,0) node[anchor=west] {\tiny{$\tau$}};
				\draw[->][] (0,-.25)--(0,2) node[anchor=east] {\tiny{$y$}};
				\draw[](6,1)--(0,1) node[anchor=east] {\tiny{$y=1$}};

				\draw[][-] [domain=0:5,smooth] plot ({\x}, {10*((1+\x^1.5)/(1+\x^1.5+\x^3)^.5)-9});
				\draw[](4,2.15) node[anchor=north east ]{\tiny{$h(\tau)$}};
				\draw[](.99,2.53)--(.99,0) node[anchor=north]{\tiny{$\tau_1$}};
				\end{tikzpicture}
			}
		\end{figure}
		
	\end{proof}

	In order to prove Theorem~\ref{main-gr1}, next we introduce an auxiliary system of equations with a positive parameter $\eta$,
	
	\begin{equation}
	\tag{$\mathcal S_\eta$}\label{system with eta}
	\begin{cases}
	(-\Delta_p)^s u =\eta|u|^{p^*_s-2}u+ \frac{\alpha}{p_s^*}|u|^{\alpha-2}u|v|^{\beta}\;\;\text{in}\;\rn,\\
	(-\Delta_p)^s v =|v|^{p^*_s-2}v+ \frac{\beta}{p_s^*}|v|^{\beta-2}v|u|^{\alpha}\;\;\text{in}\;\rn,
	\\
	u,\;v\in\dot{W}^{s,p}(\rn).
	\end{cases}
	\end{equation}
	We define the following minimization problem associated to \eqref{system with eta}:
	
	\begin{equation*}\label{best constant gamma}
	S_{\eta,\alpha,\beta}:=\inf_{\substack{(u,v)\in X,\\
			(u,v)\neq0}}\frac{\|u\|_{\dot{W}^{s,p}}^p+\|v\|_{\dot{W}^{s,p}}^p}{\left(\displaystyle\int_{\rn}\left(\eta|u|^{p^*_s}+|v|^{p^*_s}+|u|^{\alpha}|v|^{\beta}\right)dx\right)^{\frac{p}{p^*_s}}}.
	\end{equation*}
	Similarly for $\tau>0$, we define $$f_\eta(\tau):=\frac{1+\tau^p}{\left(\eta+\tau^\beta+\tau^{p^*_s}\right)^{p/{p^*_s}}},\quad  f_\eta(\tau_{min}^*)=\min_{\substack{\tau\geq 0}} f_\eta(\tau).$$
	
	Proceeding as in the proof of Lemma~\ref{l:g function}, we find $\epsilon\in(0,1)$ small such that $\tau_{min}^*(\eta), \lambda^*(\eta), \mu^*(\eta)$ are unique for $\eta\in(1-\epsilon, 1+\epsilon)$ and $\tau_{min}^*(\eta)$ satisfies
	$$\tau^{p-1}\left(\eta p^*_s+\alpha\tau^{\beta}-\beta\tau^{\beta-p}-p^*_s\tau^{p^*_s-p}\right)=0.$$
	Moreover, $\tau_{min}^*(\eta), \lambda^*(\eta), \mu^*(\eta)$ are $C^1$ for $\eta\in(1-\epsilon, 1+\epsilon)$ and $\epsilon>0$ small.
	Indeed, if we denote
	$$F(\eta,\tau)=\eta p^*_s+\alpha\tau^{\beta}-\beta\tau^{\beta-p}-p^*_s\tau^{p^*_s-p}.$$
	Then, $$\frac{\partial F}{\partial \eta}=\tau^{\beta-p-1}\big[\alpha\beta\tau^p-p^*_s(p^*_s-p)\tau^\alpha-\beta(\beta-p)\big].$$
	
	Since $\tau_{min}$ is the minimum of $h$, direct computation yields $g(\tau_{min})=0$, $g'(\tau_{min})>0$. Therefore, $F(1,\tau_{min})=0$, $\frac{\partial F}{\partial \eta}(1,\tau_{min})>0$. Consequently, by implicit function theorem, we obtain that $\tau_{min}^*(\eta), \lambda^*(\eta), \mu^*(\eta)$ are $C^1$ for $\eta\in(1-\epsilon, 1+\epsilon)$.
	
	\medskip
	
	\medskip
	
	\noindent{\bf Proof of Theorem~\ref{main-gr1}}: Let $(u_0,v_0)$ is a ground state solution of \eqref{MAT1}. First, we claim that
	\begin{equation}\label{1:Lp* norm equal}
	\int_{\rn}|u_0|^{p^*_s}dx=\lambda^{p^*_s}\int_{\rn}|U|^{p^*_s}dx.
	\end{equation}
	In order to prove this, we define the following min-max problem associated to \eqref{system with eta}
	\begin{equation*}
	B(\eta):=\inf_{\substack{(u,v)\in X\setminus\{0\}}}\max_{\substack{t>0}}E_{\eta}(tu,tv),
	\end{equation*}
	where
	$$
	E_{\eta}(u,v):=\frac{1}{p}\left(\|u\|_{\dot{W}^{s,p}}^p+\|v\|_{\dot{W}^{s,p}}^p\right)-\frac{1}{p^*_s}\int_{\rn}\left(\eta\left(u^+\right)^{p^*_s}+\left(v^+\right)^{p^*_s}+\left(u^+\right)^{\alpha}\left(v^+\right)^{\beta}\right)dx.
	$$	
	Observe that there exists $t(\eta)>0$ such that $\max_{\substack{t>0}}E_{\eta}(tu_0,tv_0)=E_{\eta}(t(\eta)u_0,t(\eta)v_0)$ and moreover, $t(\eta)$ satisfies $H(\eta,t(\eta))=0$, where $H(\eta,t)=t^{p^*_s-p}\left(\eta G+D\right)-C$ with
	$$
	C:=\|u_0\|_{\dot{W}^{s,p}}^p+\|v_0\|_{\dot{W}^{s,p}}^p,\quad D:=\int_{\rn}\left(|v_0|^{p^*_s}+|u_0|^{\alpha}|v_0|^{\beta}\right)dx\quad \text{ and }\,G:=\int_{\rn}|u_0|^{p^*_s}dx.
	$$
	As $(u_0,v_0)$ is a least energy solution of \eqref{MAT1}, then
	$$
	H(1,1)=0,\quad\frac{ \partial H}{\partial t}(1,1)>0\quad \text{ and }\,H(\eta,t(\eta))=0.
	$$
	Thus, by the implicit function theorem, there exists $\epsilon>0$ such that $t(\eta):(1-\epsilon,1+\epsilon)\to\re$ is $C^1$ and
	$$
	t^{\prime}(\eta)=-\frac{\frac{ \partial H}{\partial\eta}}{\frac{ \partial H}{\partial t}}\Bigg|_{\eta=1=t}=-\frac{ G}{(p^*_s-p)(G+D)}.
	$$
	By Taylor expansion, we also have $t(\eta)=1+t^{\prime}(1)(\eta-1)+O\left(|\eta-1|^2\right)$ and thus
	$$
	t^p(\eta)=1+pt^{\prime}(1)(\eta-1)+O\left(|\eta-1|^2\right).
	$$
	Since, $H(1,1)=0$ implies $C=G+D$, and $H(\eta,t(\eta))=0$ implies $C=t(\eta)^{p^*_s-p}\left(\eta G+D\right)$.
	Therefore by definition of $B(\eta)$ and above we obtain
	\begin{multline}\label{upper bdd of B(gamma)}
	B(\eta)\leq E_{\eta}(t(\eta)u_0,t(\eta)v_0)=\frac{t(\eta)^{p}}{p}C-\frac{t(\eta)^{p^*_s}}{p^*_s}\left(\eta G+D\right)=t(\eta)^p\frac{s}{N}C=t(\eta)^p B(1)
	\\
	=B(1)-\frac{p\;G B(1)}{(p^*_s-p)(G+D)}(\eta-1)+O\left(|\eta-1|^2\right).
	\end{multline}
	Now, let us compute $B(1)$ from the definition
	
	\begin{align*}
	B(1)&=\inf_{\substack{(u,v)\in X}}E_1(t_{\text{max}}u,t_{\text{max}}v), \quad\text{ where }t_{\text{max}}^{p^*_s-p}=\frac{\|u\|_{\dot{W}^{s,p}}^p+\|v\|_{\dot{W}^{s,p}}^p}{\displaystyle\int_{\rn}\left(|u|^{p^*_s}+|v|^{p^*_s}+|u|^{\alpha}|v|^{\beta}\right)dx}
	\\
	&=\frac{s}{N} \inf_{\substack{(u,v)\in X}}\left(\frac{\|u\|_{\dot{W}^{s,p}}^p+\|v\|_{\dot{W}^{s,p}}^p}{\left(\displaystyle\int_{\rn}\left(|u|^{p^*_s}+|v|^{p^*_s}+|u|^{\alpha}|v|^{\beta}\right)dx\right)^{\frac{p}{p^*_s}}}\right)^{\frac{p^*_s}{p^*_s-p}}\\
	&=\frac{s}{N} S_{\alpha,\beta}^{\frac{p^*_s}{p^*_s-p}}
	\\
	&=\frac{s}{N}\left(\frac{\|u_0\|_{\dot{W}^{s,p}}^p+\|v_0\|_{\dot{W}^{s,p}}^p}{\left(\displaystyle\int_{\rn}\left(|u_0|^{p^*_s}+|v_0|^{p^*_s}+|u_0|^{\alpha}|v_0|^{\beta}\right)dx\right)^{\frac{p}{p^*_s}}}\right)^{\frac{p^*_s}{p^*_s-p}}\\
	&=\frac{s}{N}(G+D).
	\end{align*}
	Using this in \eqref{upper bdd of B(gamma)}, we obtain
	\begin{equation*}
	B(\eta)\leq B(1)-\frac{G}{p^*_s}(\eta-1)+O\left(|\eta-1|^2\right).
	\end{equation*}	
	Therefore, we have
	\begin{equation*}
	\frac{B(\eta)-B(1)}{\eta-1}
	\begin{cases}
	\leq-\frac{G}{p^*_s}+O\left(|\eta-1|\right) \text{ if }\eta>1,\\
	\geq-\frac{G}{p^*_s}+O\left(|\eta-1|\right) \text{ if }\eta<1.
	\end{cases}
	\end{equation*}	
	This implies that
	\begin{equation}\label{1:derivative of B(1)}
	B^{\prime}(1)=-\frac{G}{p^*_s}=-\frac{1}{p^*_s}\int_{\rn}|u_0|^{p^*_s}dx.
	\end{equation}
	Arguing similarly as in the proof of Lemma~\ref{basic lemma}, it follows that $S_{\eta,\alpha,\beta}$ is attained by $(tU,\tau(\eta)tU)$. Therefore,
	\begin{eqnarray*}
		B(\eta)&=\frac{s}{N}\left(\frac{1+\tau(\eta)^p}{\left(\eta+\tau(\eta)^\beta+\tau(\eta)^{p^*_s}\right)^{\frac{p}{p^*_s}}}\right)^{\frac{p^*_s}{p^*_s-p}}\displaystyle\int_{\rn}|U|^{p^*_s}dx\\
		&=\frac{s}{N}\frac{\left(1+\tau(\eta)^p\right)^{\frac{ n}{sp}}}{\left(\eta+\tau(\eta)^\beta+\tau(\eta)^{p^*_s}\right)^{\frac{n}{sp}-1}}\displaystyle\int_{\rn}|U|^{p^*_s}dx.
	\end{eqnarray*}
	Then, from a simple computation it follows
	
	\begin{multline*}
	B^{\prime}(\eta)=\frac{\left(1+\tau(\eta)^p\right)^{\frac{ n}{sp}-1}}{p^*_s\left(\eta+\tau(\eta)^\beta+\tau(\gamma)^{p^*_s}\right)^{\frac{n}{sp}}}\\
	\times\left[\tau^{\prime}(\eta)\tau(\eta)^{p-1}\left(\eta p^*_s+\alpha\tau(\eta)^{\beta}-\beta\tau(\eta)^{\beta-p}-p^*_s\tau(\eta)^{p^*_s-p}\right)-1-\tau(\eta)^{p}\right]\int_{\rn}|U|^{p^*_s}dx.
	\end{multline*}
	Note that for $\eta=1$,  $\tau(1)$ satisfies the equation $g(\tau)=0$, where $g(\tau)$ is given by \eqref{def: g function}, thus we obtain $\tau(1)=\tau_{min}$. Consequently,
	
	\begin{equation}\label{2:derivative of B(1)}
	B^{\prime}(1)=-\frac{1}{p^*_s}\left(\frac{1+\tau_{min}^p}{1+\tau_{min}^\beta+\tau_{min}^{p^*_s}}\right)^{\frac{p^*_s}{p^*_s-p}}\int_{\rn}|U|^{p^*_s}dx=-\frac{\lambda^{p^*_s}}{p^*_s}\int_{\rn}|U|^{p^*_s}dx.
	\end{equation}
	Combining \eqref{1:derivative of B(1)} and \eqref{2:derivative of B(1)} we conclude \eqref{1:Lp* norm equal}.
	By a similar argument as in the proof of \eqref{1:Lp* norm equal}, we show that
	
	\begin{equation}\label{2:Lp* norm equal}
	\int_{\rn}|v_0|^{p^*_s}dx=\tau_{min}^{p^*_s}\lambda^{p^*_s}\int_{\rn}|U|^{p^*_s}dx,\quad\int_{\rn}|u_0|^{\alpha}|v_0|^{\beta}dx=\tau_{min}^{\beta}\lambda^{p^*_s}\int_{\rn}|U|^{p^*_s}dx.
	\end{equation}
	Therefore, by \eqref{1:Lp* norm equal} and \eqref{2:Lp* norm equal}, we obtain
	\begin{equation*}\label{3:Lp* norm equal}
	\int_{\rn}|u_0|^{\alpha}|v_0|^{\beta}dx=\tau_{min}^{\beta}\int_{\rn}|u_0|^{p^*_s},\quad\int_{\rn}|u_0|^{\alpha}|v_0|^{\beta}dx=\tau_{min}^{\beta-p^*_s}\int_{\rn}|v_0|^{p^*_s}dx.
	\end{equation*}
	Again, since $(\lambda U, \mu U)$  solves the problem \eqref{MAT1}, we get
	
	\begin{equation}\label{1: algbebric reltn}
	\lambda^{p^*_s-p}+\frac{\alpha}{p^*_s}\mu^{\beta}\lambda^{\alpha-p}=1= \mu^{p^*_s-p}+\frac{\beta}{p^*_s}\mu^{\beta-p}\lambda^{\alpha}.
	\end{equation}
	Now define $(u_1,v_1):=\left(\frac{u_0}{\lambda},\frac{v_0}{\mu}  \right)$. Using \eqref{1:Lp* norm equal}, \eqref{2:Lp* norm equal} and \eqref{1: algbebric reltn} we have
	
	\begin{multline*}
	\|u_1\|_{\dot{W}^{s,p}}^p=\lambda^{-p}\|u_0\|_{\dot{W}^{s,p}}^p=\lambda^{-p}\int_{\rn}\left(|u_0|^{p^*_s}+\frac{\alpha}{p^*_s}|u_0|^{\alpha}|v_0|^{\beta} \right)dx
	\\
	=\lambda^{-p}\left(\lambda^{p^*_s}+\frac{\alpha}{p^*_s}\mu^{\beta}\lambda^{\alpha}\right)\int_{\rn}|U|^{p^*_s}dx=\|U\|_{\dot{W}^{s,p}}^p.
	\end{multline*}
	Similarly, we obtain $\|v_1\|_{\dot{W}^{s,p}}^p=\|U\|_{\dot{W}^{s,p}}^p$. Therefore, we have
	
	\begin{equation}\label{seminorms are equal}
	\|u_1\|_{\dot{W}^{s,p}}^p=\|U\|_{\dot{W}^{s,p}}^p=\|v_1\|_{\dot{W}^{s,p}}^p.
	\end{equation}
	Also, by \eqref{1:Lp* norm equal}
	
	\begin{equation}\label{Lp* norm of u1 U equal}
	\int_{\rn}|u_1|^{p^*_s}dx=\int_{\rn}|U|^{p^*_s}dx,
	\end{equation}
	and by \eqref{2:Lp* norm equal},
	
	\begin{equation}\label{Lp* norm of v1 U equal}
	\int_{\rn}|v_1|^{p^*_s}dx=\int_{\rn}|U|^{p^*_s}dx.
	\end{equation}
	Thus, from \eqref{seminorms are equal} and \eqref{Lp* norm of u1 U equal}, we conclude $u_1$ achieves $\mathcal S$. Further, from \eqref{best constant alpha+beta}, \eqref{seminorms are equal} and \eqref{Lp* norm of v1 U equal} implies $v_1$ also achieves $\mathcal S$ in \eqref{best constant alpha+beta}. This completes the proof.
	\smallskip
	
	\section{Proof of Theorem~\ref{minimizer value}, \ref{gamma near 0}, and \ref{in ball}}
	In this section we study the system \eqref{MAT2} which we introduced in the introduction. For the reader's convenience, we recall \eqref{MAT2} below
	\begin{equation*}
	\tag{$\tilde{\mathcal S_{\gamma}}$}
	\begin{cases}
	(-\Delta_p)^s u =|u|^{p^*_s-2}u+ \frac{\alpha\gamma}{p_s^*}|u|^{\alpha-2}u|v|^{\beta}\;\;\text{in}\;\rn,\vspace{.2cm}\\
	(-\Delta_p)^s v =|v|^{p^*_s-2}v+ \frac{\beta\gamma}{p_s^*}|v|^{\beta-2}v|u|^{\alpha}\;\;\text{in}\;\rn,
	\\
	u,\;v\in\dot{W}^{s,p}(\rn).
	\end{cases}
	\end{equation*}
	We also recall that (see \eqref{J-functional}) the energy functional associated to the above system is
	$$\mathcal J(u,v)=\frac{1}{p}\left(\|u\|_{\dot{W}^{s,p}}^p+\|v\|_{\dot{W}^{s,p}}^p\right)-\frac{1}{p^*_s}\int_{\rn}\left(|u|^{p^*_s}+|v|^{p^*_s}+\gamma|u|^{\alpha}|v|^{\beta}\right)dx, \quad(u,v)\in X.$$
	and the Nehari manifold \eqref{Nehari}
	\begin{multline}
	\mathcal N=\left\{(u,v)\in X:u\neq0,\;v\neq0,\;\|u\|_{\dot{W}^{s,p}}^p=\int_{\rn}\left(|u|^{p^*_s}+\frac{ \alpha\gamma}{p^*_s}|u|^{\alpha}|v|^{\beta}\right)dx\right.,\\
	\left. \|v\|_{\dot{W}^{s,p}}^p=\int_{\rn}\left(|v|^{p^*_s}+\frac{ \beta\gamma}{p^*_s}|u|^{\alpha}|v|^{\beta}\right)dx\right\}.\nonumber
	\end{multline}
	Therefore, it follows
	\begin{multline*}
	A=\inf\limits_{(u,v)\in\mathcal N}\mathcal{J}(u,v)
	=\inf\limits_{(u,v)\in\mathcal N}\frac{s}{N} \left(\|u\|_{\dot{W}^{s,p}}^p+\|v\|_{\dot{W}^{s,p}}^p\right)\\
	=\inf\limits_{(u,v)\in\mathcal N}\frac{s}{N}\int_{\rn}\left(|u|^{p^*_s}+|v|^{p^*_s}+\gamma|u|^{\alpha}|v|^{\beta}\right)dx.
	\end{multline*} 
	
	\begin{proposition}\label{algebraic propoistion for alpha beta bigger than p}
		Assume that $c,\;d\in\re$ satisfy
		\begin{equation}\label{algebraic eq2}
		\begin{cases}
		c^{\frac{p^*_s-p}{p}}+\frac{\alpha\gamma}{p^*_s}c^{\frac{\alpha-p}{p}}d^{\frac{\beta}{p}}\geq1,\\
		d^{\frac{p^*_s-p}{p}}+\frac{\beta\gamma}{p^*_s}d^{\frac{\beta-p}{p}}c^{\frac{\alpha}{p}}\geq1,\\
		c,d>0.
		\end{cases}
		\end{equation}
		If $\frac{N}{2s}<p<\frac{N}{s}$, $\alpha,\beta>p$ and \eqref{condition on gamma: 1} holds then $c+d\geq k+\ell$, where $k,\ell\in\re$ satisfy \eqref{algebraic eq1}.
	\end{proposition}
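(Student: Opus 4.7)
The plan is to regard \eqref{algebraic eq2} as defining a closed constraint region
$$K:=\{(c,d)\in(0,\infty)^2\;:\;G_1(c,d)\geq 1,\;G_2(c,d)\geq 1\},$$
where $G_1,G_2$ denote the two left-hand sides in \eqref{algebraic eq2}, and to minimize the linear functional $(c,d)\mapsto c+d$ over $K$. Under the assumption $\alpha,\beta>p$, every exponent appearing in $G_1,G_2$ is strictly positive; hence $G_1,G_2$ are continuous on $[0,\infty)^2$, vanish on the coordinate axes, and are strictly increasing in each variable separately. Consequently the two boundary curves $\Gamma_i:=\{G_i=1\}$ are monotone decreasing graphs and $K$ lies ``above'' both. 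This monotonicity is the central structural feature to exploit.

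First I would show the infimum of $c+d$ on $K$ is attained. Any sublevel set $K\cap\{c+d\leq M\}$ is $\ell^\infty$-bounded, and the fact that $G_1(c,d)\to 0$ as $c\to 0^+$ for $d$ in any bounded set (and symmetrically for $G_2$ in $d$) gives a uniform lower bound for the coordinates on this sublevel set. A minimizing sequence therefore subconverges in $(0,\infty)^2$ to some $(c_\ast,d_\ast)\in K$.

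Next I would prove that both constraints are active at $(c_\ast,d_\ast)$. If $G_1(c_\ast,d_\ast)>1$ and $G_2(c_\ast,d_\ast)>1$, continuity lets one decrease both coordinates slightly while remaining in $K$, contradicting minimality. If only one constraint is active, say $G_1=1$ with $G_2>1$, the implicit function theorem yields a $C^1$ parametrization $d=\psi(c)$ of $\Gamma_1$ near $(c_\ast,d_\ast)$, and minimality forces $\psi'(c_\ast)=-1$, i.e. $\partial_c G_1=\partial_d G_1$ at $(c_\ast,d_\ast)$. Substituting the explicit formulas for $\partial_c G_1,\partial_d G_1$ and combining the resulting identity with $G_1(c_\ast,d_\ast)=1$ and the strict inequality $G_2(c_\ast,d_\ast)>1$ should produce a quantitative lower bound on $\gamma$ that contradicts \eqref{condition on gamma: 1}. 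The precise shape of \eqref{condition on gamma: 1}, built from expressions of the type $\bigl(\tfrac{\alpha-p}{\beta-p}\bigr)^{(\beta-p)/p}$ and its symmetric counterpart, is tailored exactly so that any candidate minimum with only one active constraint is excluded; this quantitative matching is the main technical obstacle I anticipate.

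Once both constraints are forced to be active, $(c_\ast,d_\ast)$ solves \eqref{algebraic eq1}, so setting $(k,\ell):=(c_\ast,d_\ast)$ gives $c+d\geq c_\ast+d_\ast=k+\ell$ for every $(c,d)\in K$, which is the claim. Compatibility with the selection $k=k_0$ of \eqref{condition on k_0} then follows from strict monotonicity of $G_1,G_2$: the crossings of $\Gamma_1$ and $\Gamma_2$ can be ordered by the value of $k$, and under the $\gamma$-bound the sum $k+\ell$ is minimized at the smallest-$k$ crossing, so the $(k,\ell)$ produced by the minimization procedure coincides with $(k_0,\ell_0)$.
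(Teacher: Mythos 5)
Your outline reduces the proposition to a constrained minimization of $c+d$ over $K=\{G_1\geq 1,\,G_2\geq 1\}$, but it stops exactly where the content of the proposition lies: you never actually rule out a minimizer at which only one constraint is active. You acknowledge this yourself (``this quantitative matching is the main technical obstacle I anticipate''), and that step is not a routine verification one can defer --- it is the \emph{only} place where the hypothesis \eqref{condition on gamma: 1} enters, and without it the conclusion is false in general (for large $\gamma$ the curves $\Gamma_i$ can have points of tangency with lines $c+d=\mathrm{const.}$, and the minimum of $c+d$ on $K$ then sits strictly below the crossing value $k+\ell$). Concretely, your condition $\psi'(c_*)=-1$ is equivalent to saying that $c+d$, viewed as a function of the ratio $x=c/d$ along $\Gamma_1$, has a critical point; so what you must prove is precisely that $(c+d)^{(p_s^*-p)/p}=f_1(x):=(x+1)^{(p_s^*-p)/p}/\bigl(x^{(p_s^*-p)/p}+\tfrac{\alpha\gamma}{p_s^*}x^{(\alpha-p)/p}\bigr)$ is monotone decreasing on $\Gamma_1$ and the analogous $f_2$ is monotone increasing on $\Gamma_2$, which requires computing $f_i'$, locating the extremum of the bracketed factor $g_i$, and checking its sign against \eqref{condition on gamma: 1}. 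That computation is the entire proof, and it is missing.

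This is in fact how the paper argues, and its route is cleaner than yours because it never needs to discuss attainment or activity of constraints: substituting $y=c+d$, $x=c/d$ turns \eqref{algebraic eq2} into $y^{(p_s^*-p)/p}\geq\max\{f_1(x),f_2(x)\}$ and \eqref{algebraic eq1} into $y_0^{(p_s^*-p)/p}=f_1(x_0)=f_2(x_0)$; once $f_1\downarrow$ and $f_2\uparrow$ are established from \eqref{condition on gamma: 1}, one gets directly $y^{(p_s^*-p)/p}\geq\min_{x>0}\max\{f_1,f_2\}=y_0^{(p_s^*-p)/p}$, with no compactness or Lagrange-multiplier analysis. (The same monotonicity also gives uniqueness of the crossing, which disposes of your last paragraph about the selection $k=k_0$.) If you want to salvage your optimization framework, you must carry out the derivative computation for $g_1(x)=-\tfrac{p_s^*(p_s^*-p)}{\alpha\gamma}x^{\beta/p}+\beta x-\alpha+p$ and $g_2$, show $\max g_1\leq 0$ and $\min g_2\geq 0$ under \eqref{condition on gamma: 1}, and translate that back into the statement that $\psi'<-1$ everywhere on $\Gamma_1$ (and the reverse on $\Gamma_2$); at that point the optimization scaffolding becomes redundant.
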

	\begin{proof}
		We use the change of variables $y=c+d,\;x=c/d,\;y_0=k+\ell,\text{ and }x_0=k/\ell$ into \eqref{algebraic eq2} and \eqref{algebraic eq1}, we obtain
		
		$$
		y^{\frac{p^*_s-p}{p}}\geq\frac{(x+1)^{\frac{ p^*_s-p}{p}}}{x^{\frac{ p^*_s-p}{p}}+\frac{ \alpha\gamma}{p^*_s}x^{\frac{\alpha-p}{p}}}=:f_1(x),\;\; y_0^{\frac{ p^*_s-p}{p}}=f_1(x_0),
		$$
		
		$$
		y^{\frac{p^*_s-p}{p}}\geq\frac{(x+1)^{\frac{ p^*_s-p}{p}}}{1+\frac{ \beta\gamma}{p^*_s}x^{\frac{\alpha}{p}}}=:f_2(x),\;\; y_0^{\frac{ p^*_s-p}{p}}=f_2(x_0).
		$$
		
		Then, one has
		\begin{multline*}
		f_1^{\prime}(x)=\frac{\alpha\gamma(x+1)^{\frac{ p^*_s-2p}{p}}x^\frac{ \alpha-2p}{p}}{pp^*_s\left(x^{\frac{ p^*_s-p}{p}}+\frac{ \alpha\gamma}{p^*_s}x^{\frac{\alpha-p}{p}}\right)^2}\left[-\frac{p^*_s(p^*_s-p)}{\alpha\gamma}x^{\frac{\beta}{p}}+\beta x-\alpha+p\right]\\
		=:\frac{\alpha\gamma(x+1)^{\frac{ p^*_s-2p}{p}}x^\frac{ \alpha-2p}{p}}{pp^*_s\left(x^{\frac{ p^*_s-p}{p}}+\frac{ \alpha\gamma}{p^*_s}x^{\frac{\alpha-p}{p}}\right)^2}\;g_1(x),
		\end{multline*}
		\begin{multline*}
		f_2^{\prime}(x)=\frac{\beta\gamma(x+1)^{\frac{ p^*_s-2p}{p}}}{pp^*_s\left(1+\frac{ \beta\gamma}{p^*_s}x^{\frac{\alpha}{p}}\right)^2}\left[\frac{p^*_s(p^*_s-p)}{\beta\gamma}+(\beta-p)x^{\frac{\alpha}{p}}-\alpha x^{\frac{\alpha-p}{p}}\right]\\
		=:\frac{\beta\gamma(x+1)^{\frac{ p^*_s-2p}{p}}}{pp^*_s\left(1+\frac{ \beta\gamma}{p^*_s}x^{\frac{\alpha}{p}}\right)^2}\;g_2(x).
		\end{multline*}
		Hence, we obtain $x_1=\left(\frac{ p\alpha\gamma}{p^*_s(p^*_s-p)}\right)^{\frac{p}{\beta-p}}$ from $g_1^{\prime}(x)=0$ and similarly, for $g_2$ we have $x_2=\frac{\alpha-p}{\beta-p}$. Now using \eqref{condition on gamma: 1} we conclude that
		
		\begin{equation*}
		\max_{\substack{x>0}}g_1(x)=g_1(x_1)=\left(\frac{ p\alpha\gamma}{p^*_s(p^*_s-p)}\right)^{\frac{p}{\beta-p}}(\beta-p)-(\alpha-p)\leq0,
		\end{equation*}
		
		\begin{equation*}
		\min_{\substack{x>0}}g_2(x)=g_2(x_2)=\frac{p^*_s(p^*_s-p)}{\beta\gamma}-p\left(\frac{\alpha-p}{\beta-p}\right)^{\frac{\alpha-p}{p}}\geq0.
		\end{equation*}
		Therefore, we conclude that the function $f_1$ is decreasing in $(0,\infty)$ and on the other hand the function $f_2$ is increasing in $(0,\infty)$. Thus, we have
		
		\begin{multline*}
		y^{\frac{p^*_s-p}{p}}\geq\max\{f_1(x),\;f_2(x)\}\geq\min_{\substack{x>0}}\left(\max\{f_1(x),\;f_2(x)\}\right)
		\\
		=\min_{\substack{\{f_1=f_2\}}}\left(\max\{f_1(x),\;f_2(x)\}\right)=y_0^{\frac{p^*_s-p}{p}}.
		\end{multline*}
		Hence the result follows.	
	\end{proof}
	
	We define the functions
	
	\begin{equation}\label{algebraic eq3}
	\begin{cases}
	F_1(k,\ell):=k^{\frac{p^*_s-p}{p}}+\frac{\alpha\gamma}{p^*_s}k^{\frac{\alpha-p}{p}}\ell^{\frac{\beta}{p}}-1,\quad k>0,\;\ell\geq0,
	\\
	F_2(k,\ell):=\ell^{\frac{p^*_s-p}{p}}+\frac{\beta\gamma}{p^*_s}\ell^{\frac{\beta-p}{p}}k^{\frac{\alpha}{p}}-1,\quad k\geq0,\;\ell>0,
	\\
	\ell(k):=\left(\frac{p^*_s}{\alpha\gamma}\right)^{\frac{p}{\beta}}k^{\frac{p-\alpha}{\beta}}\left(1-k^{\frac{ p^*_s-p}{p}}\right)^{\frac{p}{\beta}},\quad 0<k\leq1,
	\\
	k(\ell):=\left(\frac{p^*_s}{\beta\gamma}\right)^{\frac{p}{\alpha}}\ell^{\frac{p-\beta}{\alpha}}\left(1-\ell^{\frac{ p^*_s-p}{p}}\right)^{\frac{p}{\alpha}},\quad 0<\ell\leq1.
	\end{cases}
	\end{equation}
	Then $F_1(k,\ell(k))=0$ and $F_2(k(\ell),\ell)=0.$
	
	\begin{lemma}\label{Sol for F1 F2}
		Assume that $\frac{2N}{N+2s}<p<\frac{N}{2s}$ and $\alpha,\;\beta<p$. Then
		\begin{equation}\label{F1=0 F2=0}
		F_1(k,\ell)=0,\;F_2(k,\ell)=0,\quad k,\,  \ell>0,
		\end{equation}
		has a solution $(k_0,\ell_0)$ such that  $F_2(k,\ell(k))<0$ for all $k\in(0,k_0)$, that is $(k_0,\ell_0)$ satisfies \eqref{condition on k_0}. Similarly, \eqref{F1=0 F2=0} has a solution $(k_1,\ell_1)$ such that $F_1(k(\ell),\ell)<0$ for all $\ell\in(0,\ell_1)$ that is $(k_1,\ell_1)$ satisfies \eqref{algebraic eq1} and
		$
		\ell_1=\min\{\ell:(k,\ell)\text{ satisfies }\eqref{algebraic eq1}\}.
		$
	\end{lemma}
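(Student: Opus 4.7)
The plan is to reduce the coupled system $F_1 = F_2 = 0$ to a one-dimensional root-finding problem along the curve that makes $F_1$ vanish. For $0 < k < 1$ set $\phi(k) := F_2(k, \ell(k))$; since $F_1(k, \ell(k)) \equiv 0$ by construction of $\ell(k)$, any zero of $\phi$ produces a simultaneous solution $(k, \ell(k))$ of \eqref{algebraic eq1}, and $\ell(k)$ is continuous and strictly positive on $(0,1)$, so $\phi$ is continuous there. The whole lemma is then a continuity/IVT argument, provided we pin down the boundary behavior of $\phi$.

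Next I would compute the two endpoint limits. As $k \to 0^+$, $\ell(k) \sim C_1\, k^{(p-\alpha)/\beta}$ with $(p-\alpha)/\beta > 0$, so $\ell(k) \to 0$ and $\ell(k)^{(p^*_s - p)/p} \to 0$. For the cross term $\ell(k)^{(\beta-p)/p} k^{\alpha/p}$, the total exponent of $k$ is
\[
\frac{(p-\alpha)(\beta-p)}{\beta p} + \frac{\alpha}{p} = \frac{(p-\alpha)(\beta-p) + \alpha\beta}{\beta p} = \frac{p(p^*_s - p)}{\beta p} = \frac{p^*_s - p}{\beta} > 0,
\]
using crucially $\alpha + \beta = p^*_s$. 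Hence the cross term also vanishes and $\phi(k) \to -1$. As $k \to 1^-$, from $1 - k^{(p^*_s - p)/p} \sim \frac{p^*_s - p}{p}(1-k)$ one gets $\ell(k) \sim C_2 (1-k)^{p/\beta} \to 0$, so $\ell(k)^{(p^*_s - p)/p} \to 0$; but since $\beta < p$ the exponent $(\beta-p)/\beta$ is negative, so $\ell(k)^{(\beta-p)/p} k^{\alpha/p} \sim C_3 (1-k)^{(\beta-p)/\beta} \to +\infty$, and therefore $\phi(k) \to +\infty$.

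By continuity and the Intermediate Value Theorem, the set $Z := \{k \in (0,1) : \phi(k) = 0\}$ is nonempty. Define $k_0 := \inf Z$. Because $\phi(k) \to -1$ at $0^+$ there is $\delta > 0$ with $\phi < 0$ on $(0, \delta)$, so $k_0 \geq \delta > 0$; continuity then forces $\phi(k_0) = 0$, and minimality of $k_0$ combined with continuity gives $\phi(k) < 0$ on $(0, k_0)$, which is the strict inequality claimed. Setting $\ell_0 := \ell(k_0) > 0$, the pair $(k_0, \ell_0)$ lies in \eqref{algebraic eq1}. For the minimality \eqref{condition on k_0}: any solution $(k, \ell)$ of \eqref{algebraic eq1} necessarily has $k < 1$ (since $F_1 = 0$ forces $k^{(p^*_s - p)/p} < 1$) and $\ell = \ell(k)$ by definition, so $k \in Z$ and hence $k \geq k_0$. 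The second statement about $(k_1, \ell_1)$ is established by the completely symmetric analysis of $\psi(\ell) := F_1(k(\ell), \ell)$ on $(0,1)$: the same exponent identity $\alpha + \beta = p^*_s$ gives $\psi(\ell) \to -1$ as $\ell \to 0^+$ and $\psi(\ell) \to +\infty$ as $\ell \to 1^-$, and one sets $\ell_1 := \inf\{\ell \in (0,1) : \psi(\ell) = 0\}$.

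The only genuinely nonobvious step is the exponent computation at $0^+$; the hypothesis $\alpha + \beta = p^*_s$ is exactly what collapses $(p-\alpha)(\beta - p) + \alpha\beta$ to $p(p^*_s - p)$ and makes the cross-term exponent $(p^*_s - p)/\beta$ strictly positive, which is what prevents $\phi$ from blowing up at $0$ and lets the IVT deliver a zero in the interior. Everything else is routine continuity together with the observation that $\beta < p$ is what drives $\phi \to +\infty$ at $k = 1^-$ (so the assumption $\alpha, \beta < p$ in the hypotheses is used in an essential way).
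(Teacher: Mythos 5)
Your proof is correct. The paper itself gives no argument here---it simply says the proof is ``exactly similar to'' Lemma~3.2 of Guo--Perera--Zou \cite{GuPeZo}---so your write-up supplies in full what the paper outsources, and it does so along the expected lines: restrict to the curve $\ell=\ell(k)$ on which $F_1$ vanishes, check the boundary behaviour of $\phi(k)=F_2(k,\ell(k))$, and apply the intermediate value theorem. The two computations that actually need care are both right: the identity $(p-\alpha)(\beta-p)+\alpha\beta=p(p^*_s-p)$ (using $\alpha+\beta=p^*_s$) makes the cross-term exponent $(p^*_s-p)/\beta>0$ so that $\phi(k)\to-1$ as $k\to0^+$, and $\beta<p$ forces $\phi(k)\to+\infty$ as $k\to1^-$; taking $k_0=\inf\{\phi=0\}$ then yields both the strict negativity on $(0,k_0)$ and, since any solution of \eqref{algebraic eq1} must have $k<1$ and $\ell=\ell(k)$, the minimality property \eqref{condition on k_0}. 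The case of $(k_1,\ell_1)$ is indeed symmetric under $\alpha\leftrightarrow\beta$.
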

	\begin{proof}
		The proof is exactly similar to \cite[Lemma~3.2]{GuPeZo}.
	\end{proof}
	
	\begin{lemma}
		Assume that $\frac{N}{N+2s}<p<\frac{N}{2s}$; $\alpha,\;\beta<p$ and \eqref{condition on gamma: 2} holds. Then $k_0+\ell_0<1$, where $(k_0,\ell_0)$ is same as in Lemma~\ref{Sol for F1 F2} and
		\begin{equation*}
		F_1(k(\ell),\ell)<0\quad\forall\, \ell\in(0,\ell_0),\qquad F_2(k,\ell(k))<0\quad\forall\, k\in(0,k_0).
		\end{equation*}
	\end{lemma}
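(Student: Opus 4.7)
The plan is to follow the same $x=k/\ell$, $y=k+\ell$ change of variables used in Proposition~\ref{algebraic propoistion for alpha beta bigger than p}, with the monotonicity roles of $f_1,f_2$ reversed because here $\alpha,\beta<p$ and $\gamma$ is taken large rather than small.

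First I would substitute $k=xy/(x+1)$, $\ell=y/(x+1)$ into the equations $F_1(k,\ell)=0$ and $F_2(k,\ell)=0$. A computation identical to the one in Proposition~\ref{algebraic propoistion for alpha beta bigger than p} shows that these become
$$y^{\frac{p_s^*-p}{p}}=f_1(x)=\frac{(x+1)^{\frac{p_s^*-p}{p}}}{x^{\frac{p_s^*-p}{p}}+\frac{\alpha\gamma}{p_s^*}x^{\frac{\alpha-p}{p}}},\qquad y^{\frac{p_s^*-p}{p}}=f_2(x)=\frac{(x+1)^{\frac{p_s^*-p}{p}}}{1+\frac{\beta\gamma}{p_s^*}x^{\frac{\alpha}{p}}},$$
respectively, with $f_i'(x)$ of the same sign as the auxiliary functions $g_1,g_2$ introduced there. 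Writing $x_0=k_0/\ell_0$, $y_0=k_0+\ell_0$, we have $y_0^{(p_s^*-p)/p}=f_1(x_0)=f_2(x_0)$, so the bound $k_0+\ell_0<1$ is equivalent to the common intersection value being strictly less than $1$.

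The next step is to analyse $g_1$ and $g_2$ in the regime $\alpha,\beta<p$, where the convexity/concavity is reversed compared to Proposition~\ref{algebraic propoistion for alpha beta bigger than p}. Since $\beta<p$, $g_1$ is now strictly convex on $(0,\infty)$ with a unique minimum at $x_1=\bigl(p\alpha\gamma/(p_s^*(p_s^*-p))\bigr)^{p/(\beta-p)}$ and minimum value $g_1(x_1)=(\beta-p)x_1+(p-\alpha)$; translating $g_1(x_1)\ge 0$ back in terms of $\gamma$ yields exactly $\gamma\ge \frac{p_s^*(p_s^*-p)}{p\alpha}\bigl(\tfrac{p-\beta}{p-\alpha}\bigr)^{(p-\beta)/p}$, the first entry in \eqref{condition on gamma: 2}. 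Symmetrically, since $\alpha<p$, $g_2$ attains its unique maximum at $x_2=(p-\alpha)/(p-\beta)$ and $g_2(x_2)\le 0$ is equivalent to the second entry of \eqref{condition on gamma: 2}. Thus \eqref{condition on gamma: 2} forces $g_1\ge 0$ and $g_2\le 0$ on $(0,\infty)$, so that $f_1$ is strictly increasing and $f_2$ strictly decreasing (the critical points being isolated).

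Combined with the elementary limits $f_1(0^+)=0$, $\lim_{x\to\infty}f_1(x)=1$, $f_2(0^+)=1$, $\lim_{x\to\infty}f_2(x)=0$, strict monotonicity yields a unique intersection $x_0\in(0,\infty)$; since $f_1<1$ on all of $(0,\infty)$, we get $y_0^{(p_s^*-p)/p}=f_1(x_0)<1$, i.e. $k_0+\ell_0<1$. The uniqueness of the intersection also implies that the solution of \eqref{algebraic eq1} is unique, so $(k_0,\ell_0)=(k_1,\ell_1)$ in the notation of Lemma~\ref{Sol for F1 F2}. The assertion $F_2(k,\ell(k))<0$ on $(0,k_0)$ is then precisely the defining property of $k_0$, and $F_1(k(\ell),\ell)<0$ on $(0,\ell_0)=(0,\ell_1)$ is the matching statement in Lemma~\ref{Sol for F1 F2}. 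The hardest part, I expect, is the clean arithmetic identifying \eqref{condition on gamma: 2} with the two sign conditions $g_1(x_1)\ge 0$ and $g_2(x_2)\le 0$: the exponent $p/(\beta-p)$ is negative, so one must be careful with the direction of inequalities when exponentiating.
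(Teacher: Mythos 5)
Your argument is correct, but it is genuinely different from the one in the paper. The paper works directly with the implicit curves $\ell(k)$ and $k(\ell)$ from \eqref{algebraic eq3}: it computes $\ell'(k)$ and $\ell''(k)$, locates the inflection point $\tilde k$, and uses the first entry of \eqref{condition on gamma: 2} to show $\min_{k\in(0,1]}\ell'(k)=\ell'(\tilde k)\geq -1$ (the second entry playing the symmetric role for $k(\ell)$); it then invokes \cite[Lemma~3.3]{GuPeZo} with $\mu_1=\mu_2=1$, where the slope bound forces the curve $F_1=0$, which passes through $(1,0)$, to stay on or below the line $k+\ell=1$, yielding $k_0+\ell_0<1$ and the two sign conditions. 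You instead recycle the $(x,y)=(k/\ell,\,k+\ell)$ substitution of Proposition~\ref{algebraic propoistion for alpha beta bigger than p}, and your arithmetic checks out: for $\alpha,\beta<p$ the function $g_1$ is strictly convex with minimum value $(\beta-p)x_1+(p-\alpha)$, $g_2$ has a unique maximum at $x_2=(p-\alpha)/(p-\beta)$ with $g_2(x_2)=\frac{p_s^*(p_s^*-p)}{\beta\gamma}-p\bigl(\frac{p-\beta}{p-\alpha}\bigr)^{(p-\alpha)/p}$, and the two entries of \eqref{condition on gamma: 2} are exactly $g_1(x_1)\ge 0$ and $g_2(x_2)\le 0$ (the sign reversal from the negative exponent $p/(\beta-p)$ is handled correctly). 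The limits $f_1(0^+)=0$, $f_1(\infty)=1$, $f_2(0^+)=1$, $f_2(\infty)=0$ (the last using $p_s^*-p<\alpha$, i.e. $\beta<p$) then give a unique intersection with $y_0^{(p_s^*-p)/p}=f_1(x_0)<1$. What your route buys is a self-contained proof of $k_0+\ell_0<1$ together with uniqueness of the positive solution of \eqref{algebraic eq1} — a fact the paper only obtains later via \cite[Proposition~3.4]{GuPeZo} — and the identification $(k_1,\ell_1)=(k_0,\ell_0)$, from which the two strict inequalities follow directly from Lemma~\ref{Sol for F1 F2}; the paper's route avoids the change of variables but outsources the geometric step to an external lemma.
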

	
	\begin{proof}
		Using \eqref{algebraic eq3}, we obtain
		
		$$
		\ell^{\prime}(k)=\left(\frac{p^*_s}{\alpha\gamma}\right)^{\frac{p}{\beta}}k^{\frac{p-p^*_s}{\beta}}\left(1-k^{\frac{p^*_s-p}{p}}\right)^{\frac{p-\beta}{\beta}}\left(\frac{p-\alpha}{\beta}-k^{\frac{p^*_s-p}{p}}\right),
		$$
		
		and then we have
		
		\begin{multline*}
		\ell^{\prime\prime}(k)=\frac{(p-\beta)(p^*_s-p)}{p\beta} \left(\frac{p^*_s}{\alpha\gamma}\right)^{\frac{p}{\beta}}k^{\frac{p-2\beta-\alpha}{\beta}}\left(1-k^{\frac{p^*_s-p}{p}}\right)^{\frac{p-2\beta}{\beta}}\left[k^{\frac{p^*_s-p}{p}}\left(-\frac{p-\alpha}{\beta}+k^{\frac{p^*_s-p}{p}}\right)\right.\\
		\left.-\left(1-k^{\frac{p^*_s-p}{p}}\right)\left(\frac{p(p-\alpha)}{\beta(p-\beta)}-k^{\frac{p^*_s-p}{p}}\right)\right].
		\end{multline*}
		Note that $\ell^{\prime}(1)=0=\ell^{\prime}\left(\left(\frac{p-\alpha}{\beta}\right)^{\frac{ p}{p^*_s-p}}\right)$ and $\ell^{\prime}(k)>0$ for $0<k<\left(\frac{p-\alpha}{\beta}\right)^{\frac{ p}{p^*_s-p}}$, whereas $\ell^{\prime}(k)<0$ for $\left(\frac{p-\alpha}{\beta}\right)^{\frac{ p}{p^*_s-p}}<k<1$.
		From $\ell^{\prime\prime}(k)=0$, we obtain $\tilde{k}=\left(\frac{p(p-\alpha)}{\beta(2p-p^*_s)}\right)^{\frac{ p}{p^*_s-p}}$. Then by \eqref{condition on gamma: 2}, we obtain
		\begin{multline*}
		\min_{\substack{k\in(0,1]}}\ell^{\prime}(k)=\min_{\substack{k\in\left(\left(\frac{p-\alpha}{\beta}\right)^{\frac{ p}{p^*_s-p}},\;1\right]}}\ell^{\prime}(k)=\ell^{\prime}(\tilde{k})=-\left(\frac{p^*_s(p^*_s-p)}{\alpha\gamma p}\right)^{\frac{ p}{\beta}}\left(\frac{p-\beta}{p-\alpha}\right)^{\frac{p-\beta}{\beta}}\geq -1.
		\end{multline*}	
		The remaining proof follows from \cite[Lemma~3.3]{GuPeZo} by considering $\mu_1=1=\mu_2$ in their proof.
	\end{proof}
	
	\begin{lemma}\label{algebraic lemma for alpha beta less than p}
		Assume that $\frac{N}{N+2s}<p<\frac{N}{2s}$; $\alpha,\;\beta<p$ and \eqref{condition on gamma: 2} holds. Then
		\begin{equation*}
		\begin{cases}
		k+\ell\leq k_0+\ell_0,\\
		F_1(k,\ell)\geq0,\;\;F_2(k,\ell)\geq0,\\
		k,\;\ell\geq0\;\;(k,\ell)\neq(0,0),
		\end{cases}
		\end{equation*}
		has a unique solution $(k,\ell)=(k_0,\ell_0)$, where $F_1,\;F_2$ are given by \eqref{algebraic eq3}.
	\end{lemma}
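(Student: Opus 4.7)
The plan is to show that the set
\[\Omega := \{(k,\ell) \in [0,\infty)^2\setminus\{(0,0)\} : F_1(k,\ell)\geq 0,\; F_2(k,\ell)\geq 0,\; k+\ell \leq k_0+\ell_0\}\]
reduces to the single point $(k_0,\ell_0)$. Since $\Omega$ is closed and sits inside the bounded triangle $\{k+\ell\leq k_0+\ell_0\}$, the infimum of $k+\ell$ on $\Omega$ is attained at some $(k^*,\ell^*)\in\Omega$; I will show $(k^*,\ell^*)=(k_0,\ell_0)$.

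First, $(k^*,\ell^*)$ cannot lie in the open set $\{F_1>0\}\cap\{F_2>0\}$: by continuity a small perturbation in the direction $(-1,-1)$ preserves both strict inequalities (and remains in the positive orthant) while strictly decreasing $k+\ell$, contradicting minimality. Thus either $F_1(k^*,\ell^*)=0$ or $F_2(k^*,\ell^*)=0$. Assume first that $F_1(k^*,\ell^*)=0$. Since $k^*+\ell^* \leq k_0+\ell_0 <1$ by the preceding lemma, $k^*\in(0,1)$, and strict monotonicity of $F_1(k^*,\cdot)$ in $\ell$ gives $\ell^*=\ell(k^*)$ with $\ell$ as in \eqref{algebraic eq3}. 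Set $\phi(k):=k+\ell(k)$. The computation in the proof of the preceding lemma (using condition \eqref{condition on gamma: 2}) shows $\ell'(k)\geq -1$ on $(0,1]$, so $\phi$ is non-decreasing; moreover, since $\ell'(k)+1$ is a nontrivial real-analytic function of $k$ on $(0,1]$, its zeros are isolated and $\phi$ is strictly increasing. Finally, the preceding lemma also gives $F_2(k,\ell(k))<0$ for $k\in(0,k_0)$, so $F_2(k^*,\ell^*)\geq 0$ forces $k^*\geq k_0$. Combining, $k^*+\ell^* = \phi(k^*)\geq \phi(k_0)=k_0+\ell_0$; the reverse inequality from $(k^*,\ell^*)\in\Omega$ yields equality, and strict monotonicity of $\phi$ pins $k^*=k_0$ and hence $\ell^*=\ell_0$.

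The case $F_2(k^*,\ell^*)=0$ is handled symmetrically. The bound in \eqref{condition on gamma: 2} is invariant under $\alpha\leftrightarrow\beta$, so repeating the computation of the preceding lemma with $\alpha$ and $\beta$ interchanged gives $k'(\ell)\geq -1$ on $(0,1]$ and strict monotonicity of $\psi(\ell):=k(\ell)+\ell$, with $\psi(\ell_0)=k_0+\ell_0$. Combined with $F_1(k(\ell),\ell)<0$ for $\ell\in(0,\ell_0)$ from the preceding lemma, this forces $\ell^*\geq\ell_0$ and therefore $(k^*,\ell^*)=(k_0,\ell_0)$.

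The main technical step I expect to need care is the strict monotonicity of $\phi$ (and $\psi$): the bound of the preceding lemma is only $\ell'(k)\geq -1$, which may be attained as equality at borderline values of $\gamma$, so an analyticity (or algebraic) argument is required to exclude $\phi'=0$ on an interval and thereby upgrade the attained infimum to a unique minimizer.
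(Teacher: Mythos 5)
Your argument is correct, but note that the paper does not actually write out a proof of this lemma: it simply invokes \cite[Proposition~3.4]{GuPeZo} (the case $\mu_1=\mu_2=1$ of the local critical $p$-Laplacian system), where the argument is a direct case analysis on an arbitrary admissible pair $(k,\ell)$. What you do differently is to introduce the auxiliary extremal problem of minimizing $k+\ell$ over the admissible set $\Omega$. This is a genuinely useful reorganization: it forces the relevant point onto one of the curves $F_1=0$ or $F_2=0$, which is exactly where the sign information prepared in Lemma~\ref{Sol for F1 F2} and the preceding lemma ($F_2(k,\ell(k))<0$ for $k\in(0,k_0)$ and $F_1(k(\ell),\ell)<0$ for $\ell\in(0,\ell_0)$) is available. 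A naive direct argument is harder here because $F_1(\cdot,\ell)$ and $F_2(k,\cdot)$ are not monotone when $\alpha,\beta<p$ (the exponents $\alpha-p$, $\beta-p$ are negative), so one cannot pass from $\ell\ge\ell(k)$ to a sign for $F_2(k,\ell)$. Both routes ultimately rest on the same ingredients: $\phi(k)=k+\ell(k)$ and $\psi(\ell)=k(\ell)+\ell$ nondecreasing (from $\ell'\ge-1$ and, by the $\alpha\leftrightarrow\beta$ symmetry of \eqref{condition on gamma: 2}, $k'\ge-1$), together with the sign conditions below $k_0$ and $\ell_0$.

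Three small points to tighten. First, before perturbing in the direction $(-1,-1)$ you must check that $\Omega$ avoids the coordinate axes: if $k=0$ then $F_2(0,\ell)=\ell^{(p^*_s-p)/p}-1<0$ since $\ell\le k_0+\ell_0<1$, and symmetrically $F_1(k,0)=k^{(p^*_s-p)/p}-1<0$ if $\ell=0$; this is what guarantees $k^*,\ell^*>0$ and hence that the perturbed point stays admissible. Second, for the strict monotonicity of $\phi$ you do not need the analyticity detour: the computation in the lemma preceding this one shows that $\ell'$ attains its minimum (which is $\ge-1$) only at the single point $\tilde{k}$, so $\phi'=1+\ell'\ge0$ vanishes at most at one point, and a $C^1$ function with nonnegative derivative vanishing at only one point is strictly increasing; the same applies to $\psi$. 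Third, you should state the closing step explicitly: once every minimizer is shown to equal $(k_0,\ell_0)$, the minimum of $k+\ell$ over $\Omega$ equals $k_0+\ell_0$, which is also the imposed upper bound, so every point of $\Omega$ is a minimizer and therefore equals $(k_0,\ell_0)$.
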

	\begin{proof}
		The proof follows from \cite[Proposition~3.4]{GuPeZo}.
	\end{proof}
	
	\noindent{\bf Proof of Theorem~\ref{minimizer value}}: Using \eqref{algebraic eq1}, we have $\left(k_0^{1/p}U,\ell_0^{1/p}U\right)\in\mathcal{N}$ is a nontrivial solution of \eqref{MAT2} and
	\begin{equation}\label{upperbdd of A}
	A\leq\mathcal{J}\left(k_0^{1/p}U,\ell_0^{1/p}U\right)=\frac{s}{N}(k_0+\ell_0)\mathcal{S}^{\frac{N}{sp}}.
	\end{equation}
	Now, suppose $\{(u_n,v_n)\}\in\mathcal{N}$ be a minimizing sequence for $A$ such that $\mathcal{J}(u_n,v_n)\to A$ as $n\to\infty.$ Let $c_n=\|u_n\|_{L^{p^*_s}(\rn)}^p$ and $d_n=\|v_n\|_{L^{p^*_s}(\rn)}^p$. Then by H\"older inequality we have
	\begin{equation}\label{Sc_n upperbdd}
	\mathcal{S}c_n\leq\|u_n\|_{\dot{W}^{s,p}}^p=\int_{\rn}\left(|u_n|^{p^*_s}+\frac{ \alpha\gamma}{p^*_s}|u_n|^{\alpha}|v_n|^{\beta}\right)dx\leq c_n^{\frac{ p^*_s}{p}}+\frac{ \alpha\gamma}{p^*_s}c_n^{\frac{\alpha}{p}}d_n^{\frac{\beta}{p}}.
	\end{equation}
	This implies that
	$$
	\tilde{c_n}^{\frac{ p^*_s-p}{p}}+\frac{ \alpha\gamma}{p^*_s}\tilde{c_n}^{\frac{\alpha-p}{p}}\tilde{d_n}^{\frac{\beta}{p}}\geq1\quad \text{i.e., }\, F_1(\tilde{c_n},\tilde{d_n})\geq0,
	$$
	where $\tilde{c_n}=\frac{c_n}{\mathcal{S}^{\frac{p}{p^*_s-p}}}$, $\tilde{d_n}=\frac{d_n}{\mathcal{S}^{\frac{p}{p^*_s-p}}}.$
	Similarly, we get
	
	\begin{equation}\label{Sd_n upperbdd}
	\mathcal{S}d_n\leq\|v_n\|_{\dot{W}^{s,p}}^p=\int_{\rn}\left(|v_n|^{p^*_s}+\frac{ \beta\gamma}{p^*_s}|u_n|^{\alpha}|v_n|^{\beta}\right)dx\leq d_n^{\frac{ p^*_s}{p}}+\frac{ \beta\gamma}{p^*_s}c_n^{\frac{\alpha}{p}}d_n^{\frac{\beta}{p}},
	\end{equation}
	and thus $F_2(\tilde{c_n},\tilde{d_n})\geq0.$
	Then for $\alpha,\beta>p$, by Proposition~\ref{algebraic propoistion for alpha beta bigger than p} we have $\tilde{c_n}+\tilde{d_n}\geq k+\ell=k_0+\ell_0$, on the other hand for $\alpha,\beta<p$, by Lemma~\ref{algebraic lemma for alpha beta less than p} we have $\tilde{c_n}+\tilde{d_n}= k_0+\ell_0.$ Hence,
	
	\begin{equation}\label{lowerbdd of c_n+d_n}
	c_n+d_n\geq(k_0+\ell_0)\mathcal{S}^{\frac{N-sp}{sp}}.
	\end{equation}
	Since $\mathcal{J}(u_n,v_n)=\frac{s}{N}\left(\|u_n\|_{\dot{W}^{s,p}}^p+\|v\|_{\dot{W}^{s,p}}^p\right)$, using \eqref{upperbdd of A}-\eqref{Sd_n upperbdd} we have
	
	$$
	\mathcal{S}(c_n+d_n)\leq\frac{N}{s}\mathcal{J}(u_n,v_n)=\frac{N}{s}A+o(1)\leq(k_0+\ell_0)\mathcal{S}^{\frac{N}{sp}}+o(1)
	$$
	This implies that
	
	\begin{equation}\label{upperbdd of c_n+d_n}
	c_n+d_n\leq(k_0+\ell_0)\mathcal{S}^{\frac{N-sp}{sp}}+o(1).
	\end{equation}
	Combining \eqref{lowerbdd of c_n+d_n} and \eqref{upperbdd of c_n+d_n}, we obtain $c_n+d_n\to(k_0+\ell_0)\mathcal{S}^{\frac{N-sp}{sp}}$ as $n\to\infty.$ Therefore,
	
	$$
	A=\lim\limits_{n\to\infty}\mathcal{J}(u_n,v_n)\geq\frac{s}{N}\mathcal{S}\lim\limits_{n\to\infty}(c_n+d_n)=(k_0+\ell_0)\mathcal{S}^{\frac{N-sp}{p}}.
	$$
	Therefore,
	$$
	A=\frac{s}{N}(k_0+\ell_0)\mathcal{S}^{\frac{N}{sp}}=\mathcal{J}\left(k_0^{1/p}U,\ell_0^{1/p}U\right).
	$$
	This completes the proof of Theorem~\ref{minimizer value}.
	\hfill{$\square$}

	\medskip
	
	\medskip

	Next, we prove existence of solutions of \eqref{system_ball}, namely Theorem~\ref{in ball}. For this, 
	define $$X(B_R(0))=W^{s,p}_{0}(B_R(0))\times W^{s,p}_{0}(B_R(0)),$$ 
	where $W^{s,p}_{0}(B_R(0))=\{u\in W^{s,p}(\rn): u=0 \mbox{ in } \rn\setminus B_R(0)\}$ with the norm $||\cdot||_{\dot{W}^{s,p}}$,
	and
	\begin{multline*}
	\tilde{\mathcal N}(R)=\left\{(u,v)\in X(B_R(0))\setminus\{(0,0)\}:\, \|u\|_{\dot{W}^{s,p}}^p+\|v\|_{\dot{W}^{s,p}}^p\right.
	\\
	\left.=\int_{B_R(0)}\left(|u|^{p^*_s}+|v|^{p^*_s}+\gamma|u|^{\alpha}|v|^{\beta}\right)dx\right\},
	\end{multline*}
	and set $\tilde{A}(R):=\inf\limits_{(u,v)\in\tilde{\mathcal N}(R)}\mathcal{J}(u,v)$. We also define
	
	\begin{equation*}
	\tilde{\mathcal N}=\left\{(u,v)\in X\setminus\{0\}:\, \|u\|_{\dot{W}^{s,p}}^p+\|v\|_{\dot{W}^{s,p}}^p=\int_{\rn}\left(|u|^{p^*_s}+|v|^{p^*_s}+\gamma|u|^{\alpha}|v|^{\beta}\right)dx\right\}.
	\end{equation*}
	Set $\tilde{A}:=\inf\limits_{(u,v)\in\tilde{\mathcal N}}\mathcal{J}(u,v)$. Since $\mathcal{N}\subset\tilde{\mathcal N}$, it follows $\tilde{A}\leq A$ and by the fractional Sobolev embedding $\tilde{A}>0.$
	
	For $\epsilon\in (0, \min\{\alpha, \beta\}-1)$, consider

	\begin{equation}\label{epsilon system}
	\begin{cases}
	(-\Delta_p)^s u =|u|^{p^*_s-2-2\epsilon}u+ \frac{(\alpha-\epsilon)\gamma}{p_s^*-2\epsilon}|u|^{\alpha-2-\epsilon}u|v|^{\beta-\epsilon}\;\;\text{in}\;B_R(0),\vspace{.2cm}
	\\
	(-\Delta_p)^s v =|v|^{p^*_s-2-2\epsilon}v+ \frac{(\beta-\epsilon)\gamma}{p_s^*-2\epsilon}|v|^{\beta-2-\epsilon}v|u|^{\alpha-\epsilon}\;\;\text{in}\;B_R(0),
	\\
	u,\;v\in W^{s,p}_{0}(B_R(0)).
	\end{cases}
	\end{equation}
	The corresponding energy functional of the system \eqref{epsilon system} is given by
	
	\begin{multline*}
	\mathcal J_\epsilon(u,v):=\frac{1}{p}\left(\|u\|_{\dot{W}^{s,p}}^p+\|v\|_{\dot{W}^{s,p}}^p\right)
	\\
	-\frac{1}{p^*_s-2\epsilon}\int_{B_R(0)}\left(|u|^{p^*_s-2\epsilon}+|v|^{p^*_s-2\epsilon}+\gamma|u|^{\alpha-\epsilon}|v|^{\beta-\epsilon}\right)dx.
	\end{multline*}
	Define
	\begin{multline*}
	\tilde{\mathcal N}_\epsilon(R):=\left\{(u,v)\in X(B_1(0))\setminus\{(0,0)\}:G_\epsilon(u,v):=\|u\|_{\dot{W}^{s,p}}^p+\|v\|_{\dot{W}^{s,p}}^p\right.
	\\
	\left.-\int_{B_R(0)}\left(|u|^{p^*_s-2\epsilon}+|v|^{p^*_s-2\epsilon}+\gamma|u|^{\alpha-\epsilon}|v|^{\beta-\epsilon}\right)dx=0\right\},
	\end{multline*}
	and set $\tilde{A}_\epsilon(R):=\inf\limits_{(u,v)\in\tilde{\mathcal N}_\epsilon(R)}\mathcal{J}_\epsilon(u,v)$.
	
	\begin{lemma} \label{Lemma 3.5}
		For any $\epsilon_0 \in (0,\min\{\alpha - 1,\beta - 1,(p_s^* - p)/2\})$, there exists a constant $C_{\epsilon_0} > 0$ such that
		\[
		\tilde{A}_\epsilon(R) \ge C_{\epsilon_0} \quad \forall \epsilon \in (0,\epsilon_0].
		\]
	\end{lemma}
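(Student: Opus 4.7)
The plan is to rewrite the energy on the manifold in terms of the norm, and then obtain a uniform lower bound for the norm on $\tilde{\mathcal N}_\epsilon(R)$ by combining the Nehari identity with fractional Sobolev embeddings on the bounded domain $B_R(0)$. A uniform positive lower bound on the energy will then follow.

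First, for any $(u,v)\in\tilde{\mathcal N}_\epsilon(R)$, the Nehari identity $G_\epsilon(u,v)=0$ combined with the definition of $\mathcal J_\epsilon$ gives
$$\mathcal J_\epsilon(u,v)=\Bigl(\frac{1}{p}-\frac{1}{p^*_s-2\epsilon}\Bigr)\bigl(\|u\|_{\dot W^{s,p}}^p+\|v\|_{\dot W^{s,p}}^p\bigr).$$
Since $\epsilon\leq\epsilon_0<(p^*_s-p)/2$, the prefactor is bounded below uniformly in $\epsilon$ by $\frac{1}{p}-\frac{1}{p^*_s-2\epsilon_0}>0$. So it suffices to prove a uniform lower bound $\|u\|_{\dot W^{s,p}}^p+\|v\|_{\dot W^{s,p}}^p\geq c_0>0$ on $\tilde{\mathcal N}_\epsilon(R)$.

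Second, since $W^{s,p}_0(B_R(0))\hookrightarrow L^{p^*_s}(B_R(0))$ with constant $\mathcal S^{-1/p}$ and $L^{p^*_s}(B_R(0))\hookrightarrow L^{q}(B_R(0))$ for every $q\in[1,p^*_s]$ with constant $|B_R(0)|^{1/q-1/p^*_s}$, and because $p^*_s-2\epsilon\in[p^*_s-2\epsilon_0,p^*_s)$ is bounded, there exists $C=C(R,\epsilon_0)$ such that
$$\int_{B_R(0)}|u|^{p^*_s-2\epsilon}\,dx\leq C\,\|u\|_{\dot W^{s,p}}^{p^*_s-2\epsilon}$$
uniformly in $\epsilon\in(0,\epsilon_0]$. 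For the mixed term I would use Hölder with conjugate exponents $\tfrac{p^*_s-2\epsilon}{\alpha-\epsilon}$ and $\tfrac{p^*_s-2\epsilon}{\beta-\epsilon}$ (whose reciprocals sum to $1$) to get
$$\int_{B_R(0)}|u|^{\alpha-\epsilon}|v|^{\beta-\epsilon}\,dx\leq\Bigl(\int_{B_R(0)}|u|^{p^*_s-2\epsilon}\Bigr)^{\frac{\alpha-\epsilon}{p^*_s-2\epsilon}}\Bigl(\int_{B_R(0)}|v|^{p^*_s-2\epsilon}\Bigr)^{\frac{\beta-\epsilon}{p^*_s-2\epsilon}},$$
and then Young's inequality bounds this by $\int|u|^{p^*_s-2\epsilon}+\int|v|^{p^*_s-2\epsilon}$. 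Inserting into the Nehari identity and denoting $T:=\|u\|_{\dot W^{s,p}}^p+\|v\|_{\dot W^{s,p}}^p$, I obtain
$$T\leq C'\bigl(\|u\|_{\dot W^{s,p}}^{p^*_s-2\epsilon}+\|v\|_{\dot W^{s,p}}^{p^*_s-2\epsilon}\bigr)\leq 2C'\,T^{(p^*_s-2\epsilon)/p},$$
with $C'=C'(R,\epsilon_0,\gamma)$ independent of $\epsilon\leq\epsilon_0$.

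Third, since $(p^*_s-2\epsilon)/p-1\geq (p^*_s-2\epsilon_0-p)/p>0$, this inequality forces $T\geq c_0:=(2C')^{-p/(p^*_s-2\epsilon_0-p)}>0$ uniformly in $\epsilon\in(0,\epsilon_0]$, and then $\mathcal J_\epsilon(u,v)\geq C_{\epsilon_0}:=\bigl(\tfrac{1}{p}-\tfrac{1}{p^*_s-2\epsilon_0}\bigr)c_0>0$.

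The main obstacle is simply ensuring that every constant arising from the embedding and Hölder/Young steps can be chosen uniformly in the perturbation parameter; the hypothesis $\epsilon_0<(p^*_s-p)/2$ is precisely what keeps the exponent $p^*_s-2\epsilon$ strictly above $p$ and bounded between $p^*_s-2\epsilon_0$ and $p^*_s$, so both the Sobolev-embedding constant and the rearrangement exponent stay controlled. The condition $\epsilon_0<\min\{\alpha-1,\beta-1\}$ keeps $\alpha-\epsilon,\beta-\epsilon>1$ so that the Hölder pairing is legitimate. Aside from these uniformity checks, the argument is a direct adaptation of the standard Nehari lower bound in the subcritical case.
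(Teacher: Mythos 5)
Your proposal is correct and follows essentially the same route as the paper: rewrite $\mathcal J_\epsilon$ on the Nehari set as a multiple of $T=\|u\|_{\dot W^{s,p}}^p+\|v\|_{\dot W^{s,p}}^p$, bound the right-hand side of the Nehari identity via H\"older, Sobolev and Young to get $T\le C'T^{(p_s^*-2\epsilon)/p}$, and invert. The only cosmetic point is that your explicit formula $c_0=(2C')^{-p/(p_s^*-2\epsilon_0-p)}$ is the correct uniform minimum only when $2C'\ge 1$; in general one should (as the paper does) observe that $\epsilon\mapsto(2C')^{-p/(p_s^*-p-2\epsilon)}$ is continuous and positive on $[0,\epsilon_0]$ and take its minimum.
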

	
	\begin{proof}
		Let $(u,v)\in\tilde{\mathcal N}_\epsilon(R)$. Then
		\[
		\mathcal J_\epsilon(u,v) = \left(\frac{1}{p} - \frac{1}{p_s^* - 2 \epsilon}\right) \left(\|u\|_{\dot{W}^{s,p}}^p +\|v\|_{\dot{W}^{s,p}}^p\right),
		\]
		so it suffices to show that $\|u\|_{\dot{W}^{s,p}}^p + \|v\|_{\dot{W}^{s,p}}^p$ is bounded away from zero. We have
		\begin{multline} \label{3.13}
		\|u\|_{\dot{W}^{s,p}}^p +\|v\|_{\dot{W}^{s,p}}^p = \int_{B_R(0)} \left(|u|^{p_s^* - 2 \epsilon} + |v|^{p_s^* - 2 \epsilon} + \gamma |u|^{\alpha - \epsilon} |v|^{\beta - \epsilon}\right) dx\\
		\le |B_R(0)|^{2 \epsilon/p_s^*} \left[\left(\int_{B_R(0)} |u|^{p_s^*}\, dx\right)^{(p_s^* - 2 \epsilon)/p_s^*} + \left(\int_{B_R(0)} |v|^{p_s^*}\, dx\right)^{(p_s^* - 2 \epsilon)/p_s^*}\right.\\
		\left.+ \gamma \left(\int_{B_R(0)} |u|^{p_s^*}\, dx\right)^{(\alpha - \epsilon)/p_s^*} \left(\int_{B_R(0)} |u|^{p_s^*}\, dx\right)^{(\beta - \epsilon)/p_s^*}\right]\\
		\le |B_R(0)|^{2 \epsilon/p_s^*}\, \mathcal{S}^{-(p_s^* - 2 \epsilon)/p} \left(\|u\|_{\dot{W}^{s,p}}^{p_s^* - 2 \epsilon} + \|v\|_{\dot{W}^{s,p}}^{p_s^* - 2 \epsilon} + \gamma\, \|u\|_{\dot{W}^{s,p}}^{\alpha - \epsilon}\, \|v\|_{\dot{W}^{s,p}}^{\beta - \epsilon}\right)
		\end{multline}
		by the H\"{o}lder and Sobolev inequalities. By Young's inequality,
		\[
		\|u\|_{\dot{W}^{s,p}}^{\alpha - \epsilon}\,  \|v\|_{\dot{W}^{s,p}}^{\beta - \epsilon} \le \frac{\alpha - \epsilon}{p_s^* - 2 \epsilon}\, \|u\|_{\dot{W}^{s,p}}^{p_s^* - 2 \epsilon} + \frac{\beta - \epsilon}{p_s^* - 2 \epsilon}\,  \|v\|_{\dot{W}^{s,p}}^{p_s^* - 2 \epsilon} \le  \|u\|_{\dot{W}^{s,p}}^{p_s^* - 2 \epsilon} +  \|v\|_{\dot{W}^{s,p}}^{p_s^* - 2 \epsilon}.
		\]
		Therefore, \eqref{3.13} gives
		\begin{equation} \label{3.14}
		\|u\|_{\dot{W}^{s,p}}^p +\|v\|_{\dot{W}^{s,p}}^p \le (1 + \gamma)\, |B_R(0)|^{2 \epsilon/p_s^*}\, \mathcal{S}^{-(p_s^* - 2 \epsilon)/p} \left( \|u\|_{\dot{W}^{s,p}}^{p_s^* - 2 \epsilon} + \|v\|_{\dot{W}^{s,p}}^{p_s^* - 2 \epsilon}\right).
		\end{equation}
		Since $(p_s^* - 2 \epsilon)/p > 1$,
		\[
		\|u\|_{\dot{W}^{s,p}}^{p_s^* - 2 \epsilon} + \|v\|_{\dot{W}^{s,p}}^{p_s^* - 2 \epsilon} \le \left(\|u\|_{\dot{W}^{s,p}}^p +\|v\|_{\dot{W}^{s,p}}^p\right)^{(p_s^* - 2 \epsilon)/p},
		\]
		thus \eqref{3.14} gives
		\[
		\|u\|_{\dot{W}^{s,p}}^p + \|v\|_{\dot{W}^{s,p}}^p \ge \left(\frac{\mathcal{S}^{(p_s^* - 2 \epsilon)/p}}{(1 + \gamma)\, |B_R(0)|^{2 \epsilon/p_s^*}}\right)^{p/(p_s^* - p - 2 \epsilon)}.
		\]
		The desired conclusion follows from this since $p_s^* - p - 2 \epsilon \ge p_s^* - p - 2 \epsilon_0 > 0$ and the function $h(t)=\left(\frac{\mathcal{S}^{(p_s^* - 2t)/p}}{(1 + \gamma)\, |B_R(0)|^{2t/p_s^*}}\right)^{p/(p_s^* - p - 2t)}$ is contiuous and positive in $[0,\epsilon_0]$.
	\end{proof}
	
	\begin{lemma}\label{upper bdd of tilde A epsilon}
		Assume that $\frac{2N}{N+2s}<p<\frac{N}{2s}$ and $\alpha,\beta<p.$ For $\epsilon\in (0, \min\{\alpha, \beta\}-1)$, it holds
		$$
		\tilde{A}_\epsilon(R)<\min\left\{\inf\limits_{(u,0)\in\tilde{\mathcal N}_\epsilon(R)}\mathcal{J}_\epsilon(u,0),\;\inf\limits_{(0,v)\in\tilde{\mathcal N}_\epsilon(R)}\mathcal{J}_\epsilon(0,v)\right\}.
		$$
	\end{lemma}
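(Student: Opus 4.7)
The plan is to construct an explicit pair in $\tilde{\mathcal N}_\epsilon(R)$ with energy strictly below the one-component infimum $A_\ast := \inf_{(u,0) \in \tilde{\mathcal N}_\epsilon(R)} \mathcal J_\epsilon(u,0)$; the other inequality follows by swapping the roles of $u$ and $v$. First, note that under the hypotheses the exponents satisfy $p < p^*_s - 2\epsilon < p^*_s$, so the scalar constraint $(u,0) \in \tilde{\mathcal N}_\epsilon(R)$, namely $\|u\|_{\dot W^{s,p}}^p = \int_{B_R(0)} |u|^{p^*_s-2\epsilon}\,dx$, is a subcritical Nehari problem on a bounded domain. Standard compactness of the embedding $W^{s,p}_0(B_R(0)) \hookrightarrow L^{p^*_s-2\epsilon}(B_R(0))$ yields a positive minimizer $u_0$, and the identity
\[
\mathcal J_\epsilon(u,v) = \Bigl(\tfrac{1}{p} - \tfrac{1}{p^*_s - 2\epsilon}\Bigr)\bigl(\|u\|_{\dot W^{s,p}}^p + \|v\|_{\dot W^{s,p}}^p\bigr) \quad \text{on } \tilde{\mathcal N}_\epsilon(R)
\]
gives $A_\ast = \bigl(\tfrac{1}{p} - \tfrac{1}{p^*_s-2\epsilon}\bigr)\|u_0\|_{\dot W^{s,p}}^p$.

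Fix any positive $\phi \in W^{s,p}_0(B_R(0))$. For small $t > 0$, project the pair $(u_0, t\phi)$ radially onto the Nehari set by choosing the unique $\sigma(t) > 0$ for which $(\sigma(t)\,u_0,\, \sigma(t)\,t\phi) \in \tilde{\mathcal N}_\epsilon(R)$; this amounts to
\[
\sigma(t)^{p^*_s - p - 2\epsilon} = \frac{\|u_0\|_{\dot W^{s,p}}^p + t^p \|\phi\|_{\dot W^{s,p}}^p}{\displaystyle\int_{B_R(0)}\bigl(|u_0|^{p^*_s - 2\epsilon} + t^{p^*_s-2\epsilon}|\phi|^{p^*_s-2\epsilon} + \gamma t^{\beta-\epsilon}\, |u_0|^{\alpha-\epsilon}|\phi|^{\beta-\epsilon}\bigr)\,dx},
\]
which is well-defined and satisfies $\sigma(0) = 1$ thanks to the scalar Nehari identity for $u_0$. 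Set $I := \int_{B_R(0)} |u_0|^{\alpha-\epsilon}|\phi|^{\beta-\epsilon}\,dx > 0$.

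The key calculation is the asymptotic behaviour as $t \to 0^+$. Because $\beta < p$ and $\epsilon \in (0, \beta-1)$, we have $1 < \beta - \epsilon < p < p^*_s - 2\epsilon$, so the $t^{\beta-\epsilon}$ coupling term in the denominator is the dominant perturbation. A direct Taylor expansion of $\sigma(t)^{p^*_s-p-2\epsilon}$, followed by raising to the power $p/(p^*_s-p-2\epsilon)$ and multiplying by $\|u_0\|_{\dot W^{s,p}}^p + t^p\|\phi\|_{\dot W^{s,p}}^p$, yields
\[
\sigma(t)^p\bigl(\|u_0\|_{\dot W^{s,p}}^p + t^p \|\phi\|_{\dot W^{s,p}}^p\bigr) = \|u_0\|_{\dot W^{s,p}}^p - \frac{p \gamma\, I}{p^*_s - p - 2\epsilon}\, t^{\beta-\epsilon} + o(t^{\beta-\epsilon}).
\]
Using the Nehari identity for $\mathcal J_\epsilon$ one gets $\mathcal J_\epsilon(\sigma(t) u_0,\, \sigma(t)\, t\phi) < A_\ast$ for all sufficiently small $t>0$, whence $\tilde A_\epsilon(R) < A_\ast$. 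The inequality $\tilde A_\epsilon(R) < \inf_{(0,v) \in \tilde{\mathcal N}_\epsilon(R)} \mathcal J_\epsilon(0,v)$ is obtained by the symmetric construction, now exploiting $\alpha - \epsilon < p$ so that $t^{\alpha-\epsilon}$ is the dominant term.

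The main technical point is verifying that the coupling exponent $\beta - \epsilon$ (respectively $\alpha - \epsilon$) is strictly smaller than both $p$ and $p^*_s - 2\epsilon$, which is exactly what the hypothesis $\alpha, \beta < p$ supplies. Given this ordering, the rest is a routine expansion; the strict subcriticality of the coupling term is the single mechanism producing a negative leading correction and thus the strict gap between $\tilde A_\epsilon(R)$ and each one-component infimum.
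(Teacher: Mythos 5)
Your proposal is correct and follows essentially the same route as the paper's proof: attain the one-component infimum by a least energy solution of the scalar subcritical problem, perturb by a small second component, project onto $\tilde{\mathcal N}_\epsilon(R)$, and observe that since $\beta-\epsilon<p<p^*_s-2\epsilon$ the coupling term $t^{\beta-\epsilon}$ dominates the expansion and produces a strictly negative correction (the paper takes the second component to be $\sigma u_1$ rather than a general $t\phi$, an immaterial difference). The computation of the leading term matches the paper's, so the argument is sound.
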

	\begin{proof}
		Clearly, $2<p^*_s-2\epsilon<p^*_s$ from $\min\{\alpha, \beta\}\leq\frac{p^*_s}{2}.$ Then we may asuume that $u_1$ is a least energy solution of
		\begin{equation*}
		\begin{cases}
		(-\Delta_p)^su=|u|^{p^*_s-2-2\epsilon}u\,\text{ in  }B_R(0),\\
		u\in W^{s,p}_0(B_R(0)).
		\end{cases}
		\end{equation*}
		Set
		$$
		\mathcal{J}_{\epsilon}(u_1,0)=a_{10}:=\inf\limits_{(u,0)\in\tilde{\mathcal N}_\epsilon(R)}\mathcal{J}_\epsilon(u,0),\,\,\,\mathcal{J}_\epsilon(0,u_1)=a_{01}:=\inf\limits_{(0,v)\in\tilde{\mathcal N}_\epsilon(R)}\mathcal{J}_\epsilon(0,v).
		$$
		We claim that for any $\sigma\in\re$, there exists a unique $t(\sigma)>0$ such that $\left(t(\sigma)^{1/p}u_1, t(\sigma)^{1/p}\sigma u_1\right)\in\tilde{\mathcal N}_\epsilon(R)$.
		\begin{align*}
		t(\sigma)^{\frac{ p^*_s-p-2\epsilon}{p}}
		&=\frac{\|u_1\|_{\dot{W}^{s,p}}^p+|\sigma|^p\|u_1\|_{\dot{W}^{s,p}}^p}{\displaystyle\int_{B_R(0)}\left(|u_1|^{p^*_s-2\epsilon}+|\sigma u_1|^{p^*_s-2\epsilon}+\gamma|u_1|^{\alpha-\epsilon}|\sigma  u_1|^{\beta-\epsilon}\right)dx}
		\\
		&=\frac{qa_{10}+qa_{01}|\sigma|^p}{qa_{10}+qa_{01}|\sigma|^{p^*_s-2\epsilon}+|\sigma|^{\beta-\epsilon}\gamma\displaystyle\int_{B_R(0)}|u_1|^{p^*_s-\epsilon}}dx,
		\end{align*}
		where $q:=\frac{ p(p^*_s-2\epsilon)}{p^*_s-p-2\epsilon}$, i.e., $\frac{1}{q}=\frac{1}{p}-\frac{1}{p^*_s-2\epsilon} $. Note that $t(0)=1$, we have
		$$
		\lim\limits_{\sigma\to0}\frac{ t^{\prime}(\sigma)}{|\sigma|^{\beta-2-\epsilon}\sigma}
		=-\frac{ (\beta-\epsilon)\gamma\int_{B_R(0)}|u_1|^{p^*_s-\epsilon}dx}{a_{10}(p^*_s-2\epsilon)}.
		$$
		This implies that as $\sigma\to0$
		$$
		t^{\prime}(\sigma)
		=-\frac{ (\beta-\epsilon)\gamma\displaystyle\int_{B_R(0)}|u_1|^{p^*_s-\epsilon}dx}{a_{10}(p^*_s-2\epsilon)}|\sigma|^{\beta-2-\epsilon}\sigma(1+o(1)).
		$$
		Then
		$$
		t(\sigma)
		=1-\frac{ \gamma\displaystyle\int_{B_R(0)}|u_1|^{p^*_s-\epsilon}dx}{a_{10}(p^*_s-2\epsilon)}|\sigma|^{\beta-\epsilon}(1+o(1))\,\text{ as }\sigma\to0,
		$$
		and therefore
		$$
		t(\sigma)^{\frac{ p^*_s-2\epsilon}{p}}
		=1-\frac{ \gamma\displaystyle\int_{B_R(0)}|u_1|^{p^*_s-\epsilon}dx}{pa_{10}}|\sigma|^{\beta-\epsilon}(1+o(1))\,\text{ as }\sigma\to0.
		$$
		We get for $|\sigma|$ small enough
		\begin{multline*}
		\tilde{A}_\epsilon(R)
		\leq\mathcal{J}_\epsilon\left(t(\sigma)^{1/p}u_1, t(\sigma)^{1/p}\sigma u_1\right)\\
		=\frac{1}{q}\left(qa_{10}+qa_{01}|\sigma|^{p^*_s-2\epsilon}+|\sigma|^{\beta-\epsilon}\gamma\int_{B_R(0)}|u_1|^{p^*_s-\epsilon}dx\right)t(\sigma)^{\frac{ p^*_s-2\epsilon}{p}}\\
		=a_{10}-\frac{1}{p^*_s-2\epsilon}|\sigma|^{\beta-\epsilon}\gamma\int_{B_R(0)}|u_1|^{p^*_s-\epsilon}dx+o(|\sigma|^{\beta-\epsilon})
		<a_{10}.
		\end{multline*}
		Simililarly, we see that $\tilde{A}_\epsilon(R)<a_{01}$. This completes the proof.
	\end{proof}
	
	Note that, similarly to Lemma \ref{upper bdd of tilde A epsilon}, we obtain
	\begin{equation}\label{tl-A-U}
	\tilde A<\min\{\inf_{(u,0)\in\tilde{\mathcal{ N}}} \mathcal{J}(u,0),\, \inf_{(0,v)\in\mathcal{\tilde N}} \mathcal{J}(0,v)  \}=\min\{ \mathcal{J}(U,0),\, \mathcal{J}(0,U)  \}=\frac{s}{N}\,\mathcal{S}^\frac{N}{sp}.
	\end{equation}
	
	\begin{proposition}\label{radially symmetric solution}
		For $0<\epsilon<\min\big\{\min\{\alpha,\beta\}-1, \frac{ p^*_s-p}{2}\big\}$, the system \eqref{epsilon system} has a positive least energy solution $(u_\epsilon,v_\epsilon)$ with $u_\epsilon,\;v_\epsilon$ are both radially symmetric nonincreasing functions.
	\end{proposition}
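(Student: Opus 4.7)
The strategy is the standard Nehari minimization on a bounded domain, exploiting that the perturbed exponents are strictly subcritical. Note that on $\tilde{\mathcal N}_\epsilon(R)$ the functional reduces to
$$\mathcal J_\epsilon(u,v) = \left(\frac{1}{p} - \frac{1}{p^*_s - 2\epsilon}\right)\left(\|u\|_{\dot W^{s,p}}^p + \|v\|_{\dot W^{s,p}}^p\right),$$
so a minimizing sequence $\{(u_n,v_n)\}\subset\tilde{\mathcal N}_\epsilon(R)$ is bounded in $X(B_R(0))$. The first step is to replace each $(u_n,v_n)$ by the pair of Schwarz symmetric decreasing rearrangements $(u_n^\star,v_n^\star)$. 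By the P\'olya--Szeg\H o inequality for the fractional Gagliardo seminorm the seminorms do not increase, by the equimeasurability of rearrangements the $L^q$ norms are preserved, and by the Hardy--Littlewood rearrangement inequality $\int |u_n^\star|^{\alpha-\epsilon}|v_n^\star|^{\beta-\epsilon}\,dx \ge \int |u_n|^{\alpha-\epsilon}|v_n|^{\beta-\epsilon}\,dx$. Using the elementary bound $||a|-|b||^p \le |a-b|^p$, we may further assume $u_n, v_n \ge 0$.

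These operations yield $G_\epsilon(u_n^\star, v_n^\star) \le 0$, and since $p^*_s - 2\epsilon > p$ the unique scalar $t_n\in(0,1]$ defined by $t_n^{p^*_s - p - 2\epsilon} = (\|u_n^\star\|_{\dot W^{s,p}}^p + \|v_n^\star\|_{\dot W^{s,p}}^p)/\int_{B_R(0)}(|u_n^\star|^{p^*_s - 2\epsilon} + |v_n^\star|^{p^*_s - 2\epsilon} + \gamma|u_n^\star|^{\alpha-\epsilon}|v_n^\star|^{\beta-\epsilon})\,dx$ projects $(u_n^\star, v_n^\star)$ back onto $\tilde{\mathcal N}_\epsilon(R)$, and the projected pair has energy no larger than $\mathcal J_\epsilon(u_n,v_n)$. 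Thus without loss of generality the minimizing sequence is nonnegative and radially nonincreasing.

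Next I pass to a weak limit $(u_\epsilon, v_\epsilon)$ in $X(B_R(0))$. Because $p^*_s - 2\epsilon < p^*_s$, the embedding $W^{s,p}_0(B_R(0))\hookrightarrow L^{p^*_s - 2\epsilon}(B_R(0))$ is compact, so $u_n\to u_\epsilon$ and $v_n\to v_\epsilon$ strongly in $L^{p^*_s - 2\epsilon}(B_R(0))$; H\"older then gives convergence of the coupling term. If $(u_\epsilon,v_\epsilon)=(0,0)$, the constraint forces $\|u_n\|_{\dot W^{s,p}}^p + \|v_n\|_{\dot W^{s,p}}^p \to 0$, contradicting Lemma~\ref{Lemma 3.5}; hence the limit is nontrivial. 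Set $A = \|u_\epsilon\|_{\dot W^{s,p}}^p + \|v_\epsilon\|_{\dot W^{s,p}}^p$ and $B = \int_{B_R(0)}(|u_\epsilon|^{p^*_s - 2\epsilon} + |v_\epsilon|^{p^*_s - 2\epsilon} + \gamma|u_\epsilon|^{\alpha-\epsilon}|v_\epsilon|^{\beta-\epsilon})\,dx$. Weak lower semicontinuity and the strong convergence of the right-hand side give $A \le B$; if $A<B$, the Nehari projection $t^{p^*_s - p - 2\epsilon} = A/B < 1$ produces $(t u_\epsilon, t v_\epsilon)\in\tilde{\mathcal N}_\epsilon(R)$ with $\mathcal J_\epsilon(tu_\epsilon,tv_\epsilon) < \tilde A_\epsilon(R)$, contradicting the definition of the infimum. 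Therefore $A = B$, so $(u_\epsilon,v_\epsilon) \in \tilde{\mathcal N}_\epsilon(R)$ attains $\tilde A_\epsilon(R)$. The limit inherits radial symmetry and monotonicity from the sequence.

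It remains to show each component is strictly positive. If, say, $v_\epsilon\equiv 0$, then $(u_\epsilon, 0)\in\tilde{\mathcal N}_\epsilon(R)$ and $\mathcal J_\epsilon(u_\epsilon, 0) = \tilde A_\epsilon(R)$, which contradicts Lemma~\ref{upper bdd of tilde A epsilon}; hence both $u_\epsilon$ and $v_\epsilon$ are nontrivial nonnegative solutions of \eqref{epsilon system}, and the strong maximum principle for the fractional $p$-Laplacian (e.g. via the De~Giorgi/logarithmic estimates available in $B_R(0)$) upgrades them to strict positivity. The main obstacle in this plan is coupling the two projections---the Schwarz rearrangement and the Nehari rescaling---so that one obtains a radially nonincreasing, nonnegative minimizing sequence whose weak limit still lies in the constraint set; the delicate point here is precisely the strict inequality of Lemma~\ref{upper bdd of tilde A epsilon}, which is what prevents one component from being absorbed into the scalar problem.
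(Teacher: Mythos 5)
Your proposal follows essentially the same route as the paper: symmetrize the minimizing sequence (P\'olya--Szeg\H o plus Hardy--Littlewood), project back onto $\tilde{\mathcal N}_\epsilon(R)$ with a factor $t\in(0,1]$, use the compact embedding $W^{s,p}_0(B_R(0))\hookrightarrow L^{p^*_s-2\epsilon}(B_R(0))$ and Lemma~\ref{Lemma 3.5} to rule out a trivial limit, show the limit lies on the constraint set by the ``$t=1$'' argument, and use Lemma~\ref{upper bdd of tilde A epsilon} plus the strong maximum principle for positivity of both components.

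The one step you skip is the passage from ``constrained minimizer on $\tilde{\mathcal N}_\epsilon(R)$'' to ``weak solution of \eqref{epsilon system}''. Since $\tilde{\mathcal N}_\epsilon(R)$ is a constraint set and not a priori a natural invariant set for $\mathcal J_\epsilon$, you need the Lagrange multiplier argument: writing $\mathcal J_\epsilon'(u_\epsilon,v_\epsilon)+\lambda G_\epsilon'(u_\epsilon,v_\epsilon)=0$, testing with $(u_\epsilon,v_\epsilon)$, and using that
$G_\epsilon'(u_\epsilon,v_\epsilon)(u_\epsilon,v_\epsilon)=-(p^*_s-2\epsilon-p)\int_{B_R(0)}\bigl(|u_\epsilon|^{p^*_s-2\epsilon}+|v_\epsilon|^{p^*_s-2\epsilon}+\gamma|u_\epsilon|^{\alpha-\epsilon}|v_\epsilon|^{\beta-\epsilon}\bigr)dx<0$
together with $\mathcal J_\epsilon'(u_\epsilon,v_\epsilon)(u_\epsilon,v_\epsilon)=G_\epsilon(u_\epsilon,v_\epsilon)=0$ to conclude $\lambda=0$. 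This is a one-line computation but it is part of the claim (the proposition asserts a \emph{solution}, and your positivity argument also presupposes that $(u_\epsilon,v_\epsilon)$ solves the system so that the maximum principle applies). With that step added, your proof matches the paper's.
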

	\begin{proof}
		By Lemma \ref{Lemma 3.5}, $\tilde{A}_\epsilon(R)>0$. Let $(u,v)\in\tilde{\mathcal N}_\epsilon(R)$ with $u,\, v\geq0.$ We denote $u^*,\,v^*$ be Schwartz symmetrization of $u,\,v$ respectively. Then by nonlocal P\'{o}lya-Szeg\"{o} inequality (see, \cite{AlLi}) and properties of the Schwartz symmetrization, we obtain
		\begin{equation*}
		\|u^*\|_{\dot{W}^{s,p}}^p+\|v^*\|_{\dot{W}^{s,p}}^p\leq\int_{B_R(0)}\left(|u^*|^{p^*_s-2\epsilon}+|v^*|^{p^*_s-2\epsilon}+\gamma|u^*|^{\alpha-\epsilon}|v^*|^{\beta-\epsilon}\right)dx.
		\end{equation*}
		Also, note that $\mathcal{J}_\epsilon\left(t_*^{1/p}u^*,t_*^{1/p}v^*\right)\leq\mathcal{J}_\epsilon(u,v)$ for some $t_*\in(0,1]$ such that $\left(t_*^{1/p}u^*,t_*^{1/p}v^*\right)\in\tilde{\mathcal N}_\epsilon(R)$. Hence we choose a minimizing sequence $\{(u_n,v_n)\}\subset\tilde{\mathcal N}_\epsilon(R)$ of $\tilde{A}_\epsilon$ such that $(u_n,v_n)=(u_n^*,v_n^*)$ for any $n$ and $\mathcal{J}_\epsilon(u_n,v_n)\to\tilde{A}_\epsilon$ as $n\to\infty.$ Thus, we get both the sequence $\{u_n\}$, $\{v_n\}$ are bounded in $W^{s,p}_{0}(B_R(0))$. Since, $W^{s,p}_{0}(B_R(0))$ is a reflexive Banach space, upto a subsequence, $u_n\to u_\epsilon$, $v_n\to v_\epsilon$ weakly in $W^{s,p}_{0}(B_R(0)).$ Moreover, as $W^{s,p}_{0}(B_R(0))\hookrightarrow L^{p^*_s-2\epsilon}(B_R(0))$ is a compact embedding, it follows $u_n\to u_\epsilon$, $v_n\to v_\epsilon$ strongly in $L^{p^*_s-2\epsilon}(B_R(0))$. Thereore,
		
		\begin{multline*}
		\int_{B_R(0)}\left(|u_\epsilon|^{p^*_s-2\epsilon}+|v_\epsilon|^{p^*_s-2\epsilon}+\gamma|u_\epsilon|^{\alpha-\epsilon}|v_\epsilon|^{\beta-\epsilon}\right)dx
		\\
		=\lim\limits_{n\to\infty}\int_{B_R(0)}\left(|u_n|^{p^*_s-2\epsilon}+|v_n|^{p^*_s-2\epsilon}+\gamma|u_n|^{\alpha-\epsilon}|v_n|^{\beta-\epsilon}\right)dx
		\\
		=\frac{ p(p^*_s-2\epsilon)}{p^*_s-2\epsilon-p}\lim\limits_{n\to\infty}\mathcal{J}_\epsilon(u_n,v_n)
		=\frac{ p(p^*_s-2\epsilon)}{p^*_s-2\epsilon-p}\tilde{A}_\epsilon(R)>0,
		\end{multline*}
		this yields that $(u_\epsilon,v_\epsilon)\neq(0,0)$ and also $u_\epsilon,\,v_\epsilon$ are nonnegative radially symmetric decreasing. Using the weak lower semicontinuity property of the norm, we also have
		\begin{equation*}
		\|u_\epsilon\|_{\dot{W}^{s,p}}^p+\|v_\epsilon\|_{\dot{W}^{s,p}}^p
		\leq\lim\limits_{n\to\infty}\left(\|u_n\|_{\dot{W}^{s,p}}^p+\|v_n\|_{\dot{W}^{s,p}}^p\right),
		\end{equation*}
		and therefore
		\begin{equation*}
		\|u_\epsilon\|_{\dot{W}^{s,p}}^p+\|v_\epsilon\|_{\dot{W}^{s,p}}^p\leq\int_{B_R(0)}\left(|u_\epsilon|^{p^*_s-2\epsilon}+|v_\epsilon|^{p^*_s-2\epsilon}+\gamma|u_\epsilon|^{\alpha-\epsilon}|v_\epsilon|^{\beta-\epsilon}\right)dx.
		\end{equation*}
		Therefore there exists $t_\epsilon\in(0,1]$ such that $\left(t_\epsilon^{1/p}u_\epsilon,t_\epsilon^{1/p}v_\epsilon\right)\in\tilde{\mathcal N}_\epsilon$  and hence
		\begin{multline*}
		\tilde{A}_\epsilon(R)
		\leq\mathcal{J}_\epsilon\left(t_\epsilon^{1/p}u_\epsilon,t_\epsilon^{1/p}v_\epsilon\right)
		=\frac{ t_\epsilon(p^*_s-2\epsilon-p)}{p(p^*_s-2\epsilon)}\left(\|u_\epsilon\|_{\dot{W}^{s,p}}^p+\|v_\epsilon\|_{\dot{W}^{s,p}}^p\right)
		\\
		\leq\frac{p^*_s-2\epsilon-p}{p(p^*_s-2\epsilon)}\lim\limits_{n\to\infty}\left(\|u_n\|_{\dot{W}^{s,p}}^p+\|v_n\|_{\dot{W}^{s,p}}^p\right)
		=\lim\limits_{n\to\infty}\mathcal{J}_\epsilon(u_n,v_n)=\tilde{A}_\epsilon(R),
		\end{multline*}
		which yields that $t_\epsilon=1$, $\left(u_\epsilon,v_\epsilon\right)\in\tilde{\mathcal N}_\epsilon(R)$, $\tilde{A}_\epsilon(R)=\mathcal{J}_\epsilon(u_\epsilon,v_\epsilon)$ and
		\begin{equation*}
		\|u_\epsilon\|_{\dot{W}^{s,p}}^p+\|v_\epsilon\|_{\dot{W}^{s,p}}^p
		=\lim\limits_{n\to\infty}\left(\|u_n\|_{\dot{W}^{s,p}}^p+\|v_n\|_{\dot{W}^{s,p}}^p\right).
		\end{equation*}
		This proved that $u_n\to u_\epsilon$, $v_n\to v_\epsilon$ strongly in $W^{s,p}_{0}(B_R(0))$. Now by Lagrange multiplier theorem, there exists $\lambda\in\re$ such that
		$$
		\mathcal{J}_\epsilon^{\prime}(u_\epsilon,v_\epsilon)+\lambda G_\epsilon^{\prime}(u_\epsilon,v_\epsilon)=0.
		$$
		Again since $\mathcal{J}_\epsilon^{\prime}(u_\epsilon,v_\epsilon)=G_\epsilon(u_\epsilon,v_\epsilon)=0$ and
		$$
		G_\epsilon^{\prime}(u_\epsilon,v_\epsilon)(u_\epsilon,v_\epsilon)=-(p^*_s-2\epsilon-p)\int_{B_R(0)}\left(|u_\epsilon|^{p^*_s-2\epsilon}+|v_\epsilon|^{p^*_s-2\epsilon}+\gamma|u_\epsilon|^{\alpha-\epsilon}|v_\epsilon|^{\beta-\epsilon}\right)dx<0,
		$$
		we get $\lambda=0$ and hence $\mathcal{J}_\epsilon^{\prime}(u_\epsilon,v_\epsilon)=0$. Since $\tilde{A}_\epsilon(R)=\mathcal{J}_\epsilon(u_\epsilon,v_\epsilon)$ and by Lemma \ref{upper bdd of tilde A epsilon}, we have $u_\epsilon,v_\epsilon\not\equiv0.$ By maximum principle (see, \cite[Lemma~3.3]{DeQu}) we conclude the desired result.
	\end{proof}
	
	\begin{lemma} \label{Lemma 3.8}
		For any $(u,v) \in \tilde{\mathcal N}$, there is a sequence $(u_n,v_n) \in \tilde{\mathcal N} \cap (C^\infty_0(\rn) \times C^\infty_0(\rn))$ such that $(u_n,v_n) \to (u,v)$ in $X$ as $n\to\infty$.
	\end{lemma}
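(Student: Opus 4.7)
The plan is to use the density of $C^\infty_0(\rn)$ in $\dot{W}^{s,p}(\rn)$ together with a scaling argument that projects approximating pairs back onto the constraint surface $\tilde{\mathcal N}$. More precisely, given $(u,v)\in\tilde{\mathcal N}$, I first pick $(\tilde u_n,\tilde v_n)\in C^\infty_0(\rn)\times C^\infty_0(\rn)$ with $\tilde u_n\to u$ and $\tilde v_n\to v$ in $\dot{W}^{s,p}(\rn)$, which is possible since $C^\infty_0(\rn)$ is dense in $\dot{W}^{s,p}(\rn)$. These need not lie in $\tilde{\mathcal N}$, so the remaining step is to rescale by a single scalar $t_n>0$ and show that $t_n\to 1$.

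The scaling factor is determined by homogeneity: the map $(u,v)\mapsto(tu,tv)$ sends $\|u\|_{\dot{W}^{s,p}}^p+\|v\|_{\dot{W}^{s,p}}^p$ to $t^p$ times itself, while it sends $\int_{\rn}\bigl(|u|^{p^*_s}+|v|^{p^*_s}+\gamma|u|^{\alpha}|v|^{\beta}\bigr)dx$ to $t^{p^*_s}$ times itself (using $\alpha+\beta=p^*_s$). Thus $(t\tilde u_n,t\tilde v_n)\in\tilde{\mathcal N}$ exactly when
\[
t_n^{p^*_s-p}=\frac{\|\tilde u_n\|_{\dot{W}^{s,p}}^p+\|\tilde v_n\|_{\dot{W}^{s,p}}^p}{\displaystyle\int_{\rn}\bigl(|\tilde u_n|^{p^*_s}+|\tilde v_n|^{p^*_s}+\gamma|\tilde u_n|^{\alpha}|\tilde v_n|^{\beta}\bigr)dx}.
\]
I would then set $(u_n,v_n):=(t_n\tilde u_n,t_n\tilde v_n)$, which automatically lies in $\tilde{\mathcal N}\cap(C^\infty_0(\rn)\times C^\infty_0(\rn))$.

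To finish I must verify that $t_n$ is well defined for large $n$ and that $t_n\to 1$. The continuous embedding $\dot{W}^{s,p}(\rn)\hookrightarrow L^{p^*_s}(\rn)$ (with sharp constant $\mathcal{S}$) ensures $\tilde u_n\to u$, $\tilde v_n\to v$ in $L^{p^*_s}(\rn)$, so by the dominated convergence theorem and H\"older's inequality $\int_{\rn}|\tilde u_n|^{p^*_s}\to\int_{\rn}|u|^{p^*_s}$, $\int_{\rn}|\tilde v_n|^{p^*_s}\to\int_{\rn}|v|^{p^*_s}$ and $\int_{\rn}|\tilde u_n|^{\alpha}|\tilde v_n|^{\beta}\to\int_{\rn}|u|^{\alpha}|v|^{\beta}$. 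Since $(u,v)\in\tilde{\mathcal N}$ forces the numerator and denominator of the expression above to be equal and strictly positive (the Sobolev inequality prevents degeneracy), the ratio converges to $1$, whence $t_n\to 1$. Consequently $(u_n,v_n)=t_n(\tilde u_n,\tilde v_n)\to(u,v)$ in $X$.

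The only delicate point, and the one I would treat most carefully, is the convergence of the mixed term $\int_{\rn}|\tilde u_n|^{\alpha}|\tilde v_n|^{\beta}dx$: H\"older's inequality with exponents $p^*_s/\alpha$ and $p^*_s/\beta$ bounds this by $\|\tilde u_n\|_{L^{p^*_s}}^{\alpha}\|\tilde v_n\|_{L^{p^*_s}}^{\beta}$, and writing $|\tilde u_n|^{\alpha}|\tilde v_n|^{\beta}-|u|^{\alpha}|v|^{\beta}$ as a telescoping difference together with the same H\"older inequality yields the convergence. Aside from this standard estimate the argument is routine, and no result beyond what is already developed in the excerpt (the definition of $\tilde{\mathcal N}$, the fractional Sobolev embedding, and density of smooth compactly supported functions in $\dot{W}^{s,p}(\rn)$) is needed.
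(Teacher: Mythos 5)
Your proposal is correct and follows essentially the same route as the paper: approximate by smooth compactly supported functions via density, rescale by the scalar $t_n$ determined by the homogeneity mismatch between the $p$-homogeneous norm term and the $p^*_s$-homogeneous integral term, and observe $t_n\to 1$ because $(u,v)\in\tilde{\mathcal N}$. The extra care you give to the convergence of the mixed term $\int|\tilde u_n|^{\alpha}|\tilde v_n|^{\beta}\,dx$ is a detail the paper leaves implicit, but the argument is the same.
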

	
	\begin{proof}
		By density, there is a sequence $(\tilde{u}_n,\tilde{v}_n) \in C^\infty_0(\rn) \times C^\infty_0(\rn)$ such that $(\tilde{u}_n,\tilde{v}_n) \to (u,v)$ in $X$ as $n\to\infty$. Let
		\[
		t_n = \left(\frac{\|\tilde u_n\|_{\dot{W}^{s,p}}^p + \|\tilde v_n\|_{\dot{W}^{s,p}}^p}{\displaystyle \int_{\rn} \left(|\tilde{u}_n|^{p^*_s} + |\tilde{v}_n|^{p^*_s} + \gamma\, |\tilde{u}_n|^{\alpha}\, |\tilde{v}_n|^{\beta}\right) dx}\right)^{1/(p_s^* - p)}
		\]
		and note that $t_n \to 1$ since $(u,v) \in \tilde{\mathcal N}$. Then $(u_n,v_n) = (t_n \tilde{u}_n,t_n \tilde{v}_n) \in \tilde{\mathcal N} \cap (C^\infty_0(\rn) \times C^\infty_0(\rn))$ and $(u_n,v_n) \to (u,v)$ in $X$.
	\end{proof}
	
	\begin{lemma} \label{Lemma 3.9}
		There is a minimizing sequence $(u_n,v_n) \in \tilde{\mathcal N} \cap (C^\infty_0(\rn) \times C^\infty_0(\rn))$ for $\tilde{A}$.
	\end{lemma}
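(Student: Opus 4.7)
The plan is a straightforward diagonal argument combining the definition of infimum with the density Lemma~\ref{Lemma 3.8}, so the only nontrivial ingredient is to check that the energy $\mathcal{J}$ is sequentially continuous on $X$.

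First, by the definition of $\tilde{A}$, pick any minimizing sequence $(u_n,v_n)\in\tilde{\mathcal N}$ with $\mathcal{J}(u_n,v_n)\to\tilde{A}$ as $n\to\infty$. Second, for each fixed $n$, apply Lemma~\ref{Lemma 3.8} to $(u_n,v_n)$ to obtain a sequence
$(\tilde u_{n,k},\tilde v_{n,k})\in \tilde{\mathcal N}\cap (C^\infty_0(\rn)\times C^\infty_0(\rn))$ with $(\tilde u_{n,k},\tilde v_{n,k})\to (u_n,v_n)$ in $X$ as $k\to\infty$.

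Third, I would check that $\mathcal{J}:X\to\re$ is continuous. The $\tfrac{1}{p}(\|\cdot\|^p+\|\cdot\|^p)$ piece is continuous by definition of the norm on $X$. For the remaining terms, the continuous Sobolev embedding $\dot W^{s,p}(\rn)\hookrightarrow L^{p^*_s}(\rn)$ coming from \eqref{best constant alpha+beta} shows that $\int_{\rn}|u|^{p^*_s}dx$ and $\int_{\rn}|v|^{p^*_s}dx$ depend continuously on $(u,v)\in X$, while H\"older's inequality
\[
\left|\int_{\rn}\bigl(|u|^\alpha|v|^\beta-|\tilde u|^\alpha|\tilde v|^\beta\bigr)dx\right|
\leq C\bigl(\|u-\tilde u\|_{L^{p^*_s}}+\|v-\tilde v\|_{L^{p^*_s}}\bigr)
\]
(using $\alpha+\beta=p^*_s$ and local uniform bounds on the sequences) gives continuity of the coupling term. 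Hence $\mathcal{J}(\tilde u_{n,k},\tilde v_{n,k})\to \mathcal{J}(u_n,v_n)$ as $k\to\infty$ for every fixed $n$.

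Finally, extract a diagonal subsequence: for each $n$ choose $k_n$ so large that
\[
\bigl|\mathcal{J}(\tilde u_{n,k_n},\tilde v_{n,k_n})-\mathcal{J}(u_n,v_n)\bigr|<\frac{1}{n}.
\]
Setting $(u^*_n,v^*_n):=(\tilde u_{n,k_n},\tilde v_{n,k_n})$, we have $(u^*_n,v^*_n)\in \tilde{\mathcal N}\cap (C^\infty_0(\rn)\times C^\infty_0(\rn))$, and by the triangle inequality $\mathcal{J}(u^*_n,v^*_n)\to \tilde{A}$. I do not expect any real obstacle here: all the machinery is already in place, with the continuity of $\mathcal{J}$ on $X$ being the only point that needs to be spelled out.
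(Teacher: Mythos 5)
Your proposal is correct and follows essentially the same route as the paper: take a minimizing sequence in $\tilde{\mathcal N}$, approximate each term via the density result of Lemma~\ref{Lemma 3.8}, and use the continuity of $\mathcal{J}$ on $X$ to pick a diagonal sequence within $1/n$ of the original energies. The only difference is that you spell out the continuity of $\mathcal{J}$ (via the Sobolev embedding and H\"older's inequality for the coupling term), which the paper simply asserts.
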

	
	\begin{proof}
		Let $(\tilde{u}_n,\tilde{v}_n) \in \tilde{\mathcal N}$ be a minimizing sequence for $\tilde{A}$, i.e., $\mathcal{J}(\tilde{u}_n,\tilde{v}_n) \to \tilde{A}$. By the continuity of $\mathcal{J}$ and Lemma \ref{Lemma 3.8}, there is a $(u_n,v_n) \in \tilde{\mathcal N} \cap (C^\infty_0(\rn) \times C^\infty_0(\rn))$ such that
		\[
		|\mathcal{J}(u_n,v_n) - \mathcal{J}(\tilde{u}_n,\tilde{v}_n)| < \frac{1}{n}.
		\]
		Then $\mathcal{J}(u_n,v_n) \to \tilde{A}$, so $(u_n,v_n) \in \tilde{\mathcal N} \cap (C^\infty_0(\rn) \times C^\infty_0(\rn))$ is a minimizing sequence for $\tilde{A}$.
	\end{proof}
	
	\medskip
	
	\noindent{\bf Proof of Theorem~\ref{in ball}}: First, we prove that
	\begin{equation}\label{A_R A are same}
	\tilde{A}(R)=\tilde{A}\,\text{ for every }R>0.
	\end{equation}
	Let $R_1<R_2$, then $\tilde{\mathcal N}(R_1)\subset\tilde{\mathcal N}(R_2)$ and hence by definition we have $\tilde{A}(R_2)\leq\tilde{A}(R_1).$ To prove reverse inequality, let $(u,v)\in\tilde{\mathcal N}(R_2)$ and define
	$$
	\left(u_1(x),v_1(x)\right):=\left(\frac{R_2}{R_1}\right)^{\frac{ N-sp}{p}}\left(u\left(\frac{R_2}{R_1}x\right),\,v\left(\frac{R_2}{R_1}x \right)\right).
	$$
	Clearly, $(u_1,v_1)\in\tilde{\mathcal N}(R_1)$. Therefore, we get
	$$
	\tilde{A}(R_1)\leq\mathcal{J}(u_1,v_1)=\mathcal{J}(u,v),\,\text{ for any }(u,v)\in\tilde{\mathcal N}(R_2),
	$$
	and this implies that $\tilde{A}(R_1)\leq\tilde{A}(R_2).$ So, we obtain $\tilde{A}(R_1)=\tilde{A}(R_2)$. Let $(u_n,v_n)\in\tilde{\mathcal N}$ be a minimizing sequence of $\tilde{A}$. In view of Lemma \ref{Lemma 3.9}, we may assume that $u_n,v_n\in W^{s,p}_{0}(B_{R_n}(0)) $ for some $R_n>0.$ Then, $(u_n,v_n)\in\tilde{\mathcal N}(R_n)$ and
	$$
	\tilde{A}=\lim\limits_{n\to\infty}\mathcal{J}(u_n,v_n)\geq\lim\limits_{n\to\infty}\tilde{A}(R_n)=\tilde{A}(R),
	$$
	and hence \eqref{A_R A are same} holds.
	
	Let  $(u,v)\in\tilde{\mathcal N}(R)$ be arbitrary, then there exists $t_\epsilon>0$ with $t_\epsilon\to1$ as $\epsilon\to0$ such that $\left(t_\epsilon^{1/p}u,\,t_\epsilon^{1/p}v\right)\in\tilde{\mathcal N}_\epsilon(R)$. Therefore, we have
	$$
	\limsup\limits_{\epsilon\to0}\tilde{A}_\epsilon\leq\limsup\limits_{\epsilon\to0}\mathcal{J}_\epsilon\left(t_\epsilon^{1/p}u,\,t_\epsilon^{1/p}v\right)=\mathcal{J}(u,v).
	$$
	Thus, using \eqref{A_R A are same} we obtain
	\begin{equation}\label{upper bdd of limsup A_epsilon}
	\limsup\limits_{\epsilon\to0}\tilde{A}_\epsilon(R)\leq\tilde{A}(R)=\tilde{A}.
	\end{equation}
	By Proposition \ref{radially symmetric solution}, let $(u_\epsilon,v_\epsilon)$ be a positive least energy solution of \eqref{epsilon system}, which is radially symmetric nonincreasing. Then by Lemma \ref{Lemma 3.5}, for any $\epsilon_0 \in (0,\min\{\alpha - 1,\beta - 1,(p_s^* - p)/2\})$, there exists a constant $C_{\epsilon_0} > 0$ such that
	\begin{equation} \label{A_epsilon lowerbdd indp of epsilon}
	\tilde{A}_\epsilon(R) = \frac{p_s^* - p - 2 \epsilon}{p\, (p_s^* - 2 \epsilon)} \left(\|u_\epsilon\|_{\dot{W}^{s,p}}^p + \|v_\epsilon\|_{\dot{W}^{s,p}}^p\right) \ge C_{\epsilon_0} \quad \forall \epsilon \in (0,\epsilon_0].
	\end{equation}
	Therefore, from \eqref{upper bdd of limsup A_epsilon} we get $u_\epsilon,\,v_\epsilon\in W^{s,p}_{0}(B_{R}(0)$ are uniformly bounded. Thus, by reflexivity upto a subsequence $u_\epsilon\to u_0$ and $v_\epsilon\to v_0$ weakly in $W^{s,p}_{0}(B_{R}(0))$ as $\epsilon\to0.$ Since \eqref{epsilon system} is a subcritical system in bounded domain, passing the limit $\epsilon\to 0$, it follows that $(u_0,v_0)$ is a solution of the following system
	\begin{equation*}
	\begin{cases}
	(-\Delta_p)^s u =|u|^{p^*_s-2}u+ \frac{\alpha\gamma}{p_s^*}|u|^{\alpha-2}u|v|^{\beta}\;\;\text{in}\;B_R(0),\vspace{.2cm}
	\\
	(-\Delta_p)^s v =|v|^{p^*_s-2}v+ \frac{\beta\gamma}{p_s^*}|v|^{\beta-2}v|u|^{\alpha}\;\;\text{in}\;B_R(0),
	\\
	u,\;v\in W^{s,p}_{0}(B_R(0)).
	\end{cases}
	\end{equation*}
	Also note that $u_0,\,v_0$ are nonnegative and from \eqref{A_epsilon lowerbdd indp of epsilon} we see that $(u_0,v_0)\neq(0,0)$.  We may now assume that $u_0\not\equiv0$. Therefore, by strong maxmimum principle (see, \cite{DeQu}) we obtain $u_0>0$ in $B_R(0).$ Further, we claim, $v_0\not\equiv 0$. If $v_0\equiv 0$ then substituting $(u_0, v_0)$ in the above system of equation leads $u_0$ is a positive solution to  $(-\Delta_p)^s u =|u|^{p^*_s-2}u$ in $B_R(0)$. Since $u_0\in W^{s,p}_0(B_R(0))$, it follows
	\begin{equation}\label{u0-1}\mathcal{J}(u_0, 0)=\frac{1}{p}\|u_0\|_{\dot{W}^{s,p}}^p-\frac{1}{p^*_s}\int_{\rn}u_0^{p^*_s}dx=\frac{1}{p}\|u_0\|_{\dot{W}^{s,p}}^p-\frac{1}{p^*_s}\int_{B_R(0)}u_0^{p^*_s}dx=\frac{s}{N}\|u_0\|_{\dot{W}^{s,p}}^p.
	\end{equation}
	We also observe that $(u_0,0)$, $(0,u_0)\in\mathcal{\tilde N}$. Therefore, using \eqref{tl-A-U}, we have
	\begin{equation}\label{tlA-u0}
	\tilde A<\min\{\inf_{(u,0)\in\mathcal{\tilde N}}\mathcal{J}(u,0), \inf_{(0,v)\in\mathcal{\tilde N}}\mathcal{J}(0,v) \}\leq \min\{\mathcal{J}(u_0,0), \mathcal{J}(0,u_0)  \}=\mathcal{J}(u_0,0).
	\end{equation}
	Combining \eqref{u0-1} and \eqref{tlA-u0} together yields
	\begin{equation}\label{tlA-u01}
	\tilde A<\frac{s}{N}\|u_0\|_{\dot{W}^{s,p}}^p.
	\end{equation}
	Further, by \eqref{upper bdd of limsup A_epsilon} and the fact that $(u_\epsilon, v_\epsilon)$ is a positive least energy solution of \eqref{epsilon system}, it follows
	\begin{multline*}
	\tilde A \geq \limsup_{\epsilon\to 0}\tilde A_\epsilon(R)
	=\limsup_{\epsilon\to 0} \mathcal{J}_\epsilon(u_\epsilon, v_\epsilon)\\
	=\limsup_{\epsilon\to 0}\left[\frac{1}{p}\|u_\epsilon\|_{\dot{W}^{s,p}}^p+\|v_\epsilon\|_{\dot{W}^{s,p}}^p)-\frac{1}{p^*_{s}-2\epsilon}\int_{B_R(0)}(u_\epsilon^{p^*_{s}-2\epsilon}+v_\epsilon^{p^*_{s}-2\epsilon}+\gamma u_{\epsilon}^{\alpha-\epsilon}v_\epsilon^{\beta-\epsilon})dx \right]\\
	=\limsup_{\epsilon\to 0}\left(\frac{1}{p}-\frac{1}{p^*_{s}-2\epsilon}\right)\left(\|u_\epsilon\|_{\dot{W}^{s,p}}^p+\|v_\epsilon\|_{\dot{W}^{s,p}}^p\right)
	\geq \frac{s}{N}(\|u_0\|_{\dot{W}^{s,p}}^p+\|v_0\|_{\dot{W}^{s,p}}^p)\\
	=\frac{s}{N}\|u_0\|_{\dot{W}^{s,p}}^p
	>\tilde A \quad \text{(by \eqref{tlA-u01} )},
	\end{multline*}
	which is a contradiction. Hence $v_0\not\equiv 0$ and again by strong maximum principle we obtain $v_0>0$ in $B_R(0).$ Moreover, as $(u_\epsilon, v_\epsilon)$ is radial and $u_\epsilon\to u$ a.e. and $v_\epsilon \to v$ a.e. (up to a subsequence), we also have $u_0, v_0$ are radial functions.  Hence $(u_0, v_0)$ is a positive radial solution to \eqref{system_ball}.
	\hfill$\square$

	\medskip

	\noindent{\bf Proof of Theorem~\ref{gamma near 0}}: To prove the existence of $(k(\gamma),\ell(\gamma))$ for small $\gamma>0$, recalling \eqref{algebraic eq3},  we denote $F_i(k,\ell,\gamma)$ instead of $F_i(k,\ell)$, $i=1,2$ in this case. Let $k(0)=1=\ell(0)$, then $F_i(k(0),\ell(0),0)=0$, $i=1,2.$
	Clearly, we have
	\begin{equation*}
	\frac{\partial F_1}{\partial k}(k(0),\ell(0),0)=\frac{\partial F_2}{\partial\ell}(k(0),\ell(0),0)=\frac{p^*_s-p}{p}>0
	\end{equation*}
	and
	$$
	\frac{\partial F_1}{\partial\ell}(k(0),\ell(0),0)=\frac{\partial F_2}{\partial k}(k(0),\ell(0),0)=0.
	$$
	Therefore, the jacobian determinant is $J_{F}(k(0),\ell(0))=\frac{(p^*_s-p)^2}{p^2}>0$, where $F:=(F_1,F_2).$
	Therefore, by the implicit function theorem,  $k(\gamma),\,\ell(\gamma)$ are well defined functions and of class $C^1$ in $(-\gamma_2,\gamma_2)$ for some $\gamma_2>0$ and $F_i(k,\ell,\gamma)=0$ for $\gamma\in(-\gamma_2,\gamma_2)$. Then $\left(k(\gamma)^{1/p}U,\ell(\gamma)^{1/p}U\right)$ is a positive solution of \eqref{MAT2}. Since $\lim\limits_{\gamma\to0}(k(\gamma)+\ell(\gamma))=2$. Thus there exists $\gamma_1\in(0,\gamma_2]$ such that $k(\gamma)+\ell(\gamma)>1$ for all $\gamma\in(0,\gamma_1)$.
	Therefore, by  \eqref{tl-A-U} we get
	$$
	\mathcal{J}\left(k(\gamma)^{1/p}U,\ell(\gamma)^{1/p}U\right)=\frac{s}{N}(k(\gamma)+\ell(\gamma))\mathcal{S}^{\frac{N}{sp}}>\frac{s}{N}\,\mathcal{S}^{\frac{N}{sp}}=\tilde{A} .
	$$
	This completes the proof.
	\hfill$\square$

	\vspace{10mm}
	
	{\bf Acknowledgement:} The research of M. Bhakta is partially supported by the SERB WEA grant (WEA/2020/000005) and DST Swarnajaynti fellowship (SB/SJF/2021-22/09). K. Perera is partially supported by the Simons Foundation grant 962241. F. Sk is partially supported by the SERB grant WEA/2020/000005.
	
	\medskip
	
	{\bf Competing interests.} The authors have no competing interests to declare that are relevant to the content of this article.
	
	\vspace{5mm}

\end{document}